\documentclass[a4paper,11pt]{amsart}
\usepackage[utf8]{inputenc}
\usepackage{csquotes}
\usepackage{amsmath, amssymb, amsthm, xcolor,enumerate, enumitem}
\usepackage{tikz-cd}
\usepackage[all]{xy}
\usepackage{microtype}
\usepackage[colorlinks=true, citecolor=blue, linkcolor=blue, bookmarks=true]{hyperref}
\usepackage{bbm}
\usepackage{graphicx}
\usepackage{adjustbox}
\usepackage{comment}
\usepackage[capitalize]{cleveref}
\usepackage{stmaryrd}
\usepackage{mathtools}

\allowdisplaybreaks
\newtheorem{theorem}{Theorem}[section]
\newtheorem{lemma}[theorem]{Lemma}
\newtheorem{corollary}[theorem]{Corollary}
\newtheorem{proposition}[theorem]{Proposition}
\newtheorem*{theorem*}{Theorem}
\numberwithin{equation}{section}

\theoremstyle{definition}
\newtheorem{definition}[theorem]{Definition}
\newtheorem{notation}[theorem]{Notation}
\newtheorem{remark}[theorem]{Remark}

\numberwithin{equation}{section}

\newcommand{\eins}[0]{\mathbf{1}}
\newcommand{\ti}{\tilde}

\newcommand{\wv}{\widetilde{\mathbbm{v}}}
\newcommand{\wf}{\widetilde{\phi}}
\newcommand{\cF}{\mathcal{F}}
\newcommand{\cB}{\mathcal{B}}

\newcommand{\cG}{\mathcal{G}}
\newcommand{\cH}{\mathcal{H}}
\newcommand{\cO}{\mathcal{O}}
\newcommand{\cU}{\mathcal{U}}
\newcommand{\cV}{\mathcal{V}}
\newcommand{\cR}{\mathcal{R}}
\newcommand{\cM}{\mathcal{M}}
\newcommand{\cC}{\mathcal{C}}

\newcommand{\Z}{\mathcal{Z}}

\newcommand{\bK}{\mathbb{K}}

\newcommand{\cZ}{\mathcal{Z}}
\newcommand{\bT}{\mathbb{T}}
\newcommand{\bC}{\mathbb{C}}
\newcommand{\bN}{\mathbb{N}}

\newcommand{\bZ}{\mathbb{Z}}
\newcommand{\bu}{\mathbbm{u}}
\newcommand{\bs}{\mathbbm{s}}
\newcommand{\fu}{\mathfrak{u}}
\newcommand{\fv}{\mathfrak{v}}
\newcommand{\bv}{\mathbbm{v}}
\newcommand{\bw}{\mathbbm{w}}

\newcommand{\Hilb}{\mathrm{Hilb}}

\newcommand{\ob}{\mathrm{ob}}
\newcommand{\Ad}{\mathrm{Ad}}
\newcommand{\Aut}{\mathrm{Aut}}
\newcommand{\Inn}{\mathrm{Inn}}
\newcommand{\Out}{\mathrm{Out}}

\newcommand{\im}{\operatorname{im}}
\newcommand{\id}{\operatorname{id}}

\newcommand{\Hom}{\operatorname{Hom}}

\newcommand{\coker}{\operatorname{coker}}

\newcommand{\cstar}[0]{C$^*$}

\title[$G$-kernels with the Rokhlin property]{Classification of anomalous actions of finite groups with the Rokhlin property}

\author{Sergio Girón Pacheco}
\author{Gábor Szabó}
\address{\hskip-\parindent Department of Mathematics, KU Leuven, Celestijnenlaan 200B, box 2400, B-3001 Leuven, Belgium.}
\email{sergio.gironpacheco@kuleuven.be}
\email{gabor.szabo@kuleuven.be}

\begin{document}

\begin{abstract}
Given a finite group $G$, we develop a generalization of the fundamental classification of Rokhlin $G$-actions on \cstar-algebras to the setting of anomalous $G$-actions with the Rokhlin property.
For Rokhlin $G$-kernels on \cstar-algebras covered by the classification program, this implies that the induced $G$-action on some known invariants determines the conjugacy class.
Extending a result of Izumi, we show that $G$-kernels with the Rokhlin property on Kirchberg algebras are classified by their anomaly and the induced module structure on the K-theory groups.
We explore K-theoretic obstructions for the existence of Rokhlin $G$-kernels on general separable C$^*$-algebras and use these to characterise which $G$-module structures arise as K-groups of Kirchberg algebras admitting a Rokhlin $G$-kernel with a given lifting obstruction.
\end{abstract}

\maketitle

\setcounter{tocdepth}{1}
\tableofcontents

\section*{Introduction}
\renewcommand{\thetheorem}{\Alph{theorem}}
\setcounter{theorem}{0}

The classification of symmetries of operator algebras has seen significant progress in recent years.
The first major results were obtained in the context of group actions on the hyperfinite II$_1$ factor $\cR$.
Connes, Jones, and Ocneanu proved that every countable discrete amenable group admits a unique pointwise outer action on $\cR$ \cite{CO75,CO77,JO80,OC85}.
These results were subsequently extended to a complete classification of group actions on injective factors by Sutherland, Katayama, and Takesaki \cite{KATASU98}.

In contrast, partly due to the intricate interplay between symmetries and K-theoretic invariants, a comparably satisfactory classification theory for group actions on simple amenable C$^*$-algebras has developed at much slower pace.
Over the past three decades, a variety of classification results for such actions, making use of K-theoretic, KK-theoretic, and homotopy-theoretic invariants, have been established \cite{IZU04I,IZU04II,IZMA21,IZ10,GASZ24}.

A major milestone in the classification of group actions on C$^*$-algebras was achieved by Izumi through his classification of finite group actions with the Rokhlin property \cite{IZU04I,IZU04II}.
Subsequently these ideas could even be transferred to actions of compact (quantum) groups on \cstar-algebras \cite{AranoKubota17, Gardella22, Gardella25, BarlakSzaboVoigt17}.
Building on earlier work of Hermann and Jones \cite{HEJO82,HEJO83}, Izumi introduced the Rokhlin property and obtained a classification of finite group actions with this property on unital \cstar-algebras based on remarkably elementary methods, which were later refined and extended to the nonunital case \cite{GASA16}.
Izumi applied his classification machinery to Kirchberg algebras, where the conjugacy class of a Rokhlin action is entirely encoded in the induced module structure on the K-theory groups.
Even though the Rokhlin property imposes strong constraints for the possible module structures, Izumi also obtained a range theorem, giving a complete group cohomological characterization of the $G$-module structure that are induced from Rokhlin $G$-actions on the K-groups of a given Kirchberg algebra.

Already at the time of the Connes--Jones--Ocneanu classification theorems, a great amount of interest was attracted by (the classification of) symmetries beyond just group actions.
In these papers the authors considered \emph{$G$-kernels}.
A $G$-kernel on a C$^*$-algebra $A$ is a group homomorphism
\[
\alpha:G\rightarrow \Out(A)=\Aut(A)/\Inn(A).
\]
Any $G$-kernel $\alpha$ gives rise to a class $\omega\in H^3(G,Z\cU\cM(A))$ called the \emph{lifting obstruction} of $\alpha$, which is precisely the obstruction to lifting $\alpha$ to a coycle action on $A$.
Connes, Jones and Ocneanu show that for any countable amenable group, faithful $G$-kernels on $\cR$ are classified up to conjugacy by their lifting invariant (see also \cite{KATA03,KATA07,KATA09} for the case of other injective factors).
Moreover, any class in $H^3(G,\bT)$ arises as a lifting obstruction of a $G$-kernel on $\cR$ and hence on any injective factor \cite{JON79}. 

In contrast, the classification theory for $G$-kernels on C$^*$-algebra remains in its infancy.
Up to now there are only sporadic results \cite{IZ23, GP25, CHHPJO24} with strong assumptions on the structure of the underlying C$^*$-algebra.
In this paper we initiate the general classification of finite group $G$-kernels with the Rokhlin property, and apply it on simple amenable C$^*$-algebras.
To demonstrate the latter, we obtain the full-fledged analogue of a result of Izumi that gave a full description of Rokhlin $G$-actions on Kirchberg algebras; see Corollary \ref{cor:Gkerclass} in the main body of the paper.

\begin{theorem} \label{introthrm:Gkerclass}
Let $A$ and $B$ be (unital) Kirchberg algebras satisfying the UCT.
Let $\alpha:G\rightarrow \Out(A)$ and $\beta:G\rightarrow \Out(B)$ be two $G$-kernels with the Rokhlin property.
Then $\alpha$ and $\beta$ are conjugate if and only if $\alpha$ and $\beta$ have the same lifting obstructions in $H^3(G,\bT)$ and the $G$-actions $K_*(\alpha): G\curvearrowright K_*(A)$ and $K_*(\beta): G\curvearrowright K_*(B)$ are (unitally) conjugate.
\end{theorem}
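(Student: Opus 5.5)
The plan is to reduce the statement to an abstract classification theorem for Rokhlin $G$-kernels, in the spirit of Izumi's and Gardella--Santiago's treatment of Rokhlin actions. Concretely, I will first prove: if $\alpha,\beta$ are Rokhlin $G$-kernels with the same lifting obstruction $\omega\in H^3(G,\bT)$, and there is an isomorphism $\theta:A\to B$ with $\theta\alpha_g\theta^{-1}$ approximately unitarily equivalent to $\beta_g$ for all $g$, then $\alpha$ and $\beta$ are conjugate. For Kirchberg algebras satisfying the UCT, the Kirchberg--Phillips theorem together with the UCT turns the K-theoretic hypothesis into such a $\theta$. A (unital) conjugacy of $K_*(\alpha)$ and $K_*(\beta)$ is realised by a (unital) isomorphism $\theta:A\to B$. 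Then $\theta\alpha_g\theta^{-1}$ and $\beta_g$ induce the same map on $K_*$. Since $\Ext$-terms in the UCT may cause a discrepancy in $KK$, I expect to need an extra step here. Either I show that Rokhlin kernels force $K_*$-agreement to upgrade to $KK$-agreement (for instance because the Rokhlin property makes $K_*(A)$ cohomologically trivial-like modules, as in Izumi's work), or I modify $\theta$ by an automorphism acting trivially on $K_*$ so that the $KK$-classes agree. For Kirchberg algebras, $KK$-equality of automorphisms gives asymptotic, hence approximate, unitary equivalence. The "only if" direction is immediate, since conjugacy preserves both the lifting obstruction and the induced module structure.

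For the abstract theorem, I will lift $\alpha$ and $\beta$ to $\omega$-anomalous (twisted) actions, with $\alpha_g\alpha_h=\Ad(u_{g,h})\alpha_{gh}$ and the 3-cocycle $\omega$ appearing in the associativity of the $u_{g,h}$. After normalising, both carry the same cocycle $\omega$ rather than merely cohomologous ones. The key tool is a Rokhlin-type averaging lemma. Using Rokhlin projections $e_g$ in the central sequence algebra $F(A)$ with $\alpha_g(e_h)=e_{gh}$, I form $\sum_g e_g\,(\cdots)$-type averages. These should show that any "approximate cocycle" perturbation of an anomalous action with the Rokhlin property is an approximate coboundary. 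This yields an approximate intertwining datum: given $\theta\alpha_g\theta^{-1}\approx_u\beta_g$, I will produce unitaries $w$ with $\Ad(w)\theta\alpha_g\theta^{-1}$ close to $\beta_g$ modulo compatible inner adjustments on finite sets. The compatibility is with the $u_{g,h}$ as well, which is where equality of $\omega$ enters. Concretely, the error 2-cochain valued in $\bT$ becomes a 2-cocycle once the 3-cocycles agree, and I will kill it by a character-twisting or Rokhlin averaging argument.

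Finally, an Elliott-type approximate intertwining argument, run via Evans--Kishimoto or directly as in Izumi's one-sided intertwining, will build automorphisms converging to a conjugacy $\Theta$ with $\Theta\alpha_g\Theta^{-1}=\beta_g$ in $\Out(B)$. Some care is needed to keep the result a conjugacy of kernels rather than of cocycle actions, but conjugacy of kernels is the weaker requirement, so it suffices.

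The main obstacle is the cohomology-vanishing step for anomalous actions. In the genuine-action case, Izumi's averaging exactly trivialises 1-cocycles. In the anomalous case the averaged unitaries only satisfy the cocycle identity up to the scalar 3-cocycle $\omega$. One must check that the Rokhlin average produces an honest 1-cochain whose coboundary absorbs the $\bT$-valued discrepancy. My first attempt will be the formula $w=\sum_h e_h\, u_{h,h^{-1}g}^{*}\cdots$, verifying the relations in $F(A)$ and then reindexing to obtain genuine unitaries in $A$.
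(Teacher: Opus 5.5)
Your proposal has a genuine gap in the K-theoretic reduction, which is a substantial part of the paper's proof rather than an afterthought.

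\textbf{The first option fails in general.} Fix an isomorphism $\theta$ realising the $K_*$-conjugacy. Let $c_g$ and $b_g$ be the classes of $\theta\alpha_g\theta^{-1}$ and $\beta_g$ in $KK(B,B)$ (equivalently in $\Hom_\Lambda(\underline{K}(B),\underline{K}(B))$); they may differ in the $\mathrm{Ext}$-part. If you replace $\theta$ by $\sigma\theta$ with $K_*(\sigma)=\id$ and $KK(\sigma)=1+x$, then $c_g$ changes by the commutator $xc_g-c_gx$, since the $\mathrm{Ext}$-part is a square-zero ideal. This commutator is typically nonzero. So agreement depends on $\theta$ and nothing forces it; the content lies in choosing $\theta$.

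\textbf{The second option has the right shape but is unsupported.} To make the resulting $1$-cocycle a coboundary, you need $K_*(A)$ to be \emph{completely} cohomologically trivial for a Rokhlin $G$-kernel, i.e.\ also after passing to $K_*(A)_n$ and ${}_nK_*(A)$. In the paper, the relevant cocycles are $\Hom({}_{p^m}K_{1-i}(A),K_i(A)_{p^m})$-valued and measure the non-equivariance of K\"unneth splittings. You cannot simply cite Izumi here: his result concerns genuine Rokhlin actions on unital simple algebras, and the paper notes that his argument does not transfer to $G$-kernels.

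The paper proves this directly in Theorem~\ref{thm:generalcohomologyvanishing}. It reduces by suspension and stabilisation (via Lemma~\ref{lem:Rokhlintensor}) to $\hat{H}^0$ and $\hat{H}^{-1}$ of ${}_mK_1(A)$ with $A$ stable. It then builds, from Rokhlin elements $f$, unitaries close to $\alpha_g(f)(\alpha_h(u)-\eins)+\eins$. Their classes show that the norm map is onto the invariants and that its kernel lies in the augmentation submodule, and the homotopies are tracked so the witnesses stay $m$-torsion. Only then do Lemma~\ref{lem:equivariantsplitting} (equivariant, Bockstein-compatible K\"unneth splittings) and Lemma~\ref{lem:primesimplification} upgrade a $K_*$-conjugacy to an equivariant $\underline{K}$-isomorphism. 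Kirchberg--Phillips then realises this by some $\theta$ with $\theta\alpha_g\theta^{-1}\approx_{\mathrm{pu}}\beta_g$ (Theorem~\ref{thm:anomclassKtheory}).

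Your route to the abstract theorem is Izumi-style approximate vanishing of $1$-cohomology followed by an Evans--Kishimoto or one-sided intertwining. This differs from the paper's route, which proves existence (Theorem~\ref{thm: existence}) and uniqueness (Corollary~\ref{cor: uniqueness}) of $G$-coherent morphisms and feeds them into the two-sided intertwining of Theorem~\ref{thm: Elliott intertwining}, obtaining even cocycle conjugacy (Theorem~\ref{thm:classicalint}). Your route is plausible in outline, but it is only a sketch, and two points need more than you indicate. First, in the stable case the $\fu_{g,h}$ are genuinely multiplier-valued and the Rokhlin elements are only positive contractions. Producing honest unitaries in $\cU(\eins+B)$ from your averaged elements therefore needs an input like Lemma~\ref{lem:sequencesettingLemma}. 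Second, the $\bT$-valued $2$-cocycle you mention can represent a nonzero class in $H^2(G,\bT)$. Character twisting cannot remove it, so you must use Rokhlin averaging there as well.
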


Theorem \ref{introthrm:Gkerclass} significantly generalises earlier work of the first-named author \cite{GP25}, where a preliminary classification result was obtained under the assumption that $A$ and $B$ absorb the UHF-algebra $M_{|G|^\infty}$ tensorially.
We note that we can in fact obtain Theorem \ref{introthrm:Gkerclass} in the setting of finite classifiable \cstar-algebras of real rank zero as well. 

The study of $G$-kernels on C$^*$-algebras has gained more attention in recent years \cite{JO21,EVGI23,THESIS,GP25,IZ23, ARKIKU23, KI24,GPIZPE25}.
Unlike the case of injective factors, it was shown by Evington and the first-named author \cite{EVGI23} that there are K-theoretic restrictions for the values of the lifting obstruction for $G$-kernels on C$^*$-algebras.
For example, no non-trivial lifting obstruction can occur on the Jiang--Su algebra $\cZ$, and for UHF-algebras the order of the lifting obstruction is constrained by K-theory.
Further restrictions, also applicable to Kirchberg algebras, were established by Izumi \cite{IZ23}. 

In the present paper we show that, in the presence of the Rokhlin property, there are no additional restrictions on the lifting obstruction of a $G$-kernel on a Kirchberg algebra, beyond those already implied by the existence of a Rokhlin $G$-action known by Izumi's work.
For ease of exposition, we state this result below in the setting of stable Kirchberg algebras; see Theorem \ref{thm:rangeofinvstable} for the more rigorous and notation-heavy statement.

\begin{theorem} \label{introthm:rangeofinvstable}
Let $G$ be a finite group, $[\omega]\in H^3(G,\bT)$ and $M_0, M_1$ a pair of countable completely cohomologically trivial $G$-modules.
Then there exists a Rokhlin $G$-kernel on a stable UCT Kirchberg algebra $A$ with lifting invariant $[\omega]$ and such that $K_*(A)\cong M_*$ as $G$-modules.    
\end{theorem}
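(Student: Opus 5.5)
Given $[\omega]\in H^3(G,\bT)$ and countable completely cohomologically trivial $G$-modules $M_0,M_1$, we will build the desired $G$-kernel as a tensor product. One factor realises the modules, the other realises the anomaly. The Rokhlin property will be absorbed into the first factor.

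First, we invoke Izumi's range theorem for Rokhlin actions on Kirchberg algebras. For completely cohomologically trivial countable $G$-modules $M_0,M_1$, it provides a Rokhlin action $\gamma:G\curvearrowright D$ on a stable UCT Kirchberg algebra $D$ with $K_*(D)\cong M_*$ as $G$-modules. If the non-unital/stable version is not directly available, we would take Izumi's unital version for suitable modules and stabilise, tensoring with $\id_{\bK}$. Stabilisation keeps the Rokhlin property and the $K$-theory modules.

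Second, we realise the anomaly on $\mathcal{O}_\infty$. We need a $G$-kernel $\theta:G\to\Out(\mathcal{O}_\infty)$, or on $\mathcal{O}_\infty\otimes\bK$, with lifting obstruction $[\omega]$. A natural route is to take a Jones-type anomalous action: a cocycle $\omega$-twisted unitary representation on $\ell^2(G)\otimes\ell^2(\bN)$, with $\Ad$ of it leaving invariant a copy of $\mathcal{O}_\infty$ built functorially, e.g.\ via a Cuntz--Pimsner/Fock construction over a Hilbert space carrying a projective $G$-structure with $3$-cocycle. Alternatively, we could use the existence results for $G$-kernels with prescribed anomaly on $\mathcal{O}_\infty$ from \cite{IZ23} or \cite{EVGI23}. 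Since $K_*(\mathcal{O}_\infty)=(\bZ,0)$ with trivial action, there is no $K$-theoretic obstruction here.

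Third, we set $A=D\otimes\mathcal{O}_\infty$, which is $\cong D$ by Kirchberg--Phillips, and $\alpha_g=\gamma_g\otimes\theta_g$ in $\Out(A)$. Lifting obstructions multiply under tensor products, so $\alpha$ has obstruction $[\omega]$, since $\gamma$ has trivial obstruction. By the Künneth theorem, $K_*(A)\cong K_*(D)\otimes\bZ\cong M_*$ equivariantly, because $\theta$ acts trivially on $K_*(\mathcal{O}_\infty)$.

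The Rokhlin property passes to $\alpha$ because the Rokhlin projections for $\gamma$ in $D_\infty\cap D'$ tensored with $1$ still work. The only check is compatibility with the chosen unitary lifts of $\theta$: conjugating by a unitary in the second tensor factor does not move elements of the form $e\otimes 1$.

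The main obstacle is the second step: exhibiting a $G$-kernel on $\mathcal{O}_\infty$ (or $\mathcal{O}_\infty\otimes\bK$) with arbitrary prescribed $[\omega]\in H^3(G,\bT)$. It must also be compatible with the paper's definition of the lifting invariant, with values in $Z\cU\cM(A)=\bT$ for simple $A$. I would first try an explicit crossed-product/Cuntz–Pimsner model, using that every $3$-cocycle is realised by a $G$-kernel on $\cR$ \cite{JON79}, and transfer that construction to a purely infinite nuclear setting.
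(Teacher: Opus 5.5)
There is a genuine gap, and it is in your second step. For $[\omega]\neq 0$ there is \emph{no} $G$-kernel $\theta$ on $\cO_\infty$ with $\ob(\theta)=[\omega]$. Consequently the tensor product in your third step always has trivial lifting obstruction.

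This is one of the $K$-theoretic restrictions the introduction alludes to (cf.\ \cite{EVGI23,IZ23}), and it can be seen directly. The group $\cU(\cO_\infty)$ is connected, and $\pi_1(\cU(\cO_\infty))\cong K_0(\cO_\infty)=\bZ$ is generated by the scalar loop $t\mapsto e^{2\pi i t}\eins$, because $[\eins]$ generates $K_0(\cO_\infty)$. Let $(\theta,\fu)$ be a lift of the $G$-kernel to an anomalous action, and pass to the universal covering group $\widetilde{\cU}$. There the preimage $E$ of $\bT\eins$ is a connected central subgroup isomorphic to $\mathbb{R}$. The unique continuous lifts $\widetilde{\theta}_g$ fix $E$ pointwise and satisfy $\widetilde{\theta}_g\widetilde{\theta}_h=\Ad(\widetilde{\fu}_{g,h})\widetilde{\theta}_{gh}$ for any lifts $\widetilde{\fu}_{g,h}$ of $\fu_{g,h}$. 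The usual computation then shows that $\widetilde{\theta}_g(\widetilde{\fu}_{h,k})\widetilde{\fu}_{g,hk}\widetilde{\fu}_{gh,k}^{-1}\widetilde{\fu}_{g,h}^{-1}$ is an $\mathbb{R}$-valued $3$-cocycle lifting $\omega$. Since $G$ is finite, $H^3(G,\mathbb{R})=0$, which forces $[\omega]=0$.

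Passing to $\cO_\infty\otimes\bK$ does not help. If $\theta$ acts trivially on $K_0$, as your Künneth step requires, then cutting down to the corner of $p=\eins\otimes e_{11}$, exactly as in the proof of Theorem~\ref{thm:rangeofinvstable}\ref{item:twothmrange}, yields a $G$-kernel on $\cO_\infty$ with the same obstruction. Jones' realisation of every class on $\cR$ therefore has no counterpart here.

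The tensor-product strategy also cannot be rescued by changing the anomaly carrier. Carriers that do admit the anomaly change the $K$-theory: $\cO_2$ sends it to $0$, $M_{|G|^\infty}$ sends it to $M_*\otimes\bZ[1/|G|]$, and the model on $D_G$ from Lemma~\ref{lem:Gkerconstruction} sends it to an induced module.

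The missing idea is that the anomaly must be built into Rokhlin building blocks whose $K$-theory is cohomologically trivial, and the modules must then be assembled by a limit process. The paper proceeds as follows.
\begin{enumerate}
\item It takes the $\lambda$-anomalous Rokhlin model on $D_G$, with $\fu_{g,h}(k)=\lambda(k^{-1},g,h)$ at each finite stage.
\item It tensors this model with trivial actions on $A^{(n)}$ to realise the induced modules $\bZ[G]\otimes M_i^{(n)}$.
\item It writes $M_i$ as an inductive limit of these modules using Izumi's structure theorem (Theorem~\ref{theorem:inducedmodules}).
\item It lifts the connecting maps to $\underline{K}$ via Lemma~\ref{lem:equivariantsplitting}, and then to $*$-homomorphisms via Theorem~\ref{thm:classificationKirchberg}.
\item It upgrades these $*$-homomorphisms, which are only equivariant up to proper approximate unitary equivalence, to $G$-coherent morphisms via Theorem~\ref{thm: existence}, and takes the inductive limit.
\end{enumerate}
That existence theorem is the essential ingredient your proposal bypasses. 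Without it there is no way to glue $\omega$-anomalous building blocks along prescribed $K$-theoretic connecting maps.
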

Complete cohomological triviality is a strong cohomological constraint on a $G$-module introduced by Izumi in \cite{IZU04II} as a tool to characterise the K-theoretic $G$-modules that are induced by Rokhlin $G$-actions.
An important structure theorem in this endevour is \cite[Theorem 3.3]{IZU04II}, where it is shown that if $\alpha$ is a Rokhlin $G$-action on a unital, simple, separable C$^*$-algebra $A$, then the $G$-module structures induced by $K_*(\alpha)$ are completely cohomologically trivial. 
A key ingredient towards Theorem \ref{introthm:rangeofinvstable} is the analogous result in the case of $G$-kernels.
Some of the conceptual shortcuts appealed to in \cite{IZU04II} do however not apply in the case of $G$-kernels.
Consequently, our proof requires a more direct methodological approach than that of \cite{IZU04II}.
In the process, this allows us to explore the aforentioned K-theoretic obstructions for Rokhlin $G$-kernels on arbitrary separable \cstar-algebras; see Theorem \ref{thm:generalcohomologyvanishing}.

To achieve Theorem \ref{introthrm:Gkerclass}, we actually generalize the classification theory for Rokhlin $G$-actions to the setting of anomalous $G$-actions with the Rokhlin property, based on a similar existence and uniqueness strategy as suggested in \cite{SZ21} by the second-named author for genuine group actions.
This should also be viewed as the anomalous action analogue of the methods by Gardella--Santiago \cite{GASA16}.
We apply this strategy to classify lifts of $G$-kernels $(\alpha,\fu)$, where $\alpha:G\rightarrow \Aut(A)$ and $\fu:G\times G\rightarrow \cU\cM(A)$ are maps such that
\[
\alpha_g \alpha_h=\Ad(\fu_{g,h})\alpha_{gh}.
\]
Such a pair is often called an \emph{anomalous action} if the resulting 3-cocycle $[(g,h,k)\mapsto \alpha_g(\fu_{h,k})\fu_{g,hk}\fu_{gh,k}^*\fu_{g,h}^*\in Z\cU\cM(A)]$ is scalar-valued.\footnote{Anomalous actions and lifts of $G$-kernels only coincide when $A$ has trivial centre, which we shall assume for the rest of the discussion. We direct the interested reader to Section \ref{subsec:anomaction} for details.}
Due to the fact that the unitary cocycles $\fu$ arising from $G$-kernels with non-trivial lifting invariants are necessarily multiplier-valued, the obvious generalisation of cocycle morphisms between anomalous actions (Definition \ref{def:cocyclemorphism}) is not suitable for our purposes.
Instead, we consider a more general type of morphism between anomalous actions (cf.\ \cite[Definition A]{GINE23} and Definition \ref{defn:weakcocyclemorphism}), which we expect to become relevant beyond the present work.

\begin{definition} \label{introdef:Gcoherent}
Let $(A,\alpha,\fu)$ and $(B,\beta,\fv)$ be anomalous actions of a group $G$.
A \emph{$G$-coherent morphism} from $(A,\alpha,\fu)$ to $(B,\beta,\fv)$ is a tuple of linear maps $\phi=(\phi_g : A\rightarrow B)_{g\in G}$ such that 
\begin{enumerate}[leftmargin=*,label=\textup{(\roman*)}]
    \item $\phi_g(a)^*\phi_g(b)=\phi_1(a^*b)$,
    \item $\phi_{gh}(\fu_{g,h}^*\alpha_g(a)b)=\fv_{g,h}^*\beta_g(\phi_h(a))\phi_g(b)$
\end{enumerate}
for all $g,h\in G$ and $a,b\in A$.    
\end{definition}

Motivated by the insights from \cite{ELL10, SZ21}, Neagu and the first-named author \cite{GINE23} study the category of actions of a unitary tensor category $\cC$ on C$^*$-algebras denoted by $C^*_{\cC}$.
This category is shown to admit a natural notion of approximate unitary equivalence and a $\cC$-equivariant Elliott intertwining argument.
An $\omega$-anomalous action of $G$ on $A$, with $3$-cocycle $\omega\in Z^3(G,\bT)$, naturally induces an action of the unitary tensor category $\Hilb(G,\omega)$ on $A$; see e.g.\ \cite[Section 5.2]{EVGI23}.
From this point of view, the notion of a $G$-coherent morphism between $\omega$-anomalous actions coincides with the notion of a morphism in $C^*_{\Hilb(G,\omega)}$. 

The flexibility of the notion of a $G$-coherent morphism is crucial for establishing our existence theorem (Theorem \ref{thm: existence}).
However, this same flexibility makes uniqueness results up to approximate unitary equivalence more subtle.
For this purpose, we introduce a notion of approximate and asymptotic Murray--von Neumann equivalence between $G$-coherent morphisms, inspired by the corresponding notion for $*$-homomorphisms introduced by Gabe \cite{GA20} (Definition \ref{defn:MvN}).
Approximate Murray--von Neumann equivalence provides the appropriate framework for our uniqueness result (Proposition \ref{prop:uniqueness}).
As an analogue of a crucial observation by Gabe, we show that the notion of approximate Murray--von Neumann equivalence turns out to coincide with proper approximate unitary equivalence for $G$-coherent morphisms between separable C$^*$-algebras when the target is either stable or has almost stable rank one; see Proposition \ref{prop:stablemvn}.
We note that our strategy of proof is somewhat different from Gabe's and can thus be viewed as an alternative proof for \cite[Proposition 3.13]{GA20}. 
It entails an extra structural observation about such \cstar-algebras (Lemma \ref{lem:sequencesettingLemma}) that also enables the proof of Theorem \ref{thm: existence}, which we are currently unable to obtain for general separable \cstar-algebras.
However, we note that the structural hypothesis for the C$^*$-algebras in our existence results (Theorem \ref{thm: existence} and Corollary \ref{cor:unital-existence}) cover all \cstar-algebras in the scope of the Elliott classification programme \cite{ZH92, Rordam04, RO16}.

In analogy with Jones' subfactor theory \cite{JO83}, and in particular Popa’s classification of amenable finite-index subfactors of $\cR$ \cite{PO94,PO95}, there has been growing interest in the existence, classification and structure of actions of unitary tensor categories on simple amenable C$^*$-algebras \cite{IZ93,CHHPJO24,ARKIKU23,GPKINE24,EVGPJO24,EVJO25,KI24,HPMU26}.
We expect that the techniques underlying our existence and uniqueness results for $G$-coherent morphisms will provide a template for future classification results for anomalous actions and, more generally, for actions of unitary tensor categories.

We briefly discuss the structure of the paper.
In Section \ref{subsec:anomaction} we recall the necessary background on anomalous actions.
In Section \ref{subsec:Gcoher} we recall the $G$-coherence category and show some structural results for morphisms.
In Section \ref{sec:MvNequivalence} we introduce approximate and asymptotic Murray von Neumann equivalence in the $G$-coherence category and its relation to proper approximate and asymptotic unitary equivalence.
In Section \ref{sec:exist+uniq} we prove the existence and uniqueness theorems for anomalous actions with the Rokhlin property.
We then apply these results in Section \ref{sec:classification} to obtain various classification results for anomalous actions on simple amenable \cstar-algebras. In Section \ref{sec:cohomologyvanish}, we treat the K-theoretic obstruction results for Rokhlin $G$-kernels, which combined with the classification results of Section \ref{sec:cohomologyvanish}, allows us to obtain Theorem \ref{introthrm:Gkerclass}. Finally, in Section \ref{sec:range} we obtain the range result Theorem \ref{introthm:rangeofinvstable} .

\subsection*{Acknowledgements} 
SGP and GS were both supported by research project G085020N funded by the Research Foundation Flanders (FWO).
SGP was also supported by postdoctoral fellowship 1249225N funded by the FWO.
GS was furthermore supported by the European Research Council under the European Union's Horizon Europe research and innovation programme (ERC grant AMEN--101124789).

For the purpose of open access, the authors apply a CC BY public copyright license to any author accepted manuscript version arising from the submission of this work.

\subsection*{Tool and computational resource disclosure}
Generative AI, in the form of Large Language Models or otherwise, has not been used by the authors in any capacity to discover or prove any of the results in this manuscript.
We used ChatGPT solely for the purpose of English language editing of specific parts during the final stages of the writing.

\section{Preliminaries}\label{sec:prelims}
\renewcommand{\thetheorem}{\thesection.\arabic{theorem}}
\setcounter{theorem}{0}

\begin{notation}
Throughout this paper $A$ and $B$ denote C$^*$-algebras and $G$ will be a countable discrete group that is often assumed to be finite.
We denote the positive elements of $A$ by $A^+$ and the unit ball of $A$ by $A_{1}$.
We denote the multiplier algebra of $A$ by $\cM(A)$ and its unitary group by $\cU\cM(A)$.
We denote by $\cU(\eins+A)$ the intersection $(\eins+A)\cap \cU( A^{\dagger})$ where $A^{\dagger}$ denotes the (forced) unitisation of $A$ and $\bf{1}$ is the unit of $A^{\dagger}$.
A sequential approximate unit $e_n$ of $A$ is called quasicentral with respect to a subalgebra $B\subset \cM(A)$ if
\[
\lim_{n\rightarrow \infty} \|e_na-ae_n\|=0,\quad a\in B.
\]
Moreover, $e_n$ is called idemopotent if $e_{n+1}e_{n}=e_n$ for all $n\in \bN$.
For a $\sigma$-unital C$^*$-algebra $A$ and a separable C$^*$-subalgebra $B\subset \cM(A)$ there always exists an idempotent approximate unit of $A$ which is quasicentral with respect to $B$.
If $a,b\in A$ are such that $\|a-b\|<\varepsilon$ for some $\varepsilon>0$ we will often write $a=_{\varepsilon}b$.

We denote by $A_{\infty}=l^{\infty}(\mathbb{N},A)/c_0(\mathbb{N},A)$ and by $A_{\infty}\cap A'$ the subalgebra of $A_{\infty}$ consisting of central sequences.
We denote the annihilator subalgebra of $A$ in $A_{\infty}$ by $A_{\infty}\cap A^{\perp}=\{ a\in A_{\infty}: ab=ba=0\ \forall b\in A\}$ and Kirchberg's central sequence algebra by $F_\infty(A)=(A_{\infty}\cap A')/(A_{\infty}\cap A^{\perp})$, which is a unital C$^*$-algebra whenever $A$ is $\sigma$-unital.
Any automorphism $\theta\in \Aut(A)$ induces an automorphism of $A_{\infty}\cap A'$ that preserves the subalgebra $A_{\infty}\cap A^{\perp}$.
Therefore $\theta$ induces an automorphism of $F_\infty(A)$ which we also denote by $\theta$ with slight abuse of notation.
Denoting by $\Inn(A)$ the normal subgroup of $\Aut(A)$ consisting of inner automorphisms of the form $\Ad(u)$ for $u\in\cU\cM(A)$, it is easy to see that every inner automorphism induces the trivial map on $F_\infty(A)$.
This construction hence yields a well-defined group homomorphism 
\[
\Out(A):=\Aut(A)/\Inn(A)\ni [\theta] \mapsto \theta\in\Aut(F_\infty(A)).
\]
Let $\Gamma$ be an abelian group and $n\in \bN$. We denote  
\[
n\Gamma=\{ng: g\in \Gamma\},\quad {}_n \Gamma=\{g\in \Gamma: ng=0\},\quad\text{and}\quad \Gamma_n=\Gamma/n\Gamma.
\]
Throughout the article, we shall give various lengthy arguments involving maps that arise as compositions of several other maps.
To lighten notation, we shall mostly omit the composition symbol $\circ$, for instance we would write $\varphi\alpha\varphi^{-1}$ in place of $\varphi\circ\alpha\circ\varphi^{-1}$.
\end{notation}

\subsection{Anomalous actions and the Rokhlin property}\label{subsec:anomaction}

In this section we recall the notion of an anomalous action of a group $G$ and the Rokhlin property for anomalous actions when $G$ is finite.
We refer the reader to \cite{THESIS,GP25} for further details.

\begin{definition}
    An \emph{anomalous action} of a discrete group $G$ on a C$^*$-algebra $A$ is a pair $(\alpha,\fu)$ consisting of maps $\alpha:G\rightarrow \Aut(A)$ and $\fu:G\times G\rightarrow \cU\cM(A)$ such that the following holds for all $g,h,k\in G$:
    \begin{enumerate}[leftmargin=*]
        \item $\alpha_g\alpha_h=\Ad(\fu_{g,h})\alpha_{gh}$,\label{item:multiplicativity}
        \item $\alpha_g(\fu_{h,k})\fu_{g,hk}\fu_{gh,k}^*\fu_{g,h}^*\in \bT \eins_{\cM(A)}$,\label{item:anomaly}
        \item $\fu_{g,1}=\fu_{1,g}=\eins$.
    \end{enumerate}
It follows from \eqref{item:multiplicativity} that the function  $\lambda_{g,h,k}=\alpha_g(\fu_{h,k})\fu_{g,hk}\fu_{gh,k}^*\fu_{g,h}^*$ yields a normalised $3$-cocycle with coefficients in the circle group. 
We denote it by $o(\alpha,\fu)\in Z^3(G,\bT)$ and call it the \emph{anomaly} of $(\alpha,\fu)$. 
We will often refer to the triple $(A,\alpha,\fu)$ as an anomalous action of the group $G$ as well.
\end{definition}

\begin{remark}
Anomalous actions are closely related to $G$-kernels.
A \emph{$G$-kernel} is a homomorphism $\theta:G\rightarrow \Out(A)$.
Following the procedure of \cite[Section 3.2]{THESIS} one can pick a lift $\widetilde{\theta}:G\rightarrow \Aut(A)$ and unitaries $\fu_{g,h}\in \cU\cM(A)$ with $\fu_{g,1}=\fu_{1,g}=\eins$ such that $\pi(\widetilde{\theta}_g)=\theta_g$ and $\widetilde{\theta}_g\widetilde{\theta}_h=\Ad(\fu_{g,h})\widetilde{\theta}_{gh}$ where $\pi:\Aut(A)\rightarrow \Out(A)$ is the quotient map.
If $Z\cM(A)=\bC$, for example if $A$ is simple, then $\widetilde{\theta}_g(\fu_{h,k})\fu_{g,hk}\fu_{gh,k}^*\fu_{g,h}^*\in \bT$. 
Thus $(\widetilde{\theta},\fu)$ is an anomalous action and the cohomology class of $o(\widetilde{\theta},\fu)$ in $H^3(G,\bT)$ coincides with the lifting obstruction of $\theta$.
For a discussion on the differences between anomalous actions, $G$-kernels and their associated invariants see for example \cite[Section 2.3]{JO21}.

Having two $G$-kernels $\alpha: G\to\Out(A)$ and $\beta: G\to\Out(B)$ with respective lifts $\widetilde{\alpha}: G\to\Aut(A)$ and $\widetilde{\beta}: G\to\Aut(B)$, we recall that $\alpha$ and $\beta$ are said to be \emph{conjugate} if there exists an isomorphism $\varphi: A\to B$ such that $\alpha_g=\pi(\varphi^{-1}\widetilde{\beta}_g\varphi)$ for all $g\in G$.
\end{remark}

We will be interested in anomalous actions with the Rokhlin property.

\begin{definition} \label{def:Rp}
Let $G$ be a finite group.
A $G$-kernel $\alpha$ on a separable C$^*$-algebra $A$ is said to have the \emph{Rokhlin property} if there exists a projection $p\in F_\infty(A)$ such that $\sum_{g\in G}\alpha_g(p)=\eins$.
We call a projection $p$ as above a \emph{Rokhlin projection}.

An anomalous action $(A,\alpha,\fu)$ of a finite group $G$ is said to have the Rokhlin property if its associated $G$-kernel has the Rokhlin property in the above sense.
\end{definition}

For $\omega\in Z^3(G,\bT)$ the $\omega$-left regular representation on $C(G)$ given by 
$(\alpha,\fu)$ with 
\[
\alpha_g(f)(h)=f(g^{-1}h) \quad\text{and}\quad \fu_{g,h}(k)=\omega(k^{-1},g,h)
\]
is an $\omega$-anomalous action on $C(G)$ such that the delta function at $1_G$ is a Rokhlin projection for $(\alpha,\fu)$.
From the $\omega$-left regular action one can also induce a Rokhlin $\omega$-anomalous action on the UHF algebra $M_{|G|^\infty}$ (see \cite[Proposition 6.1]{GP25}).
For more examples of anomalous actions with the Rokhlin property see \cite[Section 6]{THESIS}. 

In applications, we shall frequently use the Rokhlin property in the following form, the proof of which is a standard unpacking of definitions:

\begin{lemma} \label{lem:Rp}
Let $G$ be a finite group and $\alpha$ a $G$-kernel on a separable \cstar-algebra $A$ with lifting map $\widetilde{\alpha}: G\to\Aut(A)$.
Then $\alpha$ has the Rokhlin property if and only if there exists a sequence of positive contractions $f_n\in A$ satisfying the following properties for all $a\in A$ and $g,h\in G$:
	\begin{enumerate}[leftmargin=*,label=\textup{(\roman*)}]
	\item $\| [\widetilde{\alpha}_g(f_n),a]\| \to 0$, 
    \item $\| \big( \widetilde{\alpha}_g(f_n)\widetilde{\alpha}_h(f_{n})-\delta_{g,h}\widetilde{\alpha}_g(f_{n}) \big) a\|\to 0$, 
    \item $\|\big(\eins-\sum_{k\in G}\widetilde{\alpha}_k(f_{n}) \big) a\|\to 0$.
	\end{enumerate} 
\end{lemma}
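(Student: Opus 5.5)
Both directions come from unpacking $F_\infty(A)=(A_\infty\cap A')/(A_\infty\cap A^\perp)$, together with the observation that the action of $\alpha_g$ on $F_\infty(A)$ is induced by $\widetilde{\alpha}_g$ applied entrywise to representing sequences. Inner automorphisms act trivially, so the choice of lift does not matter.

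\emph{($\Rightarrow$)} Let $p\in F_\infty(A)$ be a Rokhlin projection. Choose a representing sequence $(x_n)\in\ell^\infty(\bN,A)$ whose image lies in $A_\infty\cap A'$. Since $p=p^*p$, we may replace $x_n$ by $x_n^*x_n$ without changing the class of $p$; this is because $x^*x-x^*x'$ and similar differences lie in the annihilator ideal. So we may assume $x_n\geq 0$, and after applying functional calculus with $t\mapsto\min(t,1)$ we may assume $0\le x_n\le 1$. Functional calculus commutes with passing to the quotient $F_\infty(A)$, since $A_\infty\cap A^\perp$ is an ideal of $A_\infty\cap A'$, so the class is still $p$. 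Set $f_n:=x_n$. Condition (i) holds because $\widetilde{\alpha}_g$ preserves $A_\infty\cap A'$, so $(\widetilde{\alpha}_g(f_n))_n$ is central. For (ii), orthogonality of the Rokhlin projections gives $\alpha_g(p)\alpha_h(p)=\delta_{g,h}\alpha_g(p)$ in $F_\infty(A)$. Hence the sequence $\widetilde{\alpha}_g(f_n)\widetilde{\alpha}_h(f_n)-\delta_{g,h}\widetilde{\alpha}_g(f_n)$ lies in $A_\infty\cap A^\perp$, and multiplying it by $a\in A$ gives a null sequence. For (iii), the unit of $F_\infty(A)$ acts on $A$ as the identity, that is $\eins\cdot a=a$ at the level of representatives. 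So $\sum_k\alpha_k(p)=\eins$ means precisely that $\bigl(\eins-\sum_k\widetilde{\alpha}_k(f_n)\bigr)a\to 0$.

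\emph{($\Leftarrow$)} Given $(f_n)$ satisfying (i)--(iii), condition (i) with $g=1$ shows that $(f_n)$ defines an element of $A_\infty\cap A'$. Let $p$ be its image in $F_\infty(A)$. Condition (ii) with $g=h=1$ gives $(f_n^2-f_n)a\to0$; combined with centrality this gives $a(f_n^2-f_n)\to0$ as well. Thus $f^2-f\in A_\infty\cap A^\perp$, and $p$ is a projection. Similarly, (ii) with general $g,h$ gives that the $\alpha_g(p)$ are mutually orthogonal projections. Condition (iii), together with the characterisation of $\eins_{F_\infty(A)}$ as the class acting as the identity on $A$, gives $\sum_k\alpha_k(p)=\eins$.

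The only point needing care is the two-sided annihilator condition: conditions (ii) and (iii) are stated only with $a$ on the right. This is handled by centrality (i), because $x a\to 0$ together with $[x,a]\to0$ forces $a x\to0$. The rest is bookkeeping.
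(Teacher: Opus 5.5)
Your argument is correct and is the standard unpacking of $F_\infty(A)=(A_\infty\cap A')/(A_\infty\cap A^{\perp})$ that the paper invokes without writing out: you represent $\eins$ by a sequential approximate unit, and you use centrality to turn one-sided annihilation into two-sided annihilation. The one step you assert rather than derive is that $\sum_{g\in G}\alpha_g(p)=\eins$ forces $\alpha_g(p)\alpha_h(p)=0$ for $g\neq h$; this follows because $\sum_{h\neq g}\alpha_g(p)\alpha_h(p)\alpha_g(p)=\alpha_g(p)(\eins-\alpha_g(p))\alpha_g(p)=0$ is a sum of positive elements, so each term vanishes.
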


\begin{notation}
Let $\alpha:G\rightarrow \Out(A)$ and $\beta:G\rightarrow \Out(B)$ be two $G$-kernels with scalar valued lifting obstructions $\ob(\alpha),\ob(\beta)\in H^3(G,\bT)$.
We may consider the tensor product $G$-kernel $\alpha\otimes \beta:G\rightarrow \Out(A\otimes B)$ by declaring that $(\alpha\otimes\beta)_g$ is represented by $\widetilde{\alpha}_g\otimes\widetilde{\beta}_g$, if $\widetilde{\alpha}: G\to\Aut(A)$ and $\widetilde{\beta}: G\to\Aut(B)$ are lifting maps for $\alpha$ and $\beta$, respectively.\footnote{This works both for the reduced and full tensor products.}
The lifting obstruction of $\alpha\otimes\beta$ is then equal to the product $\ob(\alpha)\ob(\beta)$.
\end{notation}

\begin{lemma}\label{lem:Rokhlintensor}
Let $G$ be a finite group and $\alpha:G\rightarrow \Out(A)$ and $\beta:G\rightarrow \Out(B)$ two $G$-kernels on separable C$^*$-algebras.
If either $\alpha$ or $\beta$ has the Rokhlin property, then so does $\alpha\otimes \beta$.
\end{lemma}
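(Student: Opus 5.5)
By symmetry of the tensor product (the flip isomorphism $A\otimes B\cong B\otimes A$ intertwines $\widetilde{\alpha}_g\otimes\widetilde{\beta}_g$ with $\widetilde{\beta}_g\otimes\widetilde{\alpha}_g$), we may assume that $\alpha$ has the Rokhlin property. The plan is to build Rokhlin elements for $\alpha\otimes\beta$ of the form $f_n\otimes e_n$ and verify the characterisation of Lemma~\ref{lem:Rp}. Here $(f_n)$ is a sequence for $\alpha$ as provided by Lemma~\ref{lem:Rp} with respect to the lift $\widetilde{\alpha}$, and $(e_n)$ is an approximate unit of $B$.

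Since $\beta$ enters only through $\widetilde{\beta}$, we need an approximate unit that is asymptotically $\widetilde{\beta}$-invariant. Because $G$ is finite and $B$ is separable, averaging an arbitrary approximate unit gives one: put $e_n=|G|^{-1}\sum_{k\in G}\widetilde{\beta}_k(e_n')$. Each $\widetilde{\beta}_k(e_n')$ is again an approximate unit, so $e_n$ is one as well. Moreover, $\widetilde{\beta}_g(e_n)-e_n\to0$ strictly, since $\widetilde{\beta}_g\widetilde{\beta}_k=\Ad(\fv_{g,k})\widetilde{\beta}_{gk}$ and $\Ad(u)$ of an approximate unit is still an approximate unit.

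Set $F_n=f_n\otimes e_n$. We check the three conditions of Lemma~\ref{lem:Rp} against $\widetilde{\alpha}_g\otimes\widetilde{\beta}_g$. Since all quantities are uniformly bounded, it suffices to test on elementary tensors $a\otimes b$, whose span is dense.
\begin{enumerate}[leftmargin=*,label=\textup{(\roman*)}]
\item The commutator $[\widetilde{\alpha}_g(f_n)\otimes\widetilde{\beta}_g(e_n),a\otimes b]$ splits as a sum of $[\widetilde{\alpha}_g(f_n),a]\otimes \widetilde{\beta}_g(e_n)b$ and $a\widetilde{\alpha}_g(f_n)\otimes[\widetilde{\beta}_g(e_n),b]$. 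Both terms tend to zero, because approximate units are quasicentral on $B$ in the trivial sense ($eb-be\to0$).
\item The product $\widetilde{\alpha}_g(f_n)\widetilde{\alpha}_h(f_n)\otimes \widetilde{\beta}_g(e_n)\widetilde{\beta}_h(e_n)$, multiplied by $a\otimes b$, is compared with $\delta_{g,h}\widetilde{\alpha}_g(f_n)a\otimes b$. Replace $\widetilde{\beta}_g(e_n)\widetilde{\beta}_h(e_n)b$ by $b$ up to a small error, using that both factors are approximate units. Then use condition (ii) for $f_n$.
\item Similarly, $\sum_k\widetilde{\alpha}_k(f_n)\otimes\widetilde{\beta}_k(e_n)$, multiplied by $a\otimes b$, is approximately $\sum_k\widetilde{\alpha}_k(f_n)a\otimes b$, which is approximately $a\otimes b$.
\end{enumerate}

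The main technical point is the bookkeeping of the error terms. In (ii), the factor $\widetilde{\alpha}_g(f_n)\widetilde{\alpha}_h(f_n)$ sits to the left of the $B$-factor, so we must be careful about the order in which $a$ and $b$ are absorbed. Because the tensor product decouples the legs, one can first approximate $(x\otimes y)(a\otimes b)=xa\otimes yb$ and estimate $\|xa\otimes(yb-b)\|\le\|x\|\|a\|\|yb-b\|$. This works for both the minimal and the maximal norm, since these are cross norms.

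Finally, the $F_n$ need not be contractions a priori, but $\|f_n\otimes e_n\|\le1$, so they are positive contractions. Lemma~\ref{lem:Rp} then gives the Rokhlin property of $\alpha\otimes\beta$. Alternatively, one can pass to $F_\infty$: the elements constructed above give a projection $p\otimes1$ in $F_\infty(A\otimes B)$ whose translates sum to the unit.
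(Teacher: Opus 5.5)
Your proposal is correct and is essentially the paper's argument: the paper also tensors the Rokhlin data for $\alpha$ with an approximate unit $(e_n)$ of $B$. It packages this as the unital equivariant $*$-homomorphism $F_\infty(A)\to F_\infty(A\otimes B)$, $[(a_n)]\mapsto[(a_n\otimes e_n)]$, whereas you check the conditions of Lemma~\ref{lem:Rp} by hand. One small point: the averaging of the approximate unit is superfluous. For any approximate unit, $\widetilde{\beta}_g(e_n)$ is again an approximate unit, so $\widetilde{\beta}_g(e_n)-e_n\to 0$ strictly automatically, as your own justification of that step already shows.
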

\begin{proof}
We may assume that $\alpha$ has the Rokhlin property.
Let $e_n$ be an approximate unit for $B$.
Then $\theta(e_n)$ is also an approximate unit for $B$ for any $\theta\in\Aut(B)$.
Therefore $\beta_g(e_n)-e_n\in B_\infty\cap B^{\perp}$ for all $g\in G$.
This yields a unital $G$-equivariant $*$-homomorphism
\begin{align*}
\iota:(F_\infty(A),\alpha)&\rightarrow (F_\infty(A\otimes B),\alpha\otimes\beta)\\
[(a_n)_n]&\mapsto [(a_n\otimes e_n)_n].
\end{align*}
If $p\in F_\infty(A)$ is a Rokhlin projection for $\alpha$ then $\iota(p)$ is clearly also a Rokhlin projection for $\alpha\otimes \beta$.
This implies the claim.
\end{proof}

\subsection{$G$-coherent morphisms}\label{subsec:Gcoher}

We now set up the categorical rudiments necessary for the classification of anomalous actions.
Our notion of morphism between anomalous actions is borrowed from the notion of an equivariant morphism between their induced actions of the tensor category of the group $G$ twisted by a $3$-cocycle.
In particular our definition should be considered a special case of \cite[Definition A]{GINE23}.

\begin{definition} \label{defn:weakcocyclemorphism}
Let $(A,\alpha,\fu)$ and $(B,\beta,\fv)$ be anomalous actions of a group $G$.
A \emph{$G$-coherent morphism} from $(A,\alpha,\fu)$ to $(B,\beta,\fv)$ is a tuple of linear maps $\phi=(\phi_g:A\rightarrow B)_{g\in G}$ such that for all $g,h\in G$ and $a,b\in A$, one has
\begin{enumerate}[label=\textup{(\roman*)},leftmargin=*]
    \item $\phi_g(a)^*\phi_g(b)=\phi_1(a^*b)$, \label{item:1weakcocyc}
    \item $\phi_{gh}(\fu_{g,h}^*\alpha_g(a)b)=\fv_{g,h}^*\beta_g(\phi_h(a))\phi_g(b)$. \label{item:2weakcocyc}
\end{enumerate}

\end{definition}

\begin{remark}
It follows from condition \ref{item:1weakcocyc} of Definition \ref{defn:weakcocyclemorphism} (upon inserting $g=1_G$) that for any $G$-coherent morphism $\phi:(A,\alpha,\fu)\rightarrow (B,\beta,\fv)$ between anomalous actions, the map $\phi_1$ is a $*$-homomorphism. Indeed, a straightforward computation using condition \ref{item:1weakcocyc} shows the $2$-positivity of $\phi_1$ and as any $a\in A$ is in the multiplicative domain of $\phi_1$, the claim follows.
\end{remark}

\begin{definition}{(cf.\ \cite[Definition 1.10]{SZ21})} \label{def:cocyclemorphism}
Let $(A,\alpha,\fu)$ and $(B,\beta,\fv)$ be anomalous actions of a group $G$.
A \emph{cocycle morphism} consists of an extendible $*$-homomorphism $\phi: A\rightarrow B$ and a map $\bv: G\rightarrow \cU\cM(B)$ satisfying
    \begin{enumerate}[label=\textup{(\roman*)},leftmargin=*]
        \item $\Ad(\bv_g)\beta_g\phi=\phi\alpha_g$, \label{def:cocyclemorphism:1}
        \item $\bv_g\beta_g(\bv_h)\fv_{g,h}\bv_{gh}^*=\phi^{+}(\fu_{g,h})$ \label{def:cocyclemorphism:2}
    \end{enumerate}
for all $g,h\in G$.
If $\phi$ is invertible, we call $(\phi,\bv)$ a \emph{cocycle conjugacy}.\footnote{We refer to \cite{SZ21} for the details of the map $\phi^{+}$ when $\phi$ is extendible. We will mostly care about the case that $\phi$ is non-degenerate, in which case $\phi^+$ is simply the unique extension of $\phi$ to a unital $*$-homomorphism $\cM(A)\rightarrow \cM(B)$.}
\end{definition}

\begin{remark} \label{rem:coc-morphs-induce-G-coh}
Cocycle morphisms $(\phi,\bv)$ always induce $G$-coherent morphisms by considering the family of maps $\phi_g(\cdot)=\bv_g^*\phi(\cdot)$ for $g\in G$, as can be checked from the defining conditions easily.
In particular, if $\varphi: A\to B$ is genuinely equivariant (i.e., it satisfies the properties above in place of $\phi$ with $\bv=\eins$), then the tuple $(\varphi)_{g\in G}$ is also a $G$-coherent morphism in a trivial way.
The passage from cocycle morphisms to $G$-coherent morphisms is generally not a reversible procedure, however, which is why the notion of $G$-coherent morphisms comes with the advantage of more flexibility that cocycle morphisms do not always have.
To see that the above assignment is not necessarily surjective, consider any non-unital \cstar-algebra $A$ and a unitary representation $\bu: G\to\cU\cM(A)$ whose range does not lie in $\cU(A^\dagger)$.
Then the family of maps $\phi_g: A\to A^\dagger$ given by $\phi_g(a)=\bu_g^*a$ is easily seen to form a $G$-coherent morphism when we equip $A^\dagger$ with the trivial $G$-action and $A$ with the action $\alpha_g=\Ad(\bu_g)$.
If one were to assume that this was induced from a cocycle morphism as above, it would immediately force $\bu_g\in\cU(A^\dagger)$ for $g\in G$, contradicting the starting assumption.
\end{remark}

We nevertheless observe the following property of $G$-coherent morphisms, which in a sense asserts that they bear a great deal of conceptual similarity with cocycle morphisms.

\begin{proposition} \label{prop:leftinnerproduct}
Let $\phi:(A,\alpha,\fu)\rightarrow (B,\beta,\fv)$ be a $G$-coherent morphism between anomalous actions.
Then there exists a unique tuple of partial isometries $(\bv_g)_{g\in G}$ in $B^{**}$ such that the following holds for all $g,h\in G$ and $a\in A$:
	\begin{enumerate}[label=\textup{(\alph*)},leftmargin=*]
    \item $\bv_1$ is the support projection of $\phi_1(A)\subseteq B$,\label{prop:leftinnerproduct:a}
    \item $\phi_g(a)=\bv_g^*\phi_1(a)$, \label{prop:leftinnerproduct:b}
    \item $\bv_g^*\bv_g=\bv_1$ and $\bv_g\bv_g^*=\beta_g(\bv_1)$,\footnote{With slight abuse of notation, we write $\beta_g$ in place of $\beta_g^{**}$ for the induced automorphism on the double dual.} \label{prop:leftinnerproduct:c}
    \item $\bv_g\beta_g(\bv_h)\fv_{g,h}\bv_{gh}^*=\phi_1^{**}(\fu_{g,h})$. \label{prop:leftinnerproduct:d}
    \end{enumerate}
In particular $\phi$ also satisfies
	\begin{equation}\label{eqn:leftinnerproduct}
   \phi_g(\alpha_g(a))\phi_g(\alpha_g(b))^*=\beta_g(\phi_1(ab^*)),\quad g\in G,\ a,b\in A.
	\end{equation}
\end{proposition}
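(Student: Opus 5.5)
The plan is to construct $\bv_g$ as a strong limit in $B^{**}$, using an approximate unit of $A$, and then to read off properties \ref{prop:leftinnerproduct:a}--\ref{prop:leftinnerproduct:d} from conditions \ref{item:1weakcocyc} and \ref{item:2weakcocyc} of Definition \ref{defn:weakcocyclemorphism}.

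Let $(e_\lambda)$ be an approximate unit of $A$, and set $p=\bv_1$, the support projection of $\phi_1(A)$; this is the strong limit of $\phi_1(e_\lambda)$ in $B^{**}$. The first step is to show that $\phi_g$ is a right $A$-module map over $\phi_1$, that is, $\phi_g(ab)=\phi_g(a)\phi_1(b)$. For this we take $h=1$ in condition \ref{item:2weakcocyc}. Since $\fu_{g,1}=\fv_{g,1}=\eins$, we get
\[
\phi_g(\alpha_g(a)b)=\beta_g(\phi_1(a))\phi_g(b).
\]
Taking $g=1$ instead gives $\phi_h(ab)=\phi_h(a)\phi_1(b)$ directly. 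Together with condition \ref{item:1weakcocyc}, this shows that $a\mapsto\phi_g(a)$ is an isometric right-module map $A_A\to B$, meaning $\phi_g(a)^*\phi_g(b)=\phi_1(a^*b)$.

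We then define $\bv_g^*$ as the strong limit of $\phi_g(e_\lambda)$. It exists because $\phi_g(e_\lambda)\phi_1(c)=\phi_g(e_\lambda c)\to\phi_g(c)$ for every $c\in A$, and it is annihilated by $1-p$ on the right: $(\phi_g(e_\lambda)-\phi_g(e_\mu))^*(\phi_g(e_\lambda)-\phi_g(e_\mu))=\phi_1((e_\lambda-e_\mu)^2)\to0$ strongly on the range of $p$. Precisely, one defines $\bv_g^*$ on $\overline{\phi_1(A)H}$ by $\phi_1(a)\xi\mapsto\phi_g(a)\xi$ in the universal representation; this map is isometric by \ref{item:1weakcocyc}. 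One extends it by zero on the complement. By construction $\bv_g^*\bv_g=p$, and \ref{prop:leftinnerproduct:b} holds. Uniqueness is immediate from \ref{prop:leftinnerproduct:b} together with $\bv_g^*\bv_g=\bv_1$, since these determine $\bv_g^*$ on $pH$ and force it to vanish on $(1-p)H$.

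The range projection of $\bv_g^*$ is the support of $\phi_g(A)$. The next task is to show that $\bv_g\bv_g^*=\beta_g(p)$, which is the main obstacle: condition \ref{item:1weakcocyc} only controls $\phi_g^*\phi_g$, not $\phi_g\phi_g^*$. For this we use the displayed identity $\phi_g(\alpha_g(a)b)=\beta_g(\phi_1(a))\phi_g(b)$. It gives $\bv_g^*\phi_1(\alpha_g(a))=\beta_g(\phi_1(a))\bv_g^*$ on the range of $p$, so the range of $\bv_g^*$ is contained in the support of $\beta_g(\phi_1(A))$, which is $\beta_g(p)$. For the reverse inclusion, note that $\phi_g(\alpha_g(e_\lambda)b)=\beta_g(\phi_1(e_\lambda))\phi_g(b)$. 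Since the elements $\alpha_g(e_\lambda)b$ span a dense subset of $A$, the closed span of $\phi_g(A)B$ contains $\beta_g(\phi_1(e_\lambda))\phi_g(A)B$. We then need $\phi_g(A)B$ to be dense in $\beta_g(\phi_1(A))B$. I would try to get this by applying \ref{item:2weakcocyc} with $h=g^{-1}$: this yields $\phi_1(\fu_{g,g^{-1}}^*\alpha_g(a)b)=\fv^*_{g,g^{-1}}\beta_g(\phi_{g^{-1}}(a))\phi_g(b)$, so $\phi_1(A)$ lies in the closed span of $\fv^*\beta_g(\phi_{g^{-1}}(A))\phi_g(A)$. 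Applying $\beta_{g}$-twisting symmetrically, and using that $\beta_g\beta_{g^{-1}}$ is inner via $\fv$, one gets $\beta_g(\phi_1(A))\subseteq\overline{\mathrm{span}}\,\phi_g(A)\cdot(\ldots)$, which gives $\beta_g(p)\le\bv_g\bv_g^*$. I expect the bookkeeping with the multipliers $\fv_{g,g^{-1}}$ here to be the delicate part.

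Finally, \ref{prop:leftinnerproduct:d} follows by rewriting \ref{item:2weakcocyc} via \ref{prop:leftinnerproduct:b}. The left side becomes $\bv_{gh}^*\phi_1^{**}(\fu_{g,h}^*)\phi_1(\alpha_g(a)b)$. The right side becomes $\fv^*_{g,h}\beta_g(\bv_h^*)\beta_g(\phi_1(a))\bv_g^*\phi_1(b)$, and using the intertwining relation $\beta_g(\phi_1(a))\bv_g^*=\bv_g^*\phi_1(\alpha_g(a))$ this equals $\fv^*_{g,h}\beta_g(\bv_h^*)\bv_g^*\phi_1(\alpha_g(a)b)$. Density gives $\bv_{gh}^*\phi_1^{**}(\fu_{g,h})^*p=\fv_{g,h}^*\beta_g(\bv_h)^*\bv_g^*p$. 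Taking adjoints and using that all the partial isometries have the right supports yields \ref{prop:leftinnerproduct:d}. Equation \eqref{eqn:leftinnerproduct} then reads
\[
\bv_g^*\phi_1(\alpha_g(ab^*))\bv_g=\beta_g(\phi_1(ab^*))\bv_g^*\bv_g\ldots
\]
and follows from the intertwining relation together with $\bv_g^*\bv_g$ being the identity on $\beta_g(\phi_1(A))$ after conjugation, that is, $\bv_g^*\bv_g... =\beta_g(p)$ acts as a unit on $\beta_g(\phi_1(A))$ by \ref{prop:leftinnerproduct:c}.
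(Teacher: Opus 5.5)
\textbf{Where you match the paper.} Your route is the paper's. Your $\bv_g^*$, the strong limit of $\phi_g(e_\lambda)$, is exactly the paper's $\phi_g^{**}(\eins)$. As in the paper, (b) comes from (ii) with $g=1$, the relation $\bv_g^*=\beta_g(p)\bv_g^*$ from (ii) with $h=1$, and (d) from rewriting (ii) via (b). Your derivation of (d) does not use the step discussed below, which matters for the fix.

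\textbf{Orientation of (c).} Your construction makes $p$ the source projection of $\bv_g^*$, i.e.\ $\bv_g\bv_g^*=p$. The ``range of $\bv_g^*$'' you then study is $\bv_g^*\bv_g$. This is what the paper's proof actually establishes: $\bv_g\bv_g^*=\bv_1$ and $\bv_g^*\bv_g=\beta_g(\bv_1)$. The displayed (c) has the two products interchanged. Your uniqueness argument genuinely needs $\bv_g\bv_g^*=p$, so write it that way.

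\textbf{The gap.} The inequality $\beta_g(p)\le\bv_g^*\bv_g$ is never proved. Your sentence about $\beta_g(\phi_1(e_\lambda))\phi_g(A)B$ is circular, and the sketch stops at ``$\phi_g(A)\cdot(\ldots)$''. This is not a side issue: the ``in particular'' identity depends on exactly this inequality. Indeed $\phi_g(\alpha_g(a))\phi_g(\alpha_g(b))^*=\beta_g(\phi_1(ab^*))\bv_g^*\bv_g$, and the bound $\bv_g^*\bv_g\le\beta_g(p)$ you did prove does not let you drop the factor $\bv_g^*\bv_g$.

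\textbf{How to close it with what you have.} The identity you already wrote down suffices if you take left supports instead of trying to move $\phi_g$ to the left. From
$\phi_1(\fu_{g,g^{-1}}^*\alpha_g(a)b)=\fv_{g,g^{-1}}^*\beta_g(\phi_{g^{-1}}(a))\phi_g(b)$,
density of such elements in $A$, and the fact that the left support of $\phi_{g^{-1}}(a)=\bv_{g^{-1}}^*\phi_1(a)$ lies under $\bv_{g^{-1}}^*\bv_{g^{-1}}$, you get
$p\le \fv_{g,g^{-1}}^*\beta_g(\bv_{g^{-1}}^*\bv_{g^{-1}})\fv_{g,g^{-1}}$.
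Since $\beta_g\beta_{g^{-1}}=\Ad(\fv_{g,g^{-1}})$, the right-hand side equals $\beta_{g^{-1}}^{-1}(\bv_{g^{-1}}^*\bv_{g^{-1}})$. Hence $\beta_{g^{-1}}(p)\le\bv_{g^{-1}}^*\bv_{g^{-1}}$, and $g$ is arbitrary.

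\textbf{The paper's version.} The paper instead applies (d) at the pair $(g^{-1},g)$. Using $\beta_{g^{-1}}\beta_g=\Ad(\fv_{g^{-1},g})$ and $\bv_g=\bv_g\beta_g(\bv_1)$, it gets $\phi_1^{**}(\fu_{g^{-1},g})=\bv_{g^{-1}}\fv_{g^{-1},g}\beta_g^{-1}(\bv_g)$. Multiplying on the left by $\phi_1^{**}(\fu_{g^{-1},g})^*$ and applying $\beta_g$ gives $\beta_g(\bv_1)=X\bv_g$ for some $X\in B^{**}$, hence $\beta_g(\bv_1)\le\bv_g^*\bv_g$. Since your proof of (d) is independent of the missing step, this argument is also available to you.
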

\begin{proof}
By functoriality, both of the anomalous actions extend to anomalous actions on the double duals $A^{**}$ and $B^{**}$, respectively. 
By \cref{defn:weakcocyclemorphism}\ref{item:1weakcocyc} each map $\phi_g$ is clearly bounded and linear, so it extends to a weak$^*$-continuous map $\phi_g^{**}:A^{**}\rightarrow B^{**}$ between the double duals.
As the involution is weak$^*$-continuous and multiplication is separately weak$^*$-continuous, it follows that the tuple of maps $(\phi_g^{**})_{g\in G}$ still satisfies the conditions in Definition \ref{defn:weakcocyclemorphism}, i.e., it can be understood as a $G$-coherent morphism between the double duals.

Set $\bv_g=\phi_g^{**}(\eins)^*$ for each $g\in G$. 
Let $(e_\lambda)_{\lambda\in\Lambda}$ be an increasing approximate unit for $A$, which thereby converges to $\eins_{A^{**}}$ in the weak$^*$ topology.
Then clearly $\bv_1=\lim_\Lambda \phi_1(e_\lambda)$ is the support projection of the range of $\phi_1$.
This verifies \ref{prop:leftinnerproduct:a}. 
As a consequence of Definition \ref{defn:weakcocyclemorphism}\ref{item:2weakcocyc} with $g=1$ and $a=\eins$, one has the equation
\[
\phi_h^{**}(b)=\phi_h^{**}(\eins)\phi_1^{**}(b)=\bv_h^*\phi_1^{**}(b),\quad b\in A^{**}.
\]  
Upon considering $b\in A$, this yields \ref{prop:leftinnerproduct:b}.
Using Definition \ref{defn:weakcocyclemorphism}\ref{item:1weakcocyc}, one has that
\[
\bv_g\bv_g^* = \phi_g^{**}(\eins)^*\phi_g^{**}(\eins)=\phi_1^{**}(\eins) = \bv_1,\quad g\in G.
\] 
Applying Definition \ref{defn:weakcocyclemorphism}\ref{item:2weakcocyc} with $a=b=\eins_{A^{**}}$ yields
\[
\bv_{gh}^* \phi_1^{**}(\fu_{g,h}^*) = \fv_{g,h}^* \beta_g(\bv_h^*) \bv_g^*.
\]
After considering the adjoint on both sides, we may multiply from the right with $\bv_{gh}^*$ and use the above equation to get
\[
\bv_g\beta_g(\bv_h)\fv_{g,h}\bv_{gh}^* = \phi_1^{**}(\fu_{g,h}) \bv_{gh}\bv_{gh}^* = \phi_1^{**}(\fu_{g,h}).
\]
This yields \ref{prop:leftinnerproduct:d}.
We shall now verify the second half of \ref{prop:leftinnerproduct:c}.
Applying Definition \ref{defn:weakcocyclemorphism}\ref{item:2weakcocyc} with $h=1$, one has
\[
\bv_g^* = \phi_g^{**}(\eins) = \beta_g(\phi_1^{**}(\eins))\phi_g^{**}(\eins) = \beta_g(\bv_1)\bv_g^*.
\]
Thus $\bv_g^*\bv_g\leq \beta_g(\bv_1)$.
On the other hand, applying \ref{prop:leftinnerproduct:d} to the pair $(g^{-1},g)$ yields
\[
\phi_1^{**}(\fu_{g^{-1},g})=\bv_{g^{-1}} \beta_{g^{-1}}(\bv_g)\fv_{g^{-1},g}=\bv_{g^{-1}}\fv_{g^{-1},g}\beta_g^{-1}(\bv_g).
\]
Multiplying on the left by $\phi_1^{**}(\fu_{g^{-1},g})^*$ and applying $\beta_g$ one has that
\[
\beta_g(\bv_1)=\beta_g\big( \phi_1^{**}(\fu_{g^{-1},g})^* \fv_{g^{-1},g} \bv_{g^{-1}}  \big) \bv_g.
\]
Thus $\beta_g(\bv_1)\leq\bv_g^*\bv_g$ and we get equality by the previous step.
For the ``in particular'' part, we may use Definition \ref{defn:weakcocyclemorphism}\ref{item:2weakcocyc} with $h=1$ to see that
\begin{align*}
    \phi_g(\alpha_g(a))\phi_g(\alpha_g(b))^* &= \lim_\Lambda \phi_g(\alpha_g(a)e_\lambda)\phi_g(\alpha_g(b)e_\lambda)^*\\
    &=\lim_{\Lambda} \beta_g(\phi_1(a))\phi_g(e_\lambda)\phi_g(e_\lambda)^*\beta_g(\phi_1(b))^* \\
    &=\beta_g(\phi_1(a))\bv_g^*\bv_g\beta_g(\phi_1(b))^*\\
    &=\beta_g(\phi_1(a))\beta_g(\bv_1)\beta_g(\phi_1(b))^*\\
    &=\beta_g(\phi_1(ab^*))
\end{align*}
for all $a,b\in A$ and $g\in G$.
\end{proof}

We may compose $G$-coherent morphisms by composing the linear maps with matching indices.
This yields a category whose objects are anomalous $G$-actions and whose morphisms are $G$-coherent morphisms between them.
The identity morphism of an anomalous $G$-action is simply given by the trivial tuple consisting of the identity map, indexed over $G$.
Our next observation entails that isomorphisms in this category coincide with cocycle conjugacies.
For this reason, we shall not distinguish between cocycle conjugacies and $G$-coherent isomorphisms for the rest of the article.

\begin{lemma}\label{lem: cocycleisoscoincide}
Let $\phi: (A,\alpha,\fu)\rightarrow (B,\beta,\fv)$ be a $G$-coherent morphism such that $\phi_1$ is a non-degenerate $*$-homomorphism.
Then the tuple of partial isometries $(\bv_g)_{g\in G}$ in $B^{**}$ from Proposition~\ref{prop:leftinnerproduct} is in fact a tuple in $\cU\cM(B)$.
In particular, $\phi$ is uniquely induced from a cocycle morphism as in Remark~\ref {rem:coc-morphs-induce-G-coh}.
Furthermore, two anomalous $G$-actions are $G$-coherently isomorphic if and only if they are cocycle conjugate.
\end{lemma}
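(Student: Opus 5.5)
Since $\phi_1$ is non-degenerate, its support projection is $\eins$, so by Proposition~\ref{prop:leftinnerproduct}\ref{prop:leftinnerproduct:a} we have $\bv_1=\eins_{B^{**}}$. Part \ref{prop:leftinnerproduct:c} then gives $\bv_g^*\bv_g=\eins$ and $\bv_g\bv_g^*=\beta_g(\eins)=\eins$. Hence each $\bv_g$ is a unitary in $B^{**}$.

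Next we show that $\bv_g$ multiplies $B$ into itself, and so lies in $\cM(B)\subseteq B^{**}$ (the idealizer of $B$ in $B^{**}$). By non-degeneracy, $\phi_1(A)B$ is dense in $B$. For $a\in A$ and $b\in B$, Proposition~\ref{prop:leftinnerproduct}\ref{prop:leftinnerproduct:b} gives $\bv_g^*\phi_1(a)b=\phi_g(a)b\in B$. Therefore $\bv_g^*B\subseteq B$, and taking adjoints gives $B\bv_g\subseteq B$. For the other side we use the ``in particular'' identity \eqref{eqn:leftinnerproduct}, or directly Definition~\ref{defn:weakcocyclemorphism}\ref{item:2weakcocyc} with $h=1$: it gives $\phi_g(\alpha_g(a)b)=\beta_g(\phi_1(a))\phi_g(b)$, so $\phi_g(b)^*\beta_g(\phi_1(a))^*\in B$. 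Since $\beta_g\phi_1$ is non-degenerate, elements $\beta_g(\phi_1(a))^*c$ span a dense subset of $B$. Hence $b'\mapsto \bv_g b'$ can be checked on $\beta_g(\phi_1(A))B$ using $\bv_g\beta_g(\phi_1(a))=\bv_g\beta_g(\phi_1(a))\bv_g^*\bv_g$. A cleaner route is to note $\phi_g(a)^*=\phi_1(a)^*\bv_g$, which gives $B\bv_g$ from the left, and to use $\bv_g\bv_g^*=\eins$ together with \eqref{eqn:leftinnerproduct}, which shows $\bv_g\phi_1(a)^*$-type products stay in $B$. The main obstacle is exactly this two-sided check, namely showing $\bv_g B\subseteq B$. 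I would obtain it from the equation $\bv_g^*=\beta_g(\bv_1)\bv_g^*$ and the relation $\phi_g(\alpha_g(a))=\bv_g^*\phi_1(\alpha_g(a))$, combined with \eqref{eqn:leftinnerproduct}. Together these give $\bv_g^*\phi_1(\alpha_g(a))\phi_1(\alpha_g(b))^*\bv_g=\beta_g(\phi_1(ab^*))$. Rearranging (since $\bv_g$ is unitary) yields $\phi_1(\alpha_g(a))\phi_1(\alpha_g(b))^*\bv_g=\bv_g\beta_g(\phi_1(ab^*))$. The left side lies in $B\bv_g\subseteq B$, and the elements $\beta_g(\phi_1(ab^*))$ generate $B$ non-degenerately, so $\bv_gB\subseteq B$. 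Thus $\bv_g\in\cU\cM(B)$.

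With $\bv_g\in\cU\cM(B)$, the pair $(\phi_1,\bv)$ is a cocycle morphism. Condition \ref{prop:leftinnerproduct:d} of Proposition~\ref{prop:leftinnerproduct} is Definition~\ref{def:cocyclemorphism}\ref{def:cocyclemorphism:2}, because $\phi_1^{**}$ restricts to $\phi_1^+$ on $\cM(A)$ in the non-degenerate case. For Definition~\ref{def:cocyclemorphism}\ref{def:cocyclemorphism:1}, apply Definition~\ref{defn:weakcocyclemorphism}\ref{item:2weakcocyc} with $h=1$ and $b$ running over an approximate unit. This gives $\bv_g^*\phi_1(\alpha_g(a))=\beta_g(\phi_1(a))\bv_g^*$, that is, $\phi_1\alpha_g=\Ad(\bv_g)\beta_g\phi_1$. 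By \ref{prop:leftinnerproduct:b}, $\phi_g=\bv_g^*\phi_1$, so $\phi$ is induced by $(\phi_1,\bv)$. Uniqueness of this cocycle morphism follows from the uniqueness in Proposition~\ref{prop:leftinnerproduct}: any inducing pair $(\psi,\bw)$ must have $\psi=\phi_1$, and $\bw_g^*=\phi_g^{**}(\eins)^*{}^*$ is forced, so $\bw_g=\bv_g$.

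For the final statement, a cocycle conjugacy induces a $G$-coherent morphism by Remark~\ref{rem:coc-morphs-induce-G-coh}. It is invertible, with inverse induced by the inverse cocycle conjugacy $(\phi^{-1},(\phi^{-1})^{+}(\bv_g^*))$; functoriality of the assignment shows the two composites are identities. Conversely, if $\phi$ is a $G$-coherent isomorphism with inverse $\psi$, then $\psi_1\phi_1=\id$ and $\phi_1\psi_1=\id$. So $\phi_1$ is a $*$-isomorphism, in particular non-degenerate, and the first part shows $\phi$ comes from a cocycle morphism $(\phi_1,\bv)$ with $\phi_1$ invertible, i.e.\ a cocycle conjugacy.
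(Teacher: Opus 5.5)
Your proof is correct and follows essentially the paper's route. $\bv_1=\eins$ forces unitarity, non-degeneracy of $\phi_1$ gives $\bv_g^*B\subseteq B$, and \eqref{eqn:leftinnerproduct} together with non-degeneracy of $\beta_g\phi_1$ gives the other inclusion: you derive $\bv_g\beta_g\phi_1(A)\subseteq B$ directly, whereas the paper shows $B\subseteq\overline{\phi_g(A)B}=\bv_g^*B$ and then uses unitarity. You should delete the exploratory false starts in your second paragraph and keep only the final argument.
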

\begin{proof}
Let $(\bv_g)_{g\in G}$ be the tuple of partial isometries from Proposition~\ref{prop:leftinnerproduct}.
Since $\bv_1$ is the support projection of $\phi_1(A)$, which by assumption contains an approximate unit for $B$, it follows immediately that $\bv_1=\eins$, and that $\bv_g$ is a unitary for each $g\in G$.
Using the nondegeneracy of $\phi_1$, we can see that
\[
\bv_g^*\cdot B = \bv_g^*\cdot\overline{\phi_1(A)B} = \overline{\phi_g(A)B}\subseteq B.
\]
Let $(e_\lambda)$ be an increasing approximate unit for $A$.
Clearly $\beta_g\phi_1$ is also a nondegenerate $*$-homomorphism, hence we can see from condition \eqref{eqn:leftinnerproduct} that
\[
b=\lim_\Lambda \beta_g(\phi_1(e_\lambda))b = \lim_\Lambda \phi_g(\alpha_g(e_\lambda^{1/2}))\phi_g(\alpha_g(e_\lambda^{1/2}))^* b \in \overline{\phi_g(A)B}
\]
for all $b\in B$.
This implies $\bv_g^*\cdot B = B$.
Since $\bv_g$ is a unitary, we also obtain $\bv_g\cdot B=B$ and hence $\bv_g\in\cU\cM(B)$.\footnote{Recall that multipliers can be defined as those elements $x\in B^{**}$ with $xB+Bx\subseteq B$.}

Hence $\phi_g^{**}$ restricts to a linear map $\cM(A)\to\cM(B)$ for each $g\in G$, and condition \ref{prop:leftinnerproduct:d} yields that they satisfy the cocycle identity in Definition \ref{def:cocyclemorphism}\ref{def:cocyclemorphism:2}.
Furthermore, it follows for $a\in A^+$ and $g\in G$ that
    \[
    \Ad(\bv_g^*)\phi_1(\alpha_g(a))=\phi_g(\alpha_g(a^{1/2}))\phi_g(\alpha_g(a^{1/2}))^*\stackrel{\eqref{eqn:leftinnerproduct}}{=} \beta_g(\phi_1(a)).
    \]
By conjugating with $\bv_g$ on both sides and keeping in mind that all the involved maps are linear, we obtain $\phi_1\alpha_g = \Ad(\bv_g)\beta_g\phi_1$ for all $g\in G$.
So we see that $(\phi_1,\bv)$ is a cocycle morphism that uniquely induces $\phi$ as per Remark~\ref {rem:coc-morphs-induce-G-coh}.
It is clearly a cocycle conjugacy if we assume that $\phi_1$ is an isomorphism, which is the case if $\phi$ was assumed to be a $G$-coherent isomorphism.
This finishes the proof.
\end{proof}
   
For an anomalous action $(A,\alpha,\fu)$, any unitary $u\in \cU\cM(A)$ induces a cocycle morphism $\Ad(u):(A,\alpha,\fu)\rightarrow (A,\alpha,\fu)$, which in the language of $G$-coherent morphisms is given by
\[
\Ad(u)_g(a)=\alpha_g(u)au^*,\quad g \in G,\ a\in A.
\]
As discussed for example in \cite{EVGI23}, anomalous actions of a group $G$ with anomaly $\omega$ induce actions of the tensor category  $\Hilb(G,\omega)$.
Then $G$-coherent morphisms between anomalous actions coincide with the notion of equivariant morphisms between the induced actions of $\Hilb(G,\omega)$.
In particular, it follows from \cite[Section 4]{GINE23} that the category whose objects are $\omega$-anomalous actions and morphisms are $G$-coherent morphisms admits inductive limits (see also the discussion in \cite[Section 6.1]{THESIS}).
These observations allow us to collect a few notions and results from the intertwining machinery developed in \cite{GINE23} when restricted to the case of $\Hilb(G,\omega)$ actions.

\begin{definition}[cf.\ {\cite[Definition C]{GINE23}}]
Let $(A,\alpha,\fu)$ and $(B,\beta,\fv)$ be anomalous actions of a countable group $G$.
Assume that $A$ is separable.
Let $\phi,\psi:(A,\alpha,\fu)\rightarrow (B,\beta,\fv)$ be two $G$-coherent morphisms.
We say that $\phi$ and $\psi$ are \emph{proper approximate unitarily equivalent} if there exists a sequence of unitaries $u_n\in \cU(\eins+B)$ such that
\[
\lim_{n\rightarrow \infty} \beta_g(u_n)\phi_g(a)u_n^*=\psi_g(a)
\]
for all $g\in G$ and $a\in A$. 
We denote this relation by $\phi\approx_{\mathrm{pu}}^G \psi$. 
\end{definition}

When $G$ is the trivial group and $A$ and $B$ are equipped with the trivial action, then proper approximate unitary equivalence of the $*$-homomorphisms $\phi,\psi:A\rightarrow B$ is just denoted $\phi\approx_{\mathrm{pu}}\psi$.
When $B$ is unital, then proper approximate unitary equivalence coincides with the notion of approximate unitary equivalence where one takes $u_n\in \cU(B)$, which is simply denoted by $\phi\approx_{\mathrm{u}}^G \psi$ and if $G$ is trivial by $\phi\approx_{\mathrm{u}}\psi$.

As a consequence of \cite[Theorem D, Remark 6.6]{GINE23}, we get the following generalisation of the classical two-sided Elliott intertwining argument \cite[Corollary 2.3.4]{RO02} (see also \cite{ELL10}).
This rests on the basic observation that the set of $G$-coherent morphisms between two anomalous $G$-actions carries a natural topology, whereby the tuples $(\phi_g)_g$ and $(\psi_g)_g$ are close if for a large finite set $F\subset G$, the maps $\phi_g$ and $\psi_g$ are close in the point-norm topology for every $g\in F$.
When $G$ is countable and the involved \cstar-algebras are separable, it is a routine argument that this defines a Polish topology induced by a metric satisfying the abstract requirements of \cite{ELL10}.
This can be considered a superior feature of $G$-coherent morphisms as opposed to another, much less well-behaved, topology considered in \cite{SZ21} on the set of cocycle morphisms between two separable \cstar-dynamical systems.

We would like to point out a conceptual connection between the intertwining theorem below, generalizing the one in \cite{SZ21} for genuine $G$-actions, and an ad-hoc generalization of that intertwining theorem exhibited by Nawata in the proof of \cite[Theorem 8.1]{Nawata23}.

\begin{theorem}\label{thm: Elliott intertwining}
Let $(A,\alpha,\fu)$ and $(B,\beta,\fv)$ be two anomalous actions of a countable discrete group $G$ with $o(\alpha,\fu)=o(\beta,\fv)$ on separable C$^*$-algebras.
Let $\phi:(A,\alpha,\fu)\rightarrow (B,\beta,\fv)$ and $\psi:(B,\beta,\fv)\rightarrow (A,\alpha,\fu)$ two $G$-coherent morphisms satisfying 
\[
\phi\circ\psi\approx_{\mathrm{pu}}^G\id_B \quad\text{and}\quad \psi\circ\phi\approx_{\mathrm{pu}}^G \id_A.
\]
Then $\phi$ and $\psi$ are proper approximately unitary equivalent to mutually inverse cocyle conjugacies.
\end{theorem}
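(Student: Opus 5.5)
The statement is flagged in the paper as a consequence of \cite[Theorem D, Remark 6.6]{GINE23}. I would therefore prove it by placing our anomalous actions into the framework of \cite{GINE23} and checking that its abstract two-sided intertwining theorem applies. Fix $\omega=o(\alpha,\fu)=o(\beta,\fv)\in Z^3(G,\bT)$. As recalled above, $\omega$-anomalous actions induce actions of $\Hilb(G,\omega)$, and $G$-coherent morphisms are exactly the equivariant morphisms in $C^*_{\Hilb(G,\omega)}$. Composition is indexwise, so $\phi\circ\psi$ and $\psi\circ\phi$ are $G$-coherent endomorphisms. Proper approximate unitary equivalence is implemented by the inner $G$-coherent automorphisms $\Ad(u)$, $u\in\cU(\eins+B)$, given by $\Ad(u)_g(b)=\beta_g(u)bu^*$.

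The first step is to verify the abstract hypotheses of Elliott's intertwining framework \cite{ELL10}.
\begin{enumerate}[leftmargin=*,label=\textup{(\arabic*)}]
\item The morphism sets carry a complete metric: enumerate $G=\{g_1,g_2,\dots\}$ and dense sequences $(a_k)$ of $A_1$, and set
\[
d(\phi,\phi')=\sum_{i,k}2^{-i-k}\|\phi_{g_i}(a_k)-\phi'_{g_i}(a_k)\|.
\]
Conditions \ref{item:1weakcocyc} and \ref{item:2weakcocyc} of Definition \ref{defn:weakcocyclemorphism} are closed under pointwise limits, which gives completeness.
\item Composition is continuous. This uses that each $\phi_g$ is contractive, by \ref{item:1weakcocyc} and the fact that $\phi_1$ is a $*$-homomorphism.
\item The inner automorphisms $\Ad(u)$ are isometries for $d$ under left and right composition.
\end{enumerate}
For (3), note that $\Ad(u)_g\circ\phi_g$ should be read in the $G$-coherent composition sense. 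One must check that $(\Ad(u)\circ\phi)_g(a)=\beta_g(u)\phi_g(a)u^*$, so that the distance is preserved because $u$ and $\beta_g(u)$ are unitaries. Similarly, $\phi\circ\Ad(u)$ equals $\Ad(\phi^\dagger(u))\circ\phi$ in an appropriate sense. Here one uses \ref{item:2weakcocyc} with $\phi_g(\alpha_g(u)au^*)$ together with Proposition \ref{prop:leftinnerproduct}, and the proper unitary $u\in\cU(\eins+A)$ is pushed through $\phi_1^\dagger$.

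The second step is the standard back-and-forth. Choose increasing finite sets $F_n\subset A$, $E_n\subset B$ and finite $K_n\subset G$ exhausting everything, and choose tolerances $\varepsilon_n=2^{-n}$. Inductively pick proper unitaries $u_n\in\cU(\eins+B)$ and $w_n\in\cU(\eins+A)$. Set
\[
\phi^{(n)}=\Ad(u_n\cdots)\circ\phi,\qquad \psi^{(n)}=\Ad(w_n\cdots)\circ\psi,
\]
arranged so that $\psi^{(n)}\circ\phi^{(n)}\approx\id_A$ on $K_n\times F_n$ and $\phi^{(n+1)}\circ\psi^{(n)}\approx\id_B$ on $K_n\times E_n$. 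The isometry property (3) converts each hypothesis $\approx^G_{\mathrm{pu}}$ into the required correction. The sequences $\phi^{(n)}$ and $\psi^{(n)}$ are then Cauchy in $d$, and their limits $\Phi,\Psi$ satisfy $\Psi\Phi=\id_A$ and $\Phi\Psi=\id_B$. Each limit is a limit of inner perturbations of $\phi$ or $\psi$, so $\Phi\approx_{\mathrm{pu}}^G\phi$ and $\Psi\approx_{\mathrm{pu}}^G\psi$.

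The final step is to see that $\Phi$ is a $G$-coherent isomorphism. Then $\Phi_1$ is a $*$-isomorphism, hence non-degenerate, and Lemma \ref{lem: cocycleisoscoincide} identifies $\Phi$ and $\Psi$ with mutually inverse cocycle conjugacies.

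I expect the main obstacle to be step (3), namely the claim that composing with $\Ad(u)$ on either side stays within the class of inner perturbations. For $\phi\circ\Ad(u)$ with $u\in\cU(\eins+A)$ and $\phi_1$ possibly degenerate, $\phi_1^\dagger(u)$ need only be a unitary in $\bv_1B^{**}\bv_1$-type corners, not in $\cU(\eins+B)$. I would handle this by writing $u=\eins+x$ and setting $u'=\eins+\phi_1(x)$. This $u'$ is a partial unitary, and I would check that $\beta_g(u')\phi_g(a)u'^*=\phi_g(\alpha_g(u)au^*)$ using \ref{item:2weakcocyc} and Proposition \ref{prop:leftinnerproduct}\ref{prop:leftinnerproduct:b}. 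Then $u'\in\cU(\eins+B)$ because $\phi_1$ is a $*$-homomorphism, which makes $u'$ unitary in $B^\dagger$. If this step goes through, the rest is exactly the argument in \cite{GINE23}, which is why I would ultimately cite \cite[Theorem D, Remark 6.6]{GINE23} there.
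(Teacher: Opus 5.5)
Your proposal is correct and takes the same route as the paper. The paper gives no separate argument: it deduces the statement from \cite[Theorem D, Remark 6.6]{GINE23}, after noting that the point-norm topology on $G$-coherent morphisms is Polish and satisfies Elliott's abstract axioms. Your back-and-forth, and your check that $\phi\circ\Ad(u)=\Ad(\eins+\phi_1(x))\circ\phi$ for $u=\eins+x\in\cU(\eins+A)$ (via condition (ii) of Definition \ref{defn:weakcocyclemorphism} with $g=1$ and with $h=1$), make that ``routine'' verification explicit. One small imprecision: right composition with $\Ad(u)$ is not literally an isometry for your weighted metric, but the intertwining only uses left-invariance, contractivity of each $\phi_g$, and the normality relation you prove.
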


\subsection{Classification of morphisms and total K-theory}\label{sec:totalK}

We now recall some results on the classification of $*$-homomorphisms between C$^*$-algebras up to approximate unitary equivalence, as we will require them in later sections.
We refer to \cite{CLASS} and \cite{IZU04II} for more detailed accounts.
Given $n\in \bN$, the $K$-groups of $A$ with coefficients in $n$ are defined by
\[
K_i(A;\bZ_n):=K_i(A\otimes \mathcal{O}_{n+1}),\ i=0,1.
\]
These give rise to functors.\footnote{There are many equivalent ways to define the K-theory groups with coefficients in $n$ see \cite[Appendix A]{CLASS}.}
There are multiple relevant natural transformations between K-theoretic functors. Firstly there is the Künneth short exact sequence
\begin{equation}\label{eqn:Kunneth}
    0\rightarrow K_i(A)_n\xrightarrow{\rho_{A,n}^{i}}K_i(A;\bZ_n)\xrightarrow{\beta_{A,n}^i} {}_n K_{1-i}(A)\rightarrow 0.
\end{equation}
There are also the \emph{Bockstein maps}
\begin{equation}\label{eqn:Bocksteinmaps}
\kappa_{A,n,m}^i: K_i(A;\bZ_m)\rightarrow K_i(A;\bZ_n).
\end{equation}
These natural transformations are compatible with one another, in particular one has a commuting diagram
\begin{equation}\label{diag:compatibility}
\begin{tikzcd}
    K_i(A)_n\ar[r,"\rho^i_{A,n}"]\ar[d,"\times \frac{m}{(m,n)}"]& K_i(A,\bZ_n)\ar[r,"\beta^i_{A,n}"]\ar[d,"\kappa^i_{A,m,n}"]& {}_n K_{1-i}(A)\ar[d,"\times \frac{n}{(m,n)}"]\\
    K_i(A)_m\ar[r,"\rho^i_{A,m}"]& K_i(A;\bZ_m)\ar[r,"\beta^i_{A,m}"]& {}_m K_{1-i}(A) 
\end{tikzcd}
\end{equation}
where we denote $\gcd(m,n)$ by $(m,n)$ for brevity.

\begin{definition}
The \emph{total K-theory} of a C$^*$-algebra $A$, denoted $\underline{K}(A)$ consists of the collection of abelian groups $K_i(A)$ and $K_i(A;\bZ_n)$ for $n\geq 2$.
A \emph{$\Lambda$-morphism} $\underline{\alpha}:\underline{K}(A)\rightarrow \underline{K}(B)$ consists of a family of group homomorphisms
    \[
    \alpha^i:K_i(A)\rightarrow K_i(B)\quad \text{and}\quad \alpha_n^i:K_i(A;\bZ_n)\rightarrow K_i(B;\bZ_n)
    \]
that intertwine the natural morphisms of \eqref{eqn:Kunneth} and the Bockstein maps of \eqref{eqn:Bocksteinmaps}.
We may compose $\Lambda$-morphisms componentwise.
We denote the set of all $\Lambda$-morphisms from $\underline{K}(A)$ to $\underline{K}(B)$ by $\Hom_{\Lambda}(\underline{K}(A),\underline{K}(B))$.
\end{definition}

For any $*$-homomorphism $\varphi:A\rightarrow B$ we denote by $\underline{K}(\varphi)$ the $\Lambda$-morphism induced by $\varphi$.
Then $\underline{K}(\varphi)$ preserves the order structure, i.e., one has $K_0(\varphi)(K_0(A)_+)\subseteq K_0(B)_+$.
When both $A$, $B$ and $\varphi$ are unital, then $\underline{K}(\varphi)$ also preserves the class of the unit, i.e., one has $K_0(\varphi)([\eins_A])=[\eins_B]$.
We recall the classification of morphisms between Kirchberg algebras in the UCT class by total K-theory.
The result follows from combining \cite{DALO96} and \cite[Example 8.4.14]{RO02} (see also \cite[Theorem 8.10]{GA24}).

\begin{theorem} \label{thm:classificationKirchberg}
Let $A$ and $B$ be Kirchberg algebras in the UCT class. Then
    \begin{enumerate}[leftmargin=*,label=\textup{(\arabic*)}]
    \item Suppose $A$ and $B$ are stable.
    Then for any $\Lambda$-morphism $\underline{\alpha}:\underline{K}(A)\rightarrow \underline{K}(B)$ there exists a $*$-homomorphism $\varphi:A\rightarrow B$ such that $\underline{K}(\varphi)=\underline{\alpha}$. Moreover, $\varphi$ is unique up to proper approximate unitary equivalence.\label{item:classKirchbergstable}
    \item Suppose $A$ and $B$ are unital.
    Then for any $\Lambda$-morphism $\underline{\alpha}:\underline{K}(A)\rightarrow \underline{K}(B)$ such that $\alpha^0([1_A])=[1_B]$ there exists a unital $*$-homomorphism $\varphi:A\rightarrow B$ such that $\underline{K}(\varphi)=\underline{\alpha}$. Moreover, $\varphi$ is unique up to approximate unitary equivalence.\label{item:classKirchbergunital}
    \end{enumerate}
\end{theorem}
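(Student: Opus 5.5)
The statement is assembled from known results rather than proved from scratch, so the plan is to reduce both parts to the Dadarlat--Loring universal multicoefficient theorem combined with Kirchberg--Phillips classification of morphisms.

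\textbf{Existence.} For UCT algebras, Dadarlat--Loring \cite{DALO96} shows that the natural map $KK(A,B)\to \Hom_\Lambda(\underline{K}(A),\underline{K}(B))$ is surjective. Its kernel is $\mathrm{Pext}$, the pure extensions arising from the $\mathrm{Ext}$-term of the UCT. So every $\Lambda$-morphism $\underline{\alpha}$ lifts to some $x\in KK(A,B)$.

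In case \ref{item:classKirchbergstable}, Kirchberg's theorem \cite[Example 8.4.14]{RO02} says that for stable Kirchberg $A$ and $B$, every class in $KK(A,B)$ is realised by a $*$-homomorphism $\varphi$. Then $\underline{K}(\varphi)=\underline{\alpha}$.

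In case \ref{item:classKirchbergunital}, the unital version says that every $x\in KK(A,B)$ with $[1_A]\mapsto[1_B]$ in $K_0$ is realised by a unital $*$-homomorphism. The hypothesis $\alpha^0([1_A])=[1_B]$ is exactly this condition, since $x$ and $\underline{\alpha}$ induce the same map on $K_0$.

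\textbf{Uniqueness.} This is the main obstacle, because $KK$ is finer than $\underline{K}$: two homomorphisms with equal total K-theory may have $KK$-classes differing by an element of $\mathrm{Pext}$. I would use that Kirchberg's theorem classifies morphisms up to asymptotic unitary equivalence by $KK$, and hence up to approximate unitary equivalence by the quotient $KL(A,B)=KK(A,B)/\overline{\{0\}}$. For separable $A$ satisfying the UCT, the closure of zero is precisely $\mathrm{Pext}(K_*(A),K_{*+1}(B))$. Dadarlat--Loring then identify $KL(A,B)\cong \Hom_\Lambda(\underline{K}(A),\underline{K}(B))$. Hence $\underline{K}(\varphi)=\underline{K}(\psi)$ gives $KL(\varphi)=KL(\psi)$, which yields $\varphi\approx_{\mathrm{u}}\psi$ in the unital case.

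In the stable case the unitaries should lie in $\cU(\eins+B)$, which gives proper approximate unitary equivalence. I would invoke the form of the stable uniqueness statement recorded in \cite[Theorem 8.10]{GA24}. Alternatively, I would pass to unitisations and use that unitaries in $\cU\cM(B)$ for stable $B$ can be approximated pointwise on finite sets by unitaries in $\cU(\eins+B)$ after conjugation. Since $\cU\cM(B)$ is connected for stable $B$, a standard argument absorbs the multiplier part.
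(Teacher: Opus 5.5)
Your reduction is the same as the paper's, which gives no argument beyond combining \cite{DALO96} with the Kirchberg--Phillips theorem as in \cite[Example 8.4.14]{RO02} (see also \cite[Theorem 8.10]{GA24}). For existence, this means surjectivity of $KK(A,B)\to\Hom_{\Lambda}(\underline{K}(A),\underline{K}(B))$. For uniqueness, it means $KL(A,B)\cong\Hom_{\Lambda}(\underline{K}(A),\underline{K}(B))$ together with classification of maps up to approximate unitary equivalence by $KL$. In the stable case the resulting equivalence must then be upgraded to a proper one, as you indicate.

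There is one point that both you and the statement as printed gloss over. In part \ref{item:classKirchbergstable}, the uniqueness results you invoke only apply to nonzero $*$-homomorphisms; since $B$ is simple, this is the same as full. Without this restriction the claim is false. For instance, take $A=B=\cO_2\otimes\bK$. The identity and the zero map both induce the zero $\Lambda$-morphism, yet $\Ad(u)\circ 0=0$ for every unitary $u$, so they are not approximately unitarily equivalent in any sense.

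So your sentence ``Kirchberg's theorem classifies morphisms up to asymptotic unitary equivalence by $KK$'' must be restricted to nonzero maps, and uniqueness in \ref{item:classKirchbergstable} should be read among nonzero $\varphi$. This costs nothing in the paper's applications. There the theorem is applied to automorphisms, and to pairs such as $\varphi\alpha_g$ versus $\beta_g\varphi$, which are simultaneously zero or nonzero. In part \ref{item:classKirchbergunital} the restriction is automatic, since unital maps into $B\neq 0$ are nonzero.
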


Unital, simple, separable, nuclear, tracially AF C$^*$-algebras (TAF-algebras) in the UCT class were classified by Lin in \cite{LIN01,LIN04}.
This class of C$^*$-algebras also admits an analogous classification of $*$-homomorphisms.

\begin{theorem}[cf.\ {\cite[Theorem 1.1]{DA04}}]\label{thm:TAF}
Let $A$ and $B$ be unital, simple, separable, nuclear TAF-algebras in the UCT class.
Then for any $\Lambda$-morphism $\underline{\alpha}:\underline{K}(A)\rightarrow \underline{K}(B)$ such that $\alpha^0([\eins_A])=[\eins_B]$ and $\alpha^0(K_0(A)_+)\subseteq K_0(B)_+$ there exists a unital $*$-homomorphism $\varphi:A\rightarrow B$ such that $\underline{K}(\varphi)=\underline{\alpha}$. 
Moreover, $\varphi$ is unique up to approximate unitary equivalence.
\end{theorem}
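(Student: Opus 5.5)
The statement is essentially \cite[Theorem 1.1]{DA04}. I would reduce it to three ingredients: the universal multicoefficient theorem, Lin's classification of TAF-algebras \cite{LIN01,LIN04}, and an approximate intertwining argument. The uniqueness part comes first and then feeds into existence.

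\textbf{Translating the invariant.} By the universal multicoefficient theorem of Dadarlat--Loring \cite{DALO96}, for $A$ in the UCT class one has $KL(A,B)\cong\Hom_\Lambda(\underline{K}(A),\underline{K}(B))$. So a $\Lambda$-morphism $\underline{\alpha}$ is the same thing as an element $\kappa\in KL(A,B)$. The hypotheses on $\underline{\alpha}$ then say exactly that $\kappa$ is positive on $K_0(A)_+$ and unit preserving. TAF-algebras have real rank zero and stable rank one, and their tracial state spaces are determined by the ordered group $(K_0,[\eins])$. Hence $\kappa$ automatically induces a compatible affine map on traces, and no further tracial invariant needs to be specified.

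\textbf{Uniqueness.} Let $\varphi,\psi:A\to B$ be unital $*$-homomorphisms with $\underline{K}(\varphi)=\underline{K}(\psi)$. By simplicity of $A$ both maps are injective. I would invoke Lin's uniqueness theorem for unital monomorphisms from a simple nuclear TAF-algebra into a TAF-algebra. Its hypotheses are:
\begin{enumerate}[leftmargin=*,label=\textup{(\roman*)}]
\item equal $KL$-classes;
\item equal induced maps on traces;
\item for non-real-rank-zero sources, control of the rotation/determinant data.
\end{enumerate}
Condition (i) holds by the previous step. Condition (ii) follows from equality on $K_0$ together with real rank zero. Condition (iii) is vacuous here because $A$ has real rank zero. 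The theorem then yields $\varphi\approx_{\mathrm{u}}\psi$.

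\textbf{Existence.} By Lin's classification, $A$ is isomorphic to an inductive limit $\varinjlim (A_k,\iota_k)$ of finite direct sums of matrix algebras over $C(\mathbb{T})$ and dimension-drop algebras, and these models have real rank zero. The plan is:
\begin{enumerate}[leftmargin=*,label=\textup{(\arabic*)}]
\item Restrict $\kappa$ to each building block $A_k$.
\item Realise each restriction, up to a small error on a finite set, by a unital $*$-homomorphism $\varphi_k:A_k\to B$. This uses Dadarlat--Gong type existence results for maps from such blocks into real rank zero, stable rank one algebras; positivity and unit preservation of $\kappa$ are exactly what is needed to realise the $K_0$-data with projections in $B$.
\item Apply the uniqueness step, in its finite-stage ``approximate'' form on the blocks, to correct $\varphi_{k+1}\iota_k$ by a unitary so that it is close to $\varphi_k$ on a prescribed finite set.
\item Take the point-norm limit $\varphi=\lim_k\varphi_k$. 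This is a unital $*$-homomorphism with $\underline{K}(\varphi)=\underline{\alpha}$, since total K-theory is continuous along inductive limits and is determined on finite stages.
\end{enumerate}

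\textbf{Main obstacle.} I expect step (2) to be the hardest: the finite-stage existence, meaning the simultaneous realisation of the $\bZ_n$-coefficient data, including torsion in $K_1$, by honest homomorphisms from dimension-drop blocks. Here I would follow Dadarlat's approach and exploit the fact that $B$ is TAF. A homomorphism can then be built as the direct sum of a small piece that realises the torsion and coefficient data and a large, essentially finite-dimensional piece that fixes positivity and the unit.
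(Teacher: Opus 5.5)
\textbf{Overall.} The paper does not prove this statement; it imports it from Dadarlat \cite[Theorem 1.1]{DA04}, whose uniqueness half is due to Lin. Your opening identification is therefore exactly the paper's treatment. Your uniqueness argument is also sound: the universal multicoefficient theorem passes you to $KL$, Lin's uniqueness theorem applies, and agreement on traces follows from agreement on $K_0$ together with real rank zero of $A$.

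Your existence sketch, however, does not go through as written. The main problem is step (3). The blocks $A_k$ are not of real rank zero (only the limit is), so the argument you used for condition (ii) is unavailable on them. As a result, $\underline{K}$-data does not determine homomorphisms out of $A_k$ up to approximate unitary equivalence. For example, $f\mapsto f(1)\eins_B$ and $f\mapsto f(-1)\eins_B$ are homotopic unital homomorphisms $C(\mathbb{T})\to B$, so they induce the same $\Lambda$-morphism. Yet $\|w(-\eins)w^*-\eins\|=2$ for every unitary $w$. So you cannot correct $\varphi_{k+1}\iota_k$ towards $\varphi_k$ using $\underline{K}$ alone. There are two ways out:
\begin{enumerate}[leftmargin=*,label=\textup{(\alph*)}]
\item control the tracial and spectral data of the $\varphi_k$ directly; or
\item as is standard, never compare maps on the blocks themselves. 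Instead, compose them with completely positive contractive approximate retractions $A\to A_m$, which exist by nuclearity. Then apply Lin's approximate uniqueness theorem for almost multiplicative maps with source $A$. There, real rank zero of $A$ is what reduces the hypothesis to agreement of finitely many $KL$-data.
\end{enumerate}

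The second problem is that your model class is too small. Matrix algebras over $C(\mathbb{T})$ and interval dimension-drop algebras are one-dimensional NCCW complexes. Their $K_0$-groups embed into $K_0$ of a finite-dimensional algebra and are therefore free. Hence every inductive limit of finite direct sums of them has torsion-free $K_0$. Simple TAF algebras in the UCT class can have torsion in $K_0$; the range of the invariant allows, e.g.\ $K_0\cong\mathbb{Q}\oplus\mathbb{Z}/2$ with strict order coming from the first coordinate. In that case the inductive limit models need higher-dimensional spectra, as in Elliott--Gong. For such building blocks, your step (2) is the hard core of the Elliott--Gong and Dadarlat--Gong theory: the finite-stage existence, including realising the torsion and $\mathbb{Z}_n$-coefficient data by genuine homomorphisms. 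Your sketch does not supply it. Short of redoing that work, or reproducing Dadarlat's argument, the right move is the one the paper makes: cite \cite[Theorem 1.1]{DA04}.
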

\begin{remark}\label{rmk:splitting}
By \cite{BO79,BO80} (see also \cite[Appendix A.3]{CLASS}) the Künneth short exact sequences \eqref{eqn:Kunneth} admit splittings that are compatible with the Bockstein operations.
Thus $\underline{K}(A)$ can be recovered from $K_i(A)$ for $i=0,1$.
However, these splittings may not be natural, so $\underline{K}(A)$ usually carries more information than $K_i(A)$ for $i=0,1$ at the level of morphisms. 
\end{remark}

As a consequence of \cite[Proposition 2.10]{DALO96} the functor $\underline{K}(A)$ is completely recovered in nice cases from the groups $K_i(A;\bZ_{p^m})$ and $K_i(A)$ for primes $p$, $m\in \bN$ and the data of the natural maps \eqref{eqn:Kunneth} and \eqref{eqn:Bocksteinmaps} when restricted to the K-theory groups with coefficients of this specific form.
Before we state this more precisely, we introduce some notation.
We denote by $F_P\underline{K}(A)$ the collection of abelian groups $K_i(A)$ and $K_i(A,\bZ_{p^m})$ for $m\in \bN$, $p$ a prime number and $i=0,1$.
We denote by $\Hom_{\Lambda}(F_P\underline{K}(A),F_P\underline{K}(B))$ the abelian group consisting of families of group homomorphisms
 \[
 \alpha^i:K_i(A)\rightarrow K_i(B)\quad \text{and}\quad \alpha_{p^m}^i:K_i(A;\bZ_{p^m})\rightarrow K_i(B;\bZ_{p^m})
 \]
satisfying
 \begin{align*}
     \kappa^i_{B,p^m,p^{m+1}}\circ \alpha^i_{p^{m+1}}&=\alpha^i_{p^{m}}\circ \kappa^i_{A,p^m,p^{m+1}}\\
     \kappa^i_{B,p^{m+1},p^m}\circ \alpha^i_{p^m}&=\alpha^i_{p^{m+1}}\circ \kappa^i_{A,p^{m+1},p^m}\\
     \rho_{B,p^m}^i\circ \alpha^i&=\alpha_{p^m}^i\circ \rho_{A,p^m}^i\\
     \beta_{B,p^m}^i\circ \alpha^i&=\alpha_{p^m}^i\circ \beta_{A,p^m}^i
 \end{align*}
which we can compose componentwise.
For $\alpha\in \Hom_{\Lambda}(\underline{K}(A),\underline{K}(B))$ we denote by $\alpha|_{F_P\underline{K}}$ the induced map in $\Hom_{\Lambda}(F_P\underline{K}(A),F_P\underline{K}(B))$ by forgetting the coefficients which are not a prime power.
When $A$ satisfies the UCT and $B$ is $\sigma$-unital this forgetful map is an isomorphism.
By picking the set theoretic inverses of this forgetful map mentioned above we get the following.

\begin{lemma}[{cf. \cite[Proposition 2.10]{DALO96}}]\label{lem:primesimplification}
Suppose $A$ is a separable C$^*$-algebra satisfying the UCT and $B$ is a $\sigma$-unital C$^*$-algebra.
There exists isomorphisms
\[
\Phi_{A,B}:\Hom_{\Lambda}(F_P\underline{K}(A), F_{P}\underline{K}(B))\rightarrow \Hom_{\Lambda}(\underline{K}(A),\underline{K}(B))
\]
such that 
\[
\Phi_{A,B}(\alpha|_{F_P\underline{K}})=\alpha,\quad \alpha\in \Hom_{\Lambda}(\underline{K}(A),\underline{K}(B)).
\]
Furthermore, if $B$ is also separable and satisfies the UCT and $C$ is another $\sigma$-unital C$^*$-algebra, then one has for all $\phi\in \Hom_{\Lambda}(F_P\underline{K}(A), F_{P}\underline{K}(B))$ and $\psi\in \Hom_{\Lambda}(F_P\underline{K}(B), F_{P}\underline{K}(C))$
that
\[
\Phi_{A,C}(\psi\circ\phi)=\Phi_{B,C}(\psi)\circ \Phi_{A,B}(\phi).
\]
\end{lemma}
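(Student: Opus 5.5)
The plan is to show that the forgetful map $\alpha\mapsto\alpha|_{F_P\underline{K}}$ from $\Hom_{\Lambda}(\underline{K}(A),\underline{K}(B))$ to $\Hom_{\Lambda}(F_P\underline{K}(A),F_P\underline{K}(B))$ is a bijection. Then we define $\Phi_{A,B}$ as its inverse. The required identity $\Phi_{A,B}(\alpha|_{F_P\underline{K}})=\alpha$ then holds by construction, and $\Phi_{A,B}$ is a group isomorphism because restriction is additive.

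To prove bijectivity I would follow \cite[Proposition 2.10]{DALO96}. For arbitrary $n\geq2$, factor $n=\prod_j p_j^{m_j}$ into prime powers. By the Chinese remainder theorem, $\mathcal{O}_{n+1}$ is $KK$-equivalent to $\bigoplus_j\mathcal{O}_{p_j^{m_j}+1}$. Both are UCT algebras with $K_0$ equal to $\bZ_n\cong\bigoplus_j\bZ_{p_j^{m_j}}$ and $K_1=0$. This gives natural isomorphisms $K_i(A;\bZ_n)\cong\bigoplus_j K_i(A;\bZ_{p_j^{m_j}})$, and these are implemented by the Bockstein maps $\kappa$ between $n$ and the $p_j^{m_j}$ (cf.\ the diagram \eqref{diag:compatibility}). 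Since $\Lambda$-morphisms commute with Bockstein maps, any $\alpha$ is determined on $K_i(A;\bZ_n)$ by its prime-power components, which gives injectivity.

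For surjectivity, given $\phi$ on $F_P\underline{K}$, I would define $\alpha_n^i$ by transporting $\bigoplus_j\phi^i_{p_j^{m_j}}$ along these decompositions. It then remains to check that the $\kappa^i_{n,m}$ and the maps $\rho$, $\beta$ are intertwined. The $\rho$ and $\beta$ maps decompose componentwise, so they are intertwined because $\phi$ respects $\rho_{p^m}$ and $\beta_{p^m}$. The Bockstein maps between coprime-part components factor through the relations among $\bZ_{p^a}\to\bZ_{p^b}$, which $\phi$ respects by hypothesis. Cross-prime components vanish, since a map between $\bZ_{p^a}$-modules and $\bZ_{q^b}$-modules with $p\neq q$ is zero.

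The main obstacle is that some Bockstein maps, such as $\kappa$ for non-coprime mixed indices, are not visibly generated by the prime-power ones. For these I would use the UCT for $A$: $\Hom_\Lambda(\underline{K}(A),\underline{K}(B))\cong KK(A,B)/\text{(pure extensions)}$ over \cite{DALO96}. This hypothesis is exactly why the UCT is needed.

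Finally, functoriality on composition follows from uniqueness of inverses. We have $\Phi_{B,C}(\psi)\circ\Phi_{A,B}(\phi)$ restricting to $\psi\circ\phi$ on $F_P\underline{K}$, since restriction commutes with composition. Injectivity of restriction for $(A,C)$ then forces this composite to equal $\Phi_{A,C}(\psi\circ\phi)$. The hypotheses that $B$ is separable and UCT ensure $\Phi_{B,C}$ is defined.
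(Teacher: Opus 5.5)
Your reduction is the paper's. There, $\Phi_{A,B}$ is simply the inverse of the forgetful map $\alpha\mapsto\alpha|_{F_P\underline{K}}$, and its bijectivity is taken from \cite[Proposition 2.10]{DALO96}. Your proof of the composition rule (restriction commutes with composition, then injectivity of restriction for the pair $(A,C)$) is exactly what makes these inverses compatible. Your injectivity argument is also sound. By the five lemma, $\bigoplus_j\kappa_{p_j^{m_j},n}$ is an isomorphism onto $\bigoplus_j K_i(B;\bZ_{p_j^{m_j}})$ for every $B$: on the K\"unneth ends it is reduction mod $p_j^{m_j}$, resp.\ multiplication by the units $n/p_j^{m_j}$ on the primary parts.

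Where your sketch goes beyond the citation, the step you flag as the main obstacle is left open, and your remedy does not work. The isomorphism $\Hom_\Lambda(\underline{K}(A),\underline{K}(B))\cong KK(A,B)/\mathrm{Pext}$ only describes families already known to be $\Lambda$-morphisms, and that is what must be shown for your transported family. To use it you would first have to realise the $F_P$-data by a $KK$-class, i.e.\ prove a multicoefficient theorem for $F_P\underline{K}$, which is essentially the lemma itself.

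The missing idea is that every Bockstein map is a Kasparov product with an element of $KK(\cO_{n+1},\cO_{m+1})$. These bootstrap algebras have $K_1=0$, so the $\mathrm{Ext}$-term of their UCT vanishes and $KK(\cO_{n+1},\cO_{m+1})\cong\Hom(\bZ_n,\bZ_m)\cong\bZ_{(m,n)}$, generated by $\kappa_{m,n}$. Hence any composite of Bockstein maps from $n$ to $m$ is an integer multiple of $\kappa_{m,n}$, detectable on $K_0(\bC;\bZ_n)=\bZ_n$, uniformly in the algebra. This gives three things:
\begin{enumerate}
\item $\kappa_{p^a,n}\kappa_{n,p^a}=\frac{n}{p^a}\cdot\id$, so the inverse of your decomposition is given componentwise by unit multiples of $\kappa_{n,p^a}$.
\item After conjugating by the decompositions, $\kappa_{m,n}$ has vanishing cross-prime entries, and its same-prime entries are integer multiples of $\kappa_{p^b,p^a}$.
\item Each $\kappa_{p^b,p^a}$ is a composite of the adjacent maps $\kappa_{p^{k+1},p^k}$, $\kappa_{p^k,p^{k+1}}$. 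You also need this, since $F_P$-morphisms are only required to commute with adjacent Bockstein maps.
\end{enumerate}
With these, your transported family commutes with every $\kappa_{m,n}$. Only the UCT for the coefficient algebras is used here, not the UCT for $A$, so your claim that this is exactly where the UCT hypothesis enters is not right.
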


To detect approximate unitary equivalence classes among morphisms between more general simple, nuclear C$^*$-algebras, one needs to enlarge the proposed classification invariant.
Recall from \cite{CLASS} the functor $\underline{K}T_u$ whose domain category is the category of unital C$^*$-algebras.
For a unital C$^*$-algebra $A$, $\underline{K}T_u(A)$ consists of $\underline{K}(A)$, the trace space of $A$ and the Hausdorffised unitary algebraic $K_1$-group (see \cite[Definition 2.6]{CLASS}).
A morphism $\underline{K}T_u(A)\rightarrow \underline{K}T_u(B)$ consists of a family of compatible maps between each of the components.
This is spelled out in \cite[Definition 3.5]{CLASS}.
We note that any unital $*$-homomorphism $\varphi:A\rightarrow B$ induces a morphism $\underline{K}T_u(\varphi):\underline{K}T_u(A)\rightarrow \underline{K}T_u(B)$, which assembles into a functor.
When $A$ is a unital Kirchberg algebras then $\underline{K}T_u(A)$ is simply $\underline{K}(A)$ and a $\underline{K}T_u$ morphism coincides with the notion of a $\Lambda$-morphism.
We may now state the classification of morphisms between unital, simple, separable, nuclear, $\Z$-stable C$^*$-algebras that satisfy the UCT. 

\begin{theorem}[cf.\ {\cite[Theorem B]{CLASS}}] \label{thm:KTU}
Let $A$ and $B$ be unital, simple, separable, nuclear $\Z$-stable C$^*$-algebras in the UCT class. Then:
\begin{enumerate}[leftmargin=*,label=\textup{(\arabic*)}]
\item For any two unital $*$-homomorphisms $\varphi,\psi:A\rightarrow B$, one has $\varphi\approx_{\mathrm{u}}\psi$ if and only if $\underline{K}T_u(\psi)=\underline{K}T_u(\varphi)$. \label{uniquenessKTU}
\item Given any morphism $\Phi:\underline{K}T_u(A)\rightarrow \underline{K}T_u(B)$, there exists a unital $*$-homomorphism $\varphi:A\rightarrow B$ with $\underline{K}T_u(\varphi)=\Phi$.
Moreover, if $\Phi$ is an isomorphism, then $\varphi$ can be chosen to be an isomorphism. \label{existenceKTU}
\end{enumerate}
\end{theorem}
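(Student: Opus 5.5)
This statement is quoted from \cite[Theorem B]{CLASS}, so there is nothing new to prove here; the plan is to check that the hypotheses line up and then reduce to the source result. The setting matches exactly: unital, simple, separable, nuclear, $\Z$-stable C$^*$-algebras in the UCT class, with the invariant $\underline{K}T_u$ consisting of total K-theory, traces and the Hausdorffised unitary algebraic $K_1$. The argument in \cite{CLASS} itself goes through three stages, and I would follow the same order if I had to reconstruct it.

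For the uniqueness part \ref{uniquenessKTU}, the forward direction holds because $\underline{K}T_u$ is a functor. Approximately inner automorphisms act trivially on $\underline{K}$, on traces, and on $\overline{K}_1^{\mathrm{alg}}$, since each of these components is continuous in point-norm. For the converse, I would run the classification via the trace-kernel extension $0\to J_B\to B_\omega\to B^\omega\to 0$. The steps are as follows.
\begin{enumerate}
\item Use nuclearity and the tracial data to show that $\varphi$ and $\psi$ become unitarily equivalent in the tracial ultrapower $B^\omega$.
\item Lift this equivalence to $B_\omega$.
\item Control the remaining obstruction in the trace-kernel ideal $J_B$ using KK-uniqueness (this is where the UCT enters, through $\underline{K}$ determining KK-classes up to the relevant quotient).
\item Use the de la Harpe--Skandalis determinant data carried by $\overline{K}_1^{\mathrm{alg}}$ to remove the rotation-type obstruction.
\end{enumerate}

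For existence \ref{existenceKTU}, I would proceed in three steps. First, realise the tracial part by a map into $B^\omega$, using nuclear $\Z$-stable tracial approximation. Second, lift it to $B_\omega$ while matching the total K-theory via the UCT, which gives a KK-element with the prescribed $\underline{K}$. Third, adjust the lift by the $\overline{K}_1^{\mathrm{alg}}$ data. A reindexing argument then produces a genuine map $A\to B$. If $\Phi$ is an isomorphism, apply existence in both directions and combine with uniqueness. This gives $\psi\varphi\approx_{\mathrm{u}}\id_A$ and $\varphi\psi\approx_{\mathrm{u}}\id_B$, and the classical two-sided Elliott intertwining (the trivial-group case of Theorem \ref{thm: Elliott intertwining}) upgrades $\varphi$ to an isomorphism.

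The main obstacle is the lifting from the tracial ultrapower to the norm ultrapower with exact control of K-theory and the unitary $K_1$ data, i.e.\ the classification of lifts through the trace-kernel extension. Within this paper I would not reprove it; I would cite \cite[Theorem B]{CLASS} directly.
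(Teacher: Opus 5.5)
Your proposal is correct and matches the paper, which quotes \cite[Theorem B]{CLASS} without proof. Your sketch of the trace-kernel extension argument is extra context that the paper does not need. Since the statement is an adapted special case (hence the ``cf.''), your derivation of the isomorphism clause is a valid justification: existence in both directions, then uniqueness, then Elliott intertwining.
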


\section{Equivariant (approximate) Murray--von Neumann equivalence} \label{sec:MvNequivalence}

In \cite{GA20}, Gabe introduces the notions of approximate and asymptotic Murray von Neumann equivalence for $*$-homomorphisms, which are crucial for his later classification of $\cO_2$-stable $*$-homomorphisms.
In this section, we introduce an equivariant version of approximate and asymptotic Murray von Neumann equivalence for $G$-coherent morphisms.
In analogy with \cite[Proposition 3.13]{GA20}, we show that when the target C$^*$-algebra is stable, these equivalence relations coincide with proper appproximate and proper asymptotic unitary equivalence.

\begin{definition}\label{defn:MvN}
Let $\omega\in Z^3(G,\bT)$ and let $\phi,\psi:(A,\alpha,\fu)\rightarrow (B,\beta,\fv)$ be $G$-coherent morphisms between $\omega$-anomalous actions of a countable discrete group $G$.
Then $\phi$ and $\psi$ are called \emph{approximately Murray von Neumann equivalent}, denoted $\phi\approx_{\mathrm{MvN}}^G \psi$, if for all $\varepsilon>0$, finite sets $K\subset G$, $F\subset A$ there exists $v\in B$ such that
    \[ 
    \lVert\beta_g(v^*)\phi_g(a)v-\psi_g(a)\rVert< \varepsilon,\quad \lVert\beta_g(v)\psi_g(a)v^*-\phi_g(a)\rVert< \varepsilon,
    \]
for all $g\in K$ and $a\in F$.

Suppose that $A$ is separable.
Then $\phi$ and $\psi$ are called \emph{asymptotically Murray von Neumann equivalent} if there exists a continuous path $v:[0,\infty)\rightarrow B$ such that
    \[
    \lim_{t\rightarrow\infty}\beta_g(v_t^*)\phi_g(a)v_t=\psi_g(a),\quad \lim_{t\rightarrow\infty}\beta_g(v_t)\psi_g(a)v_t^*=\phi_g(a)
    \]
for all $g\in G$ and $a\in A$.
\end{definition}

It is straightforward to see that our notions above coincide with \cite[Definition 3.4]{GA20} when $G$ is the trivial group.
Following an analogous argument to \cite[Lemma 3.5]{GA20}, we can always take $v$ or the path $(v_t)_{t\geq 0}$ in \cref{defn:MvN} to be contractive.
We will often assume that $A$ is separable in which case approximate Murray von Neumann equivalence coincides with the existence of a sequence $v_n\in B$ such that
\[
\lim_{n\rightarrow\infty}\beta_g(v_n^*)\phi_g(a)v_n=\psi_g(a),\quad \lim_{n\rightarrow\infty}\beta_g(v_n)\psi_g(a)v_n^*=\phi_g(a),
\]
for all $a\in A$ and $g\in G$.
\begin{remark}\label{rmk:unitalapproxmvn}
    Let $A$ and $B$ be unital, separable C$^*$-algebras and $\phi,\psi:(A,\alpha,\fu)\rightarrow (B,\beta,\fv)$ $G$-coherent morphisms between $\omega$-anomalous actions for some countable discrete group $G$ and $\omega\in Z^3(G,\bT)$. If $\phi_1$ and $\psi_1$ are unital maps, then $\phi\approx_{\mathrm{MvN}}^G\psi$ if and only if $\phi\approx_{\mathrm{u}}\psi$. Indeed, for the non-tivial direction, suppose that $\phi\approx_{\mathrm{MvN}}^G\psi$ and $(v_n)_{n\in \bN}\in B$ such that
    \[
\lim_{n\rightarrow\infty}\beta_g(v_n^*)\phi_g(a)v_n=\psi_g(a),\quad \lim_{n\rightarrow\infty}\beta_g(v_n)\psi_g(a)v_n^*=\phi_g(a).
\]
Letting $g=1$ and $a=\eins$ above we have that
\[
\lim_{n\rightarrow \infty}v_n^*v_n=\eins=\lim_{n\rightarrow \infty}v_nv_n^*.
\]
Thus we may find a sequence $w_n\in \cU(B)$ such that
\[
\lim_{n\rightarrow\infty}\lVert v_n-w_n\rVert=0.
\]
this sequence $w_n$ witnesses $\phi\approx_{\mathrm{u}}\psi$.
\end{remark}
Approximate or asymptotic Murray von Neumann equivalence is in practice much easier to access than proper approximate or asymptotic unitary equivalence respectively.
It will be important that under stability of the codomain algebra, these notions actually coincide.
Before we show this we need a technical lemma that will be useful to us in various instances.
We start with two preparatory lemmas.

\begin{lemma}\label{lem:unitaryeq}
Let $\cH$ be a separable Hilbert space and $S_1,S_2\in \cB(\cH)$ isometries such that $\eins-S_1S_1^*$ and $\eins-S_2S_2^*$ are infinite projections.
Then there exists a unitary $U\in \cB(\cH)$ such that $US_1=S_2$. 
\end{lemma}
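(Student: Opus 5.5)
The idea is to build $U$ piecewise, separately on the range of $S_1$ and on its orthogonal complement. Put $P_i=S_iS_i^*$ for $i=1,2$. Since $S_i$ is an isometry, $P_i$ is the projection onto $S_i\cH$, and $S_i$ is a unitary from $\cH$ onto $P_i\cH$. The operator $W:=S_2S_1^*$ is therefore a partial isometry with $W^*W=P_1$ and $WW^*=P_2$; it maps $P_1\cH$ isometrically onto $P_2\cH$ and satisfies $WS_1=S_2S_1^*S_1=S_2$.

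Next we treat the complements $Q_i=\eins-P_i$. By hypothesis both are infinite projections. In $\cB(\cH)$ a projection is infinite exactly when its range is infinite-dimensional. Because $\cH$ is separable, $Q_1\cH$ and $Q_2\cH$ are then both separable infinite-dimensional Hilbert spaces, so each has a countably infinite orthonormal basis. Matching these bases gives a partial isometry $V$ with $V^*V=Q_1$ and $VV^*=Q_2$.

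Finally we set $U=W+V$ and check three things.
\begin{itemize}
\item $U^*U=\eins$: the cross terms $W^*V$ and $V^*W$ vanish, since $W^*V=W^*P_2Q_2V=0$ (and similarly for the other). Hence $U^*U=P_1+Q_1=\eins$.
\item $UU^*=\eins$: in the same way, $UU^*=P_2+Q_2=\eins$, so $U$ is unitary.
\item $US_1=S_2$: we have $VS_1=VQ_1S_1=0$ because $Q_1S_1=0$. Therefore $US_1=WS_1=S_2$.
\end{itemize}

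There is essentially no obstacle here. The only point that needs care is identifying infinite projections in $\cB(\cH)$ with infinite-dimensional ranges and then using separability to get equal (countable) dimensions. This is exactly where the infiniteness assumption is used: if $Q_1$ and $Q_2$ had different dimensions, no unitary $U$ with $US_1=S_2$ could exist.
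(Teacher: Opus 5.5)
Your proof is correct and is essentially the paper's argument. The paper takes isometries $R_j$ with $R_jR_j^*=\eins-S_jS_j^*$ and sets $U=S_2S_1^*+R_2R_1^*$, so $R_2R_1^*$ plays exactly the role of your partial isometry $V$, which you obtain by matching orthonormal bases of the two infinite-dimensional complements.
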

\begin{proof}
As $\eins-S_jS_j^*$ are infinite for $j=1,2$, there exist isometries $R_j\in \cB(\cH)$ such that
\[
\eins-S_jS_j^*=R_jR_j^*,\quad j=1,2.
\]
Then $U=S_2S_1^*+R_2R_1^{*}$ is a unitary such that $US_1=S_2$.
\end{proof}
\begin{lemma}\label{lem:pathsofisometries}
Suppose $B$ is stable.
Then there is a norm-continuous path of isometries $s_t^{(j)}:[0,\infty)\rightarrow \cM(B)$ for $j=0,1$ such that
\[
\forall t\in [0,\infty):\ s_t^{(0)}{s_t^{(0)*}}+s_t^{(1)}{s_t^{(1)*}}=\eins,\quad\text{and}\quad  \lim_{t\rightarrow \infty}s_t^{(0)}= \eins\quad \text{strictly}.
\]
\end{lemma}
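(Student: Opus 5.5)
Since $B$ is stable, we identify $B\cong B\otimes\mathcal{K}$, where $\mathcal{K}=\mathcal{K}(\cH)$ for a separable infinite-dimensional Hilbert space $\cH$. Under this identification $\eins_{\cM(B)}\otimes\cB(\cH)\subseteq\cM(B\otimes\mathcal{K})$ unitally. It therefore suffices to produce the paths inside $\cB(\cH)$ and then tensor with $\eins_{\cM(B)}$. Strict convergence to $\eins$ in $\cM(B\otimes\mathcal{K})$ follows from strong-$*$ convergence to $\eins$ in $\cB(\cH)$ of uniformly bounded operators: for elementary tensors $b\otimes k$ we have $(\eins\otimes S_t)(b\otimes k)=b\otimes S_tk\to b\otimes k$ in norm, because $k$ is compact and strong convergence on bounded sets is uniform on compacts. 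The same holds on the right side using $S_t^*$. Density and uniform boundedness then give strict convergence on all of $B\otimes\mathcal{K}$.

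So the task reduces to the following: find norm-continuous paths of isometries $S^{(0)}_t,S^{(1)}_t\in\cB(\cH)$ with $S^{(0)}_tS^{(0)*}_t+S^{(1)}_tS^{(1)*}_t=\eins$ and $S^{(0)}_t\to\eins$ in the strong-$*$ topology. Fix an orthonormal basis $(\xi_k)_{k\geq1}$, split $\bN$ into the first $n$ indices and the rest, and further split the remaining indices into two infinite sets. For each $n$, define a Cuntz pair $(T^{(0)}_n,T^{(1)}_n)$ of isometries with $T^{(0)}_n\xi_k=\xi_k$ for $k\leq n$. On the complement of $\mathrm{span}\{\xi_1,\dots,\xi_n\}$, choose $T^{(0)}_n$ and $T^{(1)}_n$ to be isometries onto two complementary infinite-dimensional subspaces. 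Since $T^{(0)}_n$ fixes $\xi_1,\dots,\xi_n$ and is isometric, it converges strongly to $\eins$. For strong-$*$ convergence, note that $T^{(0)*}_n\xi_k=\xi_k$ for $k\leq n$, since the range of $T_n^{(0)}$ contains $\xi_k$ and $T_n^{(0)}$ acts as the identity there. Hence the adjoints also converge strongly to $\eins$.

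The key step is connecting $(T^{(0)}_n,T^{(1)}_n)$ to $(T^{(0)}_{n+1},T^{(1)}_{n+1})$ by a norm-continuous path of Cuntz pairs that stays strong-$*$ close to the identity in its first component. We use Lemma \ref{lem:unitaryeq} to find a unitary $U_n$ with $U_nT^{(0)}_n=T^{(0)}_{n+1}$. Both complements $\eins-T^{(0)}_jT^{(0)*}_j=T^{(1)}_jT^{(1)*}_j$ are infinite projections, so the lemma applies. We can arrange $U_n$ to fix $\xi_1,\dots,\xi_n$ by working on the orthogonal complement of their span, which is invariant under everything in sight. Concretely, we apply Lemma \ref{lem:unitaryeq} there and choose the corresponding $R_j$ so that $U_nT_n^{(1)}=T_{n+1}^{(1)}$ as well. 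This is possible because the formula $U=S_2S_1^*+R_2R_1^*$ allows $R_j=T_j^{(1)}$.

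Since the unitary group of $\cB(\eins-P_n)$, where $P_n$ is the projection onto $\mathrm{span}\{\xi_1,\dots,\xi_n\}$, is norm-connected (Kuiper, or simply the Borel functional calculus logarithm), we choose a norm-continuous path $U_n(s)$, $s\in[0,1]$, of unitaries fixing $\xi_1,\dots,\xi_n$, from $\eins$ to $U_n$. We then set $S^{(j)}_{n+s}=U_n(s)T^{(j)}_n$ for $s\in[0,1]$.

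This gives norm-continuous paths of Cuntz pairs that agree at the integer endpoints. For $t\in[n,n+1]$, $S^{(0)}_t$ fixes $\xi_k$ for all $k\leq n$. The same holds for its adjoint, since $S^{(0)}_t\xi_k=\xi_k$ together with the isometry property forces $S^{(0)*}_t\xi_k=\xi_k$. Therefore $S^{(0)}_t\to\eins$ in the strong-$*$ topology, and tensoring with $\eins_{\cM(B)}$ as above finishes the proof.

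The main obstacle is the norm-continuity and compatibility in the connecting step. It is handled by the uniformity in Lemma \ref{lem:unitaryeq} and by restricting to the complement of $P_n$.
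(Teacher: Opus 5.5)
Your proposal is correct and follows essentially the same route as the paper. You reduce to $\cB(\cH)=\cM(\mathbb{K})$, build the isometries inductively on intervals $[n,n+1]$ using unitaries that fix $\xi_1,\dots,\xi_n$ (coming from Lemma~\ref{lem:unitaryeq} on the complement and norm-connectedness of the unitary group), and read off strict convergence from $S^{(0)}_t\xi_k=\xi_k=S^{(0)*}_t\xi_k$ for $t\geq k$. The differences are cosmetic: you prescribe both isometries at every integer and use $U_n=T^{(0)}_{n+1}T^{(0)*}_n+T^{(1)}_{n+1}T^{(1)*}_n$, which fixes $\xi_1,\dots,\xi_n$ because $T^{(1)*}_n\xi_k=0$, whereas the paper specifies only $s^{(0)}_n$ and carries $s^{(1)}$ along by the same unitaries.
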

\begin{proof}
Let $\cH$ be a separable infinite-dimensional Hilbert space.
By stability, we have a unital and strictly continuous inclusion $\cB(\cH)=\cM(\mathbb{K})\subseteq\cM(B)$, so we may as well just consider the case $B=\mathbb{K}$.
For a chosen orthonormal basis $\{\delta_n\}_{n\in\mathbb N}$ of $\cH$, we shall subsequently identify $\mathbb{C}^\ell\cong\operatorname{span}\{ \delta_n \}_{n=1}^\ell\subset\cH$ for every $\ell\geq 1$.
Let us denote by $\{ e_{k,\ell} : k,\ell\geq 1\}$ the set of matrix units associated to this orthonormal basis.

We construct the paths $s_t^{(j)}$ for $j=0,1$ inductively on intervals $[n,n+1]$ for $n\geq 2$ and will obtain the desired paths defined on $[2,\infty)$ for notational convenience.
For the induction step, let 
\[
s_2^{(0)}=e_{1,1}+e_{2,2}+\sum_{k=3}^\infty e_{2k,k}\quad\text{and}\quad r_2=\sum_{k=3}^\infty e_{2k,k}.
\] 
As $\eins-s_2^{(0)}{s_2^{(0)*}}$ is an infinite projection in $\cB(\cH)$, it is Murray von Neumann equivalent to $\eins$.
Thus there exists an isometry $s_2^{(1)}$ such that $s_2^{(1)}{s_2^{(1)*}}=\eins-s_2^{(0)}{s_2^{(0)*}}$.
Let
\[
t_2=\sum_{k=3}^\infty e_{2k-3,k}=e_{3,3}+\sum_{k=4}^\infty e_{2k-3,k}.
\]
Then $r_2$ and $t_2$ are isometries in $\cB(\cH\ominus\bC^2)$ such that $\eins-r_2r_2^*$ and $\eins-t_2t_2^*$ are infinite.
By \cref{lem:unitaryeq}, there exists a unitary $V_2\in \cB(\cH\ominus\bC^2)$ such that $V_2r_2=t_2$.
Moreover, as $\cU\cB(\cH\ominus\bC^2)$ is path-connected \cite{CUHI87}, we pick a norm-continuous path $V_2: [0,1]\to \cU\cB(\cH\ominus\bC^2)$ with $V_2(0)=\eins$ and $V_2(1)=V_2$.
Let $U_2(t)=\id_{\bC^2}\oplus V_2(t)$ and 
\[
s_t^{(j)}= U_2(t-2)s_2^{(j)},\quad j=0,1,\ t\in [2,3].
\]
We can easily see that $s_t^{(0)}{s_t^{(0)*}}+s_t^{(1)}{s_t^{(1)*}}=\eins$ holds for all $t\in [2,3]$, and that $s_3^{(0)}=e_{1,1}+e_{2,2}+e_{3,3}+\sum_{k\geq 4} e_{2k-3,k}$.

As the induction hypothesis, assume $n\geq 3$ is given and the elements $s_t^{(j)}$ for $j=0,1$ have been defined for $t\in [2,n]$, and that we have
\[
s_n^{(0)}=e_{1,1}+e_{2,2}+\ldots e_{n,n}+\sum_{k=n+1}^\infty e_{2k-n,k}.
\]
For the induction step, consider the isometries $r_n=\sum_{k\geq n+1} e_{2k-n,k}$ and $t_n=\sum_{k\geq n+1} e_{2k-(n+1),k}$ in $\cB(\cH\ominus\bC^n)$.
Applying \cref{lem:unitaryeq}, we may find a unitary $V_n\in \cU\cB(\cH\ominus\bC^n)$ such that $V_nr_n=t_n$.
Letting $V_n: [0,1]\to \cU\cB(\cH\ominus\bC^n)$ be a continuous path such that $V_n(0)=\eins$ and $V_n(1)=V_n$, set $U_n(t)=\id_{\bC^n}\oplus V_n(t)$ and
\[
s_t^{(j)}=U_n(t-n)s_n^{(j)},\ j=0,1,\ t\in [n,n+1].
\]
This construction yields two paths of isometries $s^{(j)}:[2,\infty)\rightarrow\cB(\cH)$ satisfying $s_t^{(0)}{s_t^{(0)*}}+s_t^{(1)}{s_t^{(1)*}}=\eins$ for all $t\geq 2$.
Furthermore, we can read off the construction that $s_t^{(0)}\delta_k=\delta_k$ for every $k\in\mathbb N$ and $t\geq k$.
This implies $s_t^{(0)}\rightarrow \eins$ in the strict topology and finishes the proof.
\end{proof}

\begin{lemma}\label{lem:approxleftmult}
Let $B$ be a stable \cstar-algebra and $v:[0,\infty)\rightarrow B_1$ a continuous path of contractions.
Then there exists a continuous unitary path $w:[0,\infty)\rightarrow \cU(\eins+B)$ with $w_0=\eins$ such that
\[
\limsup_{t\rightarrow \infty}\lVert (w_t-v_t)e\rVert^2 \leq \limsup_{t\rightarrow\infty} \lVert e^*(\eins-v_t^*v_t)e\rVert
\]
for all $e\in B$.
\end{lemma}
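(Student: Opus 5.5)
We will use the paths of isometries from \cref{lem:pathsofisometries} to dilate each contraction $v_t$ to a unitary. Fix paths $s^{(0)}_t,s^{(1)}_t\in\cM(B)$ as there, with $s_t^{(0)}s_t^{(0)*}+s_t^{(1)}s_t^{(1)*}=\eins$ and $s_t^{(0)}\to\eins$ strictly. The basic idea is the standard unitary dilation of a contraction in $M_2$, compressed back into $\cM(B)$ via the Cuntz pair $(s_t^{(0)},s_t^{(1)})$. Concretely, set
\[
W_t=\begin{pmatrix} v_t & -(\eins-v_tv_t^*)^{1/2}\\ (\eins-v_t^*v_t)^{1/2} & v_t^*\end{pmatrix}\in \cU(M_2(B^\dagger)),
\]
which is unitary by the usual Halmos computation and depends continuously on $t$. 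Then define
\[
w'_t=\sum_{i,j\in\{0,1\}} s^{(i)}_t (W_t)_{ij} s^{(j)*}_t .
\]
This is a unitary in $\cM(B)$, and since $W_t-\mathrm{diag}(0,\eins)\in M_2(B)$ and $s^{(1)}_ts^{(1)*}_t=\eins-s_t^{(0)}s_t^{(0)*}$, one checks $w'_t-\eins\in B$. We also have to compensate for the identity entry: $(W_t)_{11}=v_t^*$, and $v_t^*-\eins\in B^\dagger$, so that $w'_t\in\cU(\eins+B)$. Continuity in $t$ is clear since all ingredients are norm-continuous.

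For the estimate, compute $(w'_t-v_t)e$ for $e\in B$. Since $s_t^{(0)}\to\eins$ strictly, we have $s^{(0)*}_te-e\to0$ and $s^{(1)*}_te\to0$ in norm. Hence, asymptotically,
\[
w'_te\approx s^{(0)}_tv_te+s^{(1)}_t(\eins-v_t^*v_t)^{1/2}e .
\]
Moreover $s^{(0)}_tv_te-v_te\to0$: this is not immediate because $v_te$ varies with $t$, but $v_t$ is bounded, and the strict convergence can be taken uniform on the compact set? It is not compact in general. I will handle this by instead using $s^{(0)*}$ on the left: compute $\|(w'_t-v_t)e\|$ after noting $w'_t-v_t$ … Alternatively, I would first arrange, by reparametrising the isometry path (slowing it down relative to $t$; a subsequence/time-change argument using separability of $C^*(v_t e)$ on compact time intervals), that $s^{(0)}_t$ acts approximately as the identity on $v_t e$ for the relevant $e$. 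The cleaner fix is to replace $v_t$ by $s^{(0)}_t v_t s^{(0)*}_t$ from the outset; this tends to $v_t$ after multiplying by $e$ on the right only up to the same issue. I expect this uniformity to be the main obstacle. I would try to resolve it by choosing the time change inductively on intervals $[n,n+1]$, using compactness of $\{v_t:t\in[0,n+1]\}$ together with a countable dense set of $e$'s, exactly as in quasicentral approximate unit arguments.

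Granting this, the two summands are asymptotically orthogonal, with ranges under $s^{(0)}_t$ and $s^{(1)}_t$. Hence
\[
\|(w'_t-v_t)e\|^2\approx\|s^{(1)}_t(\eins-v_t^*v_t)^{1/2}e\|^2=\|e^*(\eins-v_t^*v_t)e\|,
\]
which gives the limsup inequality. Finally, to get $w_0=\eins$, I would connect $\eins$ to $w'_0$ within $\cU(\eins+B)$. This is possible because for stable $B$ the unitary group $\cU(\eins+B)$ is path-connected up to $K_1$, and here $w'_0$ is a dilation built from the contraction $v_0$, so it is homotopic to $\eins$ via the path $v_0\mapsto rv_0$ for $r\in[0,1]$. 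I would then prepend this path on an initial time segment and shift the time parameter, which does not affect the limsup.
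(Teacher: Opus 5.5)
\textbf{The gap: your path is not in $\cU(\eins+B)$.} The claim that $w'_t\in\cU(\eins+B)$ is false. The scalar part of $W_t$ is $\begin{pmatrix}0&-\eins\\ \eins&0\end{pmatrix}$, not $\mathrm{diag}(0,\eins)$, because $(\eins-v_t^*v_t)^{1/2}$ and $(\eins-v_tv_t^*)^{1/2}$ lie in $\eins+B$. More decisively, since $s^{(0)*}_ts^{(0)}_t=\eins$ and $s^{(0)*}_ts^{(1)}_t=0$, you have $s^{(0)*}_tw'_ts^{(0)}_t=(W_t)_{00}=v_t$. So $w'_t-\eins\in B$ would force $\eins=v_t-s^{(0)*}_t(w'_t-\eins)s^{(0)}_t\in B$, which is impossible for a nonzero stable \cstar-algebra.

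The same computation shows that no unitary that compresses exactly to $v_t$ under an isometry of $\cM(B)$ can lie in $\eins+B$. So this cannot be repaired by tweaking the dilation: any unitary in $\cU(\eins+B)$ can only reproduce $v_t$ asymptotically on $B$.

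Your argument for $w_0=\eins$ fails for the same reason. The homotopy $r\mapsto(\text{dilation of } rv_0)$ ends at the flip $s^{(1)}_0s^{(0)*}_0-s^{(0)}_0s^{(1)*}_0$, not at $\eins$. The whole path also lies outside $\cU(\eins+B)$, so the $K_1$ remark does not apply.

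\textbf{What is missing.} You need a step that converts a norm-continuous path $W$ in $\cU\cM(B)$ into a norm-continuous path $w$ in $\cU(\eins+B)$ with $w_0=\eins$ and $\|(w_t-W_t)e\|\to0$ for all $e\in B$. This is exactly how the paper proceeds. It first prepends a path from $\eins$ to $W_0$ inside $\cU\cM(B)$, using path-connectedness for stable $B$ (Cuntz--Higson); in your picture a compressed $2\times2$ rotation from $\eins$ to the flip would also do. It then invokes \cite[Lemma 4.3]{GASZ25}.

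This step is not cosmetic. Proposition~\ref{prop:stablemvn} needs unitaries in $\cU(\eins+B)$ to conclude \emph{proper} unitary equivalence. Moreover, making the strict approximation continuous in $t$ requires a real argument.

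\textbf{What already works.} Everything else in your proposal is sound.
\begin{itemize}
\item Your compressed Halmos dilation is a norm-continuous path of unitaries in $\cU\cM(B)$. As a way to produce this multiplier path, it is more economical than the paper's $R+V$, which needs a third isometry and Cuntz's comparison theorem.
\item Your estimate is correct once $\|s^{(0)}_tv_t-v_t\|\to0$.
\item That reparametrization is easy and is what the paper does. The set $\{v_t:t\in[0,n+1]\}$ is norm-compact, and strict convergence $s^{(0)}_T\to\eins$ is uniform on norm-compact subsets of $B$, so no dense family of $e$'s is needed.
\end{itemize}
Adding the conversion step above after your construction would complete the proof.
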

\begin{proof}
As $B$ is stable, the unitary group $\cU\cM(B)$ is path-connected \cite{CUHI87}.
Thus, any norm continuous path $W:[0,\infty)\rightarrow \cU\cM(B)$ can be extended (by adding an initial line segment) to a norm-continuous path starting at $\eins$.
As a consequence of \cite[Lemma 4.3]{GASZ25}, there exists a norm-continuous path $w:[0,\infty)\rightarrow \cU(\eins+B)$ with $w_0=\eins$ such that
\[
\limsup_{t\rightarrow\infty} \lVert (w_t-W_t)e\rVert=0,\quad e\in B.
\]
Therefore, it suffices to find a path $w$ as in the statement of the lemma with values in $\cU\cM(B)$ instead.
 
Let $s_t^{(j)}$ for $j=0,1$ be norm-continuous paths of isometries satisfying the conclusion of Lemma \ref{lem:pathsofisometries}.
By cutting away an initial line segment and reparametrizing these paths on $[0,\infty)$, if necessary, we may assume that
\[
\lim_{t\rightarrow\infty}\lVert s_t^{(0)}v_t-v_t \rVert= 0.
\]
For $t\geq 0$, let $r_t^{(0)}=s_t^{(0)}$, $r_t^{(1)}=s_t^{(1)}s_t^{(0)}$ and $r_t^{(2)}=(s_t^{(1)})^2$. 
Then $r^{(j)}$, for $j=0,1,2$, define norm-continuous paths of isometries such that
\[
\sum_{j=0}^2 r_t^{(j)}{r_t^{(j)*}}=\eins,\quad r_t^{(0)}\rightarrow \eins\quad \text{strictly},\quad \lim_{t\rightarrow\infty}\lVert r_t^{(0)}v_t-v_t\rVert=0.
\]
We let
\[
R_t=r_t^{(0)}v_t{r_t^{(0)*}}+r_t^{(1)}(\eins-v_t^*v_t)^{1/2}{r_t^{(0)*}},\quad t\geq 0,
\]
which induces an element $R$ in the \cstar-algebra $C_b([0,\infty),\cM(B))$.
Note that
\[
R_tR_t^*\leq \eins-{r_t^{(2)*}}r_t^{(2)},\quad 
R_t^*R_t=r_t^{(0)}{r_t^{(0)*}},
\]
so $r^{(2)}{r^{(2)*}}\leq \eins-RR^*$ and $r^{(2)}{r^{(2)*}}\leq \eins-R^*R$.
In particular $\eins-RR^*$ and $\eins-R^*R$ are properly infinite and full projections in $C_b([0,\infty),\cM(B))$.
As a consequence of \cite{CU81} (see also \cite[Exercise 4.9]{IntroK}) there is a partial isometry $V\in C_b([0,\infty),\cM(B))$ such that
\[
VV^*=\eins-RR^*,\quad V^*V=\eins-R^*R=\eins-r^{(0)}{r^{(0)*}}.
\]
Now, let $w=R+V\in C_b([0,\infty),\cM(B))$.
Then we have that
    \begin{align*}
    w^*w=(R+V)^*(R+V)&=R^*R+V^*R+V^*V+RV^*\\
    &=\eins+V^*VV^*RR^*R+RR^*RV^*VV^* = \eins
    \end{align*}
and similarly $ww^*=\eins$.
Thus $w$ is a norm-continuous path valued in $\cU\cM(B)$.
Moreover, we observe for every $e\in B$ that
    \begin{align*}
    &\limsup_{t\rightarrow\infty}\lVert(w_t-v_t)e\rVert^2 \\
    &\leq \limsup_{t\rightarrow\infty} \big( \underbrace{\lVert (r_t^{(0)}v_t{r_t^{(0)*}}-v_t)e\rVert}_{\to 0} + \lVert r_t^{(1)}(\eins-v_t^*v_t)^{1/2}{r_t^{(0)*}}e\rVert + \lVert V_t e\rVert \big)^2
    \\
    &\leq \limsup_{t\rightarrow\infty} \big( \lVert (\eins-v_t^*v_t)^{1/2}e\rVert+\lVert V_t e\rVert \big)^2\\
    &= \limsup_{t\rightarrow\infty} \big( \lVert (\eins-v_t^*v_t)^{1/2}e\rVert+\underbrace{\lVert V_t (\eins-r_t^{(0)}{r_t^{(0)*}}) e\rVert}_{\to 0} \big)^2\\
    &\leq\limsup_{t\rightarrow\infty} \lVert (\eins-v_t^*v_t)^{1/2}e\rVert^2 \ = \ \limsup_{t\rightarrow\infty} \lVert e^*(\eins-v_t^*v_t)e\rVert,
    \end{align*}
as required.
\end{proof}

Recall from \cite{RO16} that a C$^*$-algebra $B$ is said to have \emph{almost stable rank one} if $B\subset \overline{GL(B^\dagger)}$.

\begin{lemma} \label{lem:sequencesettingLemma}
Let $B$ be a \cstar-algebra that is either stable or has almost stable rank one, and let $v\in B_1$.
For any finite set $F\subset B$ and $\varepsilon>0$, there exists $w\in \cU(\eins+B)$ such that $\lVert (w-v)e\rVert^2 < \varepsilon+\lVert e^*(\eins-v^*v)e\rVert$ for $e\in F$.
\end{lemma}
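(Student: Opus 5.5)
We treat the two cases separately. In the stable case the statement is a direct consequence of Lemma \ref{lem:approxleftmult}, applied to the constant path $v_t=v$ for all $t\geq 0$. That lemma yields a continuous unitary path $w:[0,\infty)\rightarrow\cU(\eins+B)$ with
\[
\limsup_{t\rightarrow\infty}\lVert (w_t-v)e\rVert^2\leq \lVert e^*(\eins-v^*v)e\rVert
\]
for every $e\in B$. Since $F$ is finite, we may pick $t$ large enough that $\lVert (w_t-v)e\rVert^2<\varepsilon+\lVert e^*(\eins-v^*v)e\rVert$ holds simultaneously for all $e\in F$. Then $w=w_t$ works.

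In the almost stable rank one case we cannot run a dilation argument inside $\cM(B)$. Instead we use invertible approximation and polar decomposition in $B^\dagger$.
\begin{enumerate}[leftmargin=*]
\item By the definition of almost stable rank one, $v\in\overline{GL(B^\dagger)}$. So for any $\delta>0$ there is an invertible $x\in B^\dagger$ with $\lVert x-v\rVert<\delta$.
\item Form the polar decomposition $x=u|x|$, where $u=x|x|^{-1}$ is a unitary in $B^\dagger$.
\item The image of $x$ in $B^\dagger/B\cong\bC$ is a nonzero scalar $\lambda$, close to $0$ because $v\in B$. After multiplying $u$ by the scalar $\bar\lambda/|\lambda|$, which leaves the estimate in step 4 unchanged, we obtain $w:=\tfrac{\bar\lambda}{|\lambda|}u\in\cU(\eins+B)$.
\item Since $x=u|x|$, we have $ue-x e=u(\eins-|x|)e$, so
\[
\lVert (u-x)e\rVert=\lVert(\eins-|x|)e\rVert \quad\text{and}\quad \lVert(\eins-|x|)e\rVert^2=\lVert e^*(\eins-|x|)^2e\rVert.
\]
\end{enumerate}

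The main point is the comparison $(\eins-|x|)^2\leq \eins-|x|^2$, which we claim up to an error controlled by $\delta$. We may first rescale $v$ so that $\lVert v\rVert\leq 1-\eta$; this costs only a small error in both sides of the desired inequality. With $\delta<\eta$ we then get $\lVert x\rVert\leq 1$, so $0\leq|x|\leq\eins$. For $0\leq s\leq 1$ we have $(1-s)^2\leq 1-s^2$, and functional calculus gives $(\eins-|x|)^2\leq\eins-x^*x$. Finally $\lVert x^*x-v^*v\rVert\leq 2\delta$. Combining these estimates with the triangle inequality $\lVert(w-v)e\rVert\leq\lVert(u-x)e\rVert+\delta\lVert e\rVert$ (up to the scalar correction in step 3, which tends to $0$ with $\delta$ since $v\in B$) yields the bound for all $e\in F$ once $\delta$ and $\eta$ are small relative to $\varepsilon$ and $\max_{e\in F}\lVert e\rVert$.

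The main obstacle is the bookkeeping in this last step. The scalar-part correction must be handled carefully so that $w$ genuinely lies in $\eins+B$. The rescaling trick, which ensures $\lVert x\rVert\leq 1$, must also be shown to cost only $O(\eta)$ in both sides of the estimate.
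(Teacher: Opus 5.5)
Your overall route is the paper's. The stable case uses Lemma~\ref{lem:approxleftmult} for the constant path, and the almost stable rank one case uses an invertible approximant, its polar decomposition and $(1-t)^2\leq 1-t^2$. Rescaling $v$ so that $\lVert x\rVert\leq 1$ is a harmless substitute for the paper's choice of a contractive invertible.

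\textbf{The gap is in step 3.} You claim the scalar correction ``tends to $0$ with $\delta$ since $v\in B$''. What tends to $0$ is $\lambda$, since $|\lambda|\leq\lVert x-v\rVert$. The correcting factor $\mu:=\bar\lambda/|\lambda|$ is a phase of modulus one and need not approach $1$. With $w=\mu u$ one only gets
\[
\lVert(w-v)e\rVert\leq\lVert(u-x)e\rVert+\delta\lVert e\rVert+|\mu-1|\,\lVert ve\rVert ,
\]
and the last term can be of size $2\lVert ve\rVert$.

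Here is a concrete failure. Take $B=C_0((0,1))$, which has almost stable rank one because $B^\dagger\cong C(\bT)$ has stable rank one. Let $v=-\rho$ with $0\leq\rho\leq 1$ and $\rho=1$ on $[\frac14,\frac34]$, and take the legitimate approximant $x=v-s\eins$ with $s>0$. Then $u=-\eins$ and your $w$ equals $\eins$. For $e$ supported in $[\frac14,\frac34]$ with $\lVert e\rVert=1$ you get $\lVert(w-v)e\rVert=2$, while the right-hand side is only $\varepsilon$.

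You have spotted a real point: the paper just sets $w=y|y|^{-1}$ and never checks that its scalar part is $1$. But multiplying by a scalar does not resolve it.

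\textbf{The fix.} Correct the phase with a unitary that has scalar part $\mu$ but acts almost trivially on $ue$.
\begin{enumerate}[leftmargin=*]
\item Write $\mu=e^{i\theta}$ with $|\theta|\leq\pi$.
\item Choose a positive contraction $h\in B$ from an approximate unit with $\lVert(\eins-h)ue\rVert$ small for all $e\in F$. This is possible because $ue\in B$.
\item Set $z=\exp(i\theta(\eins-h))$. Then $z\in\cU(B^\dagger)$ has scalar part $\mu$, so $w:=zu$ lies in $\cU(\eins+B)$.
\item Write $e^{i\theta s}-1=s\,g(s)$ with $|g(s)|\leq|\theta|$ on $[0,1]$. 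This gives
\[
\lVert(w-u)e\rVert=\lVert(z-\eins)ue\rVert\leq|\theta|\,\lVert(\eins-h)ue\rVert .
\]
\end{enumerate}
With $z$ in place of the scalar $\mu$, the rest of your estimate for $\lVert(u-v)e\rVert$ goes through unchanged. The same correction also repairs the paper's argument.
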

\begin{proof}
   Suppose $B$ is a stable \cstar-algebra and $v\in B_1$.
Then applying Lemma \ref{lem:approxleftmult} with the constant path $v$, we get a continuous unitary path $w: [0,\infty)\to\cU(\eins+B)$ such that 
\[
\limsup_{t\rightarrow \infty}\lVert (w_t-v)e\rVert^2 \leq \lVert e^*(\eins-v^*v)e\rVert
\]
for all $e\in B$.
This implies the result when $B$ is stable.

Suppose $B$ has almost stable rank one.
Let $\delta>0$ such that $\delta+\sqrt{2\delta}\leq\varepsilon$.
Choose a contraction $y\in GL(A^\dagger)$ with 
\[
\lVert y-v\rVert <\frac{\delta}{1+\max_{x\in F\cup F^2} \lVert x\rVert}
\]
Set $w=y\lvert y\rvert^{-1}$.
We first note that $\|y^*y-v^*v\|<\frac{2\delta}{1+\max_{x\in F\cup F^2} \lVert x\rVert}\leq 2\delta$ follows from the triangle inequality.
Then for $e\in F$ we have that
\begin{align*}
    \lVert (w-v)e\rVert&\leq \lVert (y-v)e\rVert+\lVert w(\eins-\lvert y\rvert)e\rVert\\
    &< \delta+\sqrt{\lVert e^*(\eins-\lvert y\rvert)^2e\rVert}\\
    &\leq \delta+\sqrt{\lVert e^*(\eins-y^*y)e\rVert}\\
    &\leq \delta +\sqrt{2\delta}+\sqrt{\lVert e^*(\eins-v^*v)e\rVert}\\
    &\leq \varepsilon +\sqrt{\lVert e^*(\eins-v^*v)e\rVert}.
\end{align*}
Here we have employed functional calculus with the inequality $(1-t)^2\leq 1-t^2$ for all $t\in [0,1]$ in the third inequality.
\end{proof}

We shall prove the dynamical analogue of \cite[Proposition 3.13]{GA20}.
Even without the dynamics, our proof somewhat differs from the one in \cite{GA20}.

\begin{proposition}\label{prop:stablemvn}
Let $A$ and $B$ be \cstar-algebras with $A$ separable.
Let $\phi,\psi:(A,\alpha,\fu)\rightarrow (B,\beta,\fv)$ be two $G$-coherent morphisms between anomalous actions of a countable discrete group $G$.
Then:
	\begin{enumerate}[leftmargin=*,label=\textup{(\roman*)}]
	\item If $B$ is either stable or has almost stable rank one, then $\phi$ and $\psi$ are approximately Murray von Neumann equivalent if and only if they are proper approximately unitarily equivalent. 
    \item If $B$ is stable, then $\phi$ and $\psi$ are asymptotically Murray von Neumann equivalent if and only if they are proper asymptotically unitarily equivalent.
    \end{enumerate}
\end{proposition}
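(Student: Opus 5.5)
The easy direction is immediate in both parts. If unitaries $u_n\in\cU(\eins+B)$ (or a continuous path $u_t$) witness proper approximate (asymptotic) unitary equivalence $\lim\beta_g(u_n)\phi_g(a)u_n^*=\psi_g(a)$, then $v_n:=u_n^*$ satisfies the first Murray--von Neumann condition. The second condition holds because $\phi_g(a)=\beta_g(u_n^*)\big(\beta_g(u_n)\phi_g(a)u_n^*\big)u_n$ is close to $\beta_g(u_n^*)\psi_g(a)u_n$. The only wrinkle is that $v_n=u_n^*$ lies in $\eins+B$ rather than in $B$. To fix this, we replace $v_n$ by $u_n^*e_n$ for a suitable approximate unit $(e_n)$ of $B$ that is quasicentral for $\beta$, and use that $\phi_g(a)\in B$ to absorb the units. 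This is routine, and the same works for paths.

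For the converse, assume contractions $v_n\in B_1$ (respectively a contractive path $v_t$) witness $\phi\approx_{\mathrm{MvN}}^G\psi$. Fix a finite set $K\subset G$, a finite set $F\subset A$ and $\varepsilon>0$. We apply \cref{lem:sequencesettingLemma} to $v=v_n$ and to a finite set of elements $e$ of the form $\psi_g(a)$ and $\psi_g(a)^*$, producing $w_n\in\cU(\eins+B)$. In the asymptotic case with $B$ stable, we instead apply \cref{lem:approxleftmult} to the path $v_t$ and get a continuous unitary path $w_t$.

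The key estimate is that $\lVert e^*(\eins-v_n^*v_n)e\rVert\to0$ for these $e$. Using Proposition~\ref{prop:leftinnerproduct}, together with the identity $\psi_g(a)^*\psi_g(a)=\psi_1(a^*a)$ from \cref{defn:weakcocyclemorphism}\ref{item:1weakcocyc}, we compute
\[
\psi_g(a)^*v_n^*v_n\psi_g(a)\approx \psi_g(a)^*v_n^*\,\beta_g(v_n^*)\phi_g(a)v_n\cdots .
\]
A cleaner route is to test against $g=1$: we have $v_n^*\phi_1(b)v_n\to\psi_1(b)$ and $v_n\psi_1(b)v_n^*\to\phi_1(b)$. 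Taking $b=aa^*$ and $b=a^*a$ with $a$ in a dense set, this gives $\psi_1(b)v_n^*v_n\psi_1(b)\approx v_n^*\phi_1(b)v_n v_n^*\phi_1(b)v_n\approx v_n^*\phi_1(b)\phi_1(b)v_n\approx\psi_1(b^2)$, so $\lVert\psi_1(b)(\eins-v_n^*v_n)\psi_1(b)\rVert\to0$. Hence $(w_n-v_n)\psi_1(b)\to0$. Since $\psi_g(a)=\bv_g^*\psi_1(a)$ and $\psi_g(a)b'=\psi_g(ab')$-type factorizations hold by \cref{defn:weakcocyclemorphism}\ref{item:2weakcocyc} with $g=1$ (namely $\psi_h(ab)=\psi_h(a)\psi_1(b)$), we may factor $\psi_g(a)=\psi_g(a')\psi_1(b)$ by Cohen factorization. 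This gives $(w_n-v_n)^*$-type estimates on the right of $\psi_g(a)$. Symmetrically, $\beta_g(w_n-v_n)$ is handled on the left of $\psi_g(a)$ using \eqref{eqn:leftinnerproduct}, which yields $\psi_g(a)\psi_g(a)^*=\beta_g(\psi_1(\alpha_g^{-1}(aa^*)))$, i.e. a left factor of the form $\beta_g(\psi_1(\cdot))$. Combining these, $\beta_g(w_n)\psi_g(a)w_n^*\approx\beta_g(v_n)\psi_g(a)v_n^*\to\phi_g(a)$. Finally, $w_n^*$ (which lies in $\cU(\eins+B)$) witnesses proper approximate unitary equivalence of $\phi$ and $\psi$, and a diagonal argument over increasing $F$, $K$ and $\varepsilon$ finishes part (i). In the asymptotic case the limsup statement of \cref{lem:approxleftmult} gives the same conclusion along the continuous path $w_t$, proving part (ii).

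The main obstacle is the middle step: controlling $\beta_g(w_n-v_n)$ multiplied on the left, since \cref{lem:sequencesettingLemma} only controls right multiplication $(w-v)e$. The plan for this step is to use the adjoint, $\lVert e^*\beta_g(w_n-v_n)^*\rVert=\lVert\beta_g(w_n-v_n)e\rVert$ with $e=\beta_g(\psi_1(c))$. Since $\beta_g$ is an automorphism, this reduces to $\lVert(w_n-v_n)\psi_1(c)\rVert$, which is already controlled.
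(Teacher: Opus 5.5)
Your overall route is the paper's: reduce to the estimate $(\eins-v_n^*v_n)\psi_1(a)\to0$, use \cref{lem:sequencesettingLemma} (resp.\ \cref{lem:approxleftmult}) to obtain $w_n\in\cU(\eins+B)$ with $(w_n-v_n)\psi_1(a)\to0$, and transfer this to $\psi_g$. Your factorization and adjoint manipulations for the transfer amount to the paper's norm identities $\lVert\beta_g(x)\psi_g(a)\rVert=\lVert x\psi_1(\alpha_g^{-1}(a))\rVert$ and $\lVert\psi_g(a)x\rVert=\lVert\psi_1(a)x\rVert$, and they work. Your remark that the easy direction needs a cut-down by an approximate unit (because $u_n^*\notin B$) is correct.

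The step that fails as written is your derivation of the key estimate $\lVert\psi_1(b)(\eins-v_n^*v_n)\psi_1(b)\rVert\to0$, which the paper imports from Gabe's Lemma 3.8. Substituting $\psi_1(b)\approx v_n^*\phi_1(b)v_n$ into $\psi_1(b)v_n^*v_n\psi_1(b)$ gives $v_n^*\phi_1(b)(v_nv_n^*)^2\phi_1(b)v_n$, not $v_n^*\phi_1(b)v_nv_n^*\phi_1(b)v_n$. The expression you wrote is trivially $\approx\psi_1(b)^2$, so all the content of the chain sits in that unjustified first step. To remove the factors $v_nv_n^*$ you would need $\phi_1(b)(\eins-v_nv_n^*)\phi_1(b)\approx0$, which is the mirror image of the claim, so the argument is circular. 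Contractivity alone only yields the useless inequality $\psi_1(b)v_n^*v_n\psi_1(b)\le\psi_1(b)^2$.

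A direct fix is to substitute only once, using the other relation:
\[
\psi_1(b^2)\approx v_n^*\phi_1(b)\phi_1(b)v_n\approx v_n^*\phi_1(b)v_n\,\psi_1(b)\,v_n^*v_n\approx\psi_1(b)^2v_n^*v_n .
\]
Hence $\psi_1(b^2)(\eins-v_n^*v_n)\to0$. Since every positive element of $A$ is a square, taking adjoints and linear spans gives $(\eins-v_n^*v_n)\psi_1(c)\to0$ for all $c\in A$. The same argument works verbatim for paths $v_t$. With this step repaired, or simply replaced by the citation of Gabe's lemma, the rest of your argument goes through for both parts.
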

\begin{proof}
As the ``if'' parts are tautological, we show the ``only if'' parts.
First observe that if $a\in A$ and $b\in B$, then by Proposition \ref{prop:leftinnerproduct} one has
\begin{align}
\lVert \beta_g(b)\phi_g(a)\rVert^2&=\lVert \beta_g(b)\phi_g(a)\phi_g(a)^*\beta_g(b)^*\rVert=\lVert b\phi_1(\alpha_g^{-1}(aa^*))b^*\rVert\label{eqn:leftnorm}\\
&=\lVert b\phi_1(\alpha_g^{-1}(a))\rVert^2 \notag
\end{align}
and
\begin{equation}\label{eqn:rightnorm}
    \lVert \phi_g(a)b\rVert^2=\lVert b^*\phi_g(a)^*\phi_g(a)b\rVert=\lVert b^*\phi_1(a^*a)b\rVert=\lVert \phi_1(a)b\rVert ^2.
\end{equation}
Now, suppose $\phi$ and $\psi$ are approximately Murray von Neumann equivalent and $B$ is stable or has almost stable rank one.
There exists a sequence $v_n\in B_1$ such that
\[
\lim_{n\rightarrow\infty}\beta_g(v_n^*)\phi_g(a)v_n=\psi_g(a),\quad \lim_{n\rightarrow\infty}\beta_g(v_n)\psi_g(a)v_n^*=\phi_g(a),
\]
for all $a\in A$ and $g\in G$.
By \cite[Lemma 3.8]{GA20}, it follows that $(\eins-v_n^* v_n) \psi_1(a) \to 0$ for all $a\in A$.
Then as a consequence of Lemma \ref{lem:sequencesettingLemma}, one may find a sequence $w_n\in \cU(\eins+B)$ such that
\[
\lim_{n\rightarrow\infty}(w_n-v_n)\psi_1(a)=0,\quad a\in A.
\]
In particular, given $a\in A$, it follows from \eqref{eqn:leftnorm} and \eqref{eqn:rightnorm} that
\begin{align*}
    &\lVert\beta_g(w_n)\psi_g(a)w_n^*-\phi_g(a)\rVert \\
    &\leq \lVert\beta_g(w_n-v_n)\psi_g(a)w_n^*\rVert+\lVert\beta_g(v_n)\psi_g(a)(w_n^*-v_n^*)\rVert + \lVert\beta_g(v_n)\psi_g(a)v_n^*-\phi_g(a)\rVert\\
    &\leq \lVert\beta_g(w_n-v_n)\psi_g(a)\rVert+\lVert\psi_g(a)(w_n^*-v_n^*)\rVert + \lVert\beta_g(v_n)\psi_g(a)v_n^*-\phi_g(a)\rVert\\
    &=\lVert (w_n-v_n)\psi_1(\alpha_g^{-1}(a))\rVert+\lVert \psi_1(a)(w_n^*-v_n^*)\rVert + \lVert\beta_g(v_n)\psi_g(a)v_n^*-\phi_g(a)\rVert\\
   &\xlongrightarrow{n\to\infty} 0.
\end{align*}
This implies that $\psi$ and $\phi$ are proper approximate unitarily equivalent.

Now suppose $\phi$ and $\psi$ are asymptotically unitarily equivalent and $B$ is stable. Let $v_t:[0,\infty)\rightarrow B_1$ be a continuous paths such that
\[
\lim_{t\rightarrow\infty}\beta_g(v_t^*)\phi_g(a)v_t=\psi_g(a),\quad \lim_{t\rightarrow\infty} \beta_g(v_t)\psi_g(a)v_t^*=\phi_g(a)
\]
for all $a\in A$ and $g\in G$.
As above, \cite[Lemma 3.8]{GA20} entails that $(\eins-v_t^* v_t) \psi_1(a) \to 0$ for all $a\in A$. 
We apply \cref{lem:approxleftmult} to find a unitary path $w: [0,\infty)\to\cU(\eins+B)$ satisfying the conclusion stated there. 
Then by \eqref{eqn:leftnorm} and \eqref{eqn:rightnorm} we have for all $a\in A$ and $g\in G$ that
\begin{align*}
& \lVert \beta_g(w_t)\psi_g(a)w_t^*-\phi_g(a)\rVert \\
&\leq \lVert\beta_g(w_t-v_t)\psi_g(a)w_t^*\rVert+\lVert\beta_g(v_t)\psi_g(a)(w_t^*-v_t^*)\rVert + \lVert\beta_g(v_t)\psi_g(a)v_t^*-\phi_g(a)\rVert\\
    &= \lVert (w_t-v_t)\psi_1(\alpha_g^{-1}(a))\rVert+\lVert\psi_1(a)(w_t^*-v_t^*)\rVert + \lVert\beta_g(v_t)\psi_g(a)v_t^*-\phi_g(a)\rVert.
\end{align*}
Note that the limsup of the right-hand side is seen to vanish.
Therefore
\[
\lim_{t\rightarrow\infty}\beta_g(w_t)\psi_g(a)w_t^*=\phi_g(a),\quad a\in A,
\]
which means that $\psi$ and $\phi$ are proper asymptotic unitarily equivalent.
\end{proof}

\section{Existence and Uniqueness}\label{sec:exist+uniq}

In this section we prove existence and uniqueness results for $G$-coherent morphisms in the spirit of \cite[Section 3.1]{GASA16}, under the assumption that $G$ is finite and the anomalous action belonging to the codomain has the Rokhlin property.
These results will form the basis of Section \ref{sec:classification} to classify Rokhlin anomalous actions.

\begin{proposition}\label{prop:uniqueness}
Let $(A,\alpha,\fu)$ and $(B,\beta,\fv)$ be anomalous actions of a finite group $G$ on separable C$^*$-algebras such that $(B,\beta,\fv)$ has the Rokhlin property.
Let $\phi,\psi:(A,\alpha,\fu)\rightarrow (B,\beta,\fv)$ be two $G$-coherent morphisms.
If $\phi_1\approx_{\mathrm{MvN}}\psi_1$, then $\phi\approx_{\mathrm{MvN}}^G \psi$.
\end{proposition}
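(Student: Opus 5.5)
The plan is to adapt the classical Rokhlin averaging argument from the genuine action case (Izumi, Gardella--Santiago) to $G$-coherent morphisms. By Proposition~\ref{prop:leftinnerproduct}, $\phi$ and $\psi$ are described by $\phi_1,\psi_1$ together with tuples of partial isometries $(\bv^\phi_g)_g$, $(\bv^\psi_g)_g$ in $B^{**}$ satisfying the twisted cocycle identity~\ref{prop:leftinnerproduct:d}. Heuristically, we want to replace the given element $v$ implementing $\phi_1\approx_{\mathrm{MvN}}\psi_1$ by
\[
w=\sum_{k\in G}\beta_k(f)\,(\bv^\phi_k)^*\beta_k(v)\bv^\psi_k ,
\]
where $f$ is a Rokhlin element as in Lemma~\ref{lem:Rp} for $\beta$. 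Since the $\bv$'s only live in $B^{**}$, the summands are realised inside $B$ through the maps themselves, namely as $\phi_k(e)\,\beta_k(v)\,\psi_k(e)^*$ with $e$ a positive contraction from an approximate unit of $A$ that is quasicentral with respect to the relevant finite sets. By Proposition~\ref{prop:leftinnerproduct}, such an expression approximates $(\bv^\phi_k)^*\beta_k(v)\bv^\psi_k$ on the relevant finite sets.

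Concretely, fix finite $F\subset A$ and $\varepsilon>0$, and work in sequence algebras by separability. I would proceed in four steps.
\begin{enumerate}[leftmargin=*,label=\textup{(\arabic*)}]
\item Choose $v_n\in B_1$ with $v_n^*\phi_1(a)v_n\to\psi_1(a)$ and $v_n\psi_1(a)v_n^*\to\phi_1(a)$. Choose an approximate unit $e_n$ of $A$.
\item By a diagonal argument, choose Rokhlin contractions $f_n$ from Lemma~\ref{lem:Rp}. They should asymptotically commute with the separable subalgebra generated by $\phi_g(A),\psi_g(A)$, the elements $\beta_k(v_n)$ and $\phi_k(e_n),\psi_k(e_n)$. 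They should also satisfy the orthogonality relation (ii) and the partition-of-unity relation (iii) of Lemma~\ref{lem:Rp} relative to these elements.
\item Set $w_n=\sum_k\beta_k(f_n^{1/2})\,\phi_k(e_n)\beta_k(v_n)\psi_k(e_n)^*$.
\item Compute $\beta_g(w_n)^*\phi_g(a)w_n$. Because the $\beta_k(f_n)$ are asymptotically orthogonal and central, only the diagonal terms survive after reindexing $k\mapsto gk$. Each diagonal term becomes, via condition~\ref{item:2weakcocyc} of Definition~\ref{defn:weakcocyclemorphism} and~\eqref{eqn:leftinnerproduct}, an expression of the form $\beta_{gk}(f_n)\,\cdot\,(\text{twisted conjugate of } v_n^*\phi_1(\cdot)v_n)$.
\end{enumerate}
Using~\ref{prop:leftinnerproduct:d} for $\phi$ and for $\psi$, the cocycles $\fu$ and $\fv$ cancel. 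This cancellation crucially uses that both anomalous actions carry the same $3$-cocycle, so that the scalars $o(\alpha,\fu)$ and $o(\beta,\fv)$ cancel. Each diagonal term then reduces to $\beta_{gk}(f_n)\psi_g(a)$ up to a vanishing error. Summing over $k$ and using relation (iii) of Lemma~\ref{lem:Rp} gives $\psi_g(a)$. The reverse relation $\beta_g(w_n)\psi_g(a)w_n^*\to\phi_g(a)$ follows by the symmetric computation.

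The main obstacle is step (4): bookkeeping the multiplier-valued cocycles and the partial isometries in $B^{**}$ while only having access to the linear maps $\phi_g,\psi_g$ inside $B$. In particular, one must check that products like $\psi_k(e_n)^*\psi_g(a)\psi_{gk}(e_n)$ (after the index shift) converge to the right twisted expressions, using conditions~\ref{item:1weakcocyc} and~\ref{item:2weakcocyc} of Definition~\ref{defn:weakcocyclemorphism} with the quasicentrality of $e_n$, and that the norm estimates~\eqref{eqn:leftnorm} and~\eqref{eqn:rightnorm} control all error terms uniformly in $g$. I would first carry out the formal computation in $B^{**}$ with the $\bv$'s, and then transfer it to $B$ by approximating the $\bv$'s against the relevant elements.
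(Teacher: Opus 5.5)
Your overall strategy is the right one: Rokhlin-average an implementer of $\phi_1\approx_{\mathrm{MvN}}\psi_1$, and realise the twisted summands inside $B$ through the maps $\phi_k,\psi_k$ evaluated at an approximately $\alpha$-invariant approximate unit. However, the element in step (3) has the adjoints on the wrong side, so the cancellation you claim in step (4) does not occur.

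Heuristically your summand is $(\bv^\phi_k)^*\beta_k(v)\bv^\psi_k$. In $\beta_g(w)^*\phi_g(a)w$ this produces the products $\beta_g(\bv^\phi_k)(\bv^\phi_g)^*$ and $\phi_1(a)(\bv^\phi_{gk})^*$. But relation \ref{prop:leftinnerproduct:d} of Proposition~\ref{prop:leftinnerproduct} only controls $\bv_g\beta_g(\bv_k)$, and \eqref{eqn:leftinnerproduct} only controls $\bv_{gk}^*\phi_1(\cdot)\bv_{gk}$. Similarly, in $\phi_k(e)\beta_k(v)\psi_k(e)^*=(\bv^\phi_k)^*\phi_1(e)\beta_k(v)\psi_1(e)\bv^\psi_k$ the factors $\phi_1(e),\psi_1(e)$ sit next to $\beta_k(v)$, where they do not act as approximate units.

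Here is a concrete failure. Let $\beta=\alpha$ be a genuine Rokhlin action of $G=\bZ/3$ on a unital algebra $A=B$ (e.g.\ $M_{3^\infty}$), with $\fu=\fv=\eins$. Take $\phi_g=\id_A$ and $\psi_g(a)=\overline{\chi(g)}a$ for a faithful character $\chi$. This $\psi$ is the $G$-coherent morphism induced by the cocycle morphism $(\id_A,\chi)$, and $\phi_1=\psi_1$. With $e=v=\eins$ your element is $w=\sum_k\chi(k)\beta_k(f^{1/2})$. Then $\beta_g(w)^*aw\approx\sum_k\overline{\chi(k)}\chi(gk)\beta_{gk}(f)a=\chi(g)a$, which is not $\psi_g(a)=\overline{\chi(g)}a$. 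Appealing to the anomaly cannot repair this: equality of anomalies is not a hypothesis of the statement, and it plays no role in the correct computation.

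The fix is to use $\bv^\phi_k\beta_k(v)(\bv^\psi_k)^*$, realised in $B$ as $\phi_k(e)^*\beta_k(v)\psi_k(e)=\phi_1(e)\bv^\phi_k\beta_k(v)(\bv^\psi_k)^*\psi_1(e)$. Now the $e$-factors end up next to $\phi_1(a)$ and $\psi_1(a)$, where they do act as approximate units. In the example this gives $\sum_k\overline{\chi(k)}\beta_k(f^{1/2})$, which works.

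With this correction your argument becomes the paper's. The paper takes $v=\sum_g\psi_g(e)^*\beta_g(sr)\phi_g(e)$ with $s\phi_1(\cdot)s^*\approx\psi_1$. This is the adjoint of your corrected element with $v_n=s^*$, and with the Rokhlin element placed inside $\beta_g$. It verifies $\beta_g(v)\phi_g(a)v^*\approx\psi_g(a)$ entirely inside $B$, without any detour through $B^{**}$, as follows:
\begin{itemize}
\item expand $\beta_g\beta_h=\Ad(\fv_{g,h})\beta_{gh}$;
\item absorb $\fv_{g,h}^*\beta_g(\phi_h(e))\phi_g(a)=\phi_{gh}(\fu_{g,h}^*\alpha_g(e)a)$ using Definition~\ref{defn:weakcocyclemorphism}\ref{item:2weakcocyc};
\item use Rokhlin orthogonality to keep only the index $gh$ in the second sum;
\item convert $\phi_{gh}(x)\phi_{gh}(e)^*$ into $\beta_{gh}\phi_1\alpha_{gh}^{-1}(xe)$ via \eqref{eqn:leftinnerproduct};
\item apply the MvN relation on the elements $\alpha_{gh}^{-1}(\fu_{g,h}^*a)$;
\item run the same identities backwards for $\psi$.
\end{itemize}
The $\fv$'s coming from $\phi$ and from $\psi$ cancel each other, and the $\fu$'s are absorbed into the arguments of $\phi_1$ and $\psi_1$.
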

\begin{proof}
Let $D\subseteq\cM(A)$ be the \cstar-algebra generated by $\{\fu_{g,h}:g,h\in G\}$.
Note that $D$ is $\alpha$-invariant due to the cocycle identity for $\fu$.
Let $e_n$ be an approximate unit of $A$ which is quasicentral relative to $D$.
Then $e_n'=\sum_{g\in G}\alpha_g(e_n)$ defines an approximate unit for $A$ which is still quasicentral relative to $D$ and such that $\|\alpha_g(e_n')-e_n'\|\rightarrow 0$ as $n\to\infty$.
Let $\varepsilon>0$ and $\cF\subset A_{1}$ be a finite set.
Thus by passing to large enough $n$ we may pick $e\in A_1^{+}$ such that
    \begin{enumerate}[label=\textup{(\alph*)}]
    \item  $\|\alpha_g(e)-e\|<\frac{\varepsilon}{2|G|^2}$, \label{prop:uniqueness:proof-a}
    \item  $\|ae-a\|< \frac{\varepsilon}{|G|^2}$, \label{prop:uniqueness:proof-b}
    \end{enumerate}
for all $g\in G$ and $a\in\cF$.
As $\phi$ and $\psi$ are approximately Murray von Neumann equivalent, there exists an element $s\in B_1$ such that 
\begin{equation} \label{prop:uniqueness:proof-s} 
s\phi_1(a)s^*=_{\varepsilon/|G|}\psi_1(a),\quad s^*\psi_1(a)s=_{\varepsilon/|G|} \phi_1(a)
\end{equation}
whenever $a\in \alpha_{gh}^{-1}(\fu_{g,h}^*\cF)$ for $g,h\in G$. 
Using the Rokhlin property via Lemma~\ref{lem:Rp} for $(B,\beta,\fv)$ we can pick a positive contraction $r\in B_{1}^+$ such that
    \begin{enumerate}[label=\textup{(\alph*)},resume]
    \item $\lVert\beta_{g}(r)b-b \beta_g(r)\rVert < \frac{\varepsilon}{|G|^2}$,\label{prop:uniqueness:proof-c}
    \item $\lVert\beta_g(r)\beta_h(r)b-\delta_{g,h}\beta_{g}(r)b\rVert < \frac{\varepsilon}{|G|^2}$, \label{prop:uniqueness:proof-d}
    \item $\lVert \sum_{g\in G}\beta_g(r)b-b\lVert < \varepsilon$, \label{prop:uniqueness:proof-e}
    \end{enumerate}
for all $g,h\in G$ and all 
    \[
    b\in \bigcup_{g,h\in G,\Phi\in \{\phi,\psi\}} \Phi_{gh}(\fu_{g,h}^*\cF) \bigcup_{g\in G,\Phi\in \{\phi,\psi\}}\{\Phi_g(e)\}\cup \{s,s^*\}.
    \]
Now, we let
    \[
    v=\sum_{g\in G}\psi_g(e)^*\beta_g(sr)\phi_g(e)\in B.
    \]
For every $a\in \cF$, we observe
{\small
    \begin{align*}
    &\beta_g(v)\phi_g(a)v^*\\
    &\quad =\sum_{h\in G}\beta_g(\psi_h(e)^*)\fv_{g,h}\beta_{gh}(sr)\fv_{g,h}^*\beta_g(\phi_h(e))\phi_g(a)v^*\\
    &\quad =\sum_{h\in G} \beta_g(\psi_h(e)^*)\fv_{g,h}\beta_{gh}(sr) \phi_{gh}\big( \fu_{g,h}^*\alpha_g(e)a \big) v^*\\
    &\stackrel{\ref{prop:uniqueness:proof-a},\ref{prop:uniqueness:proof-b}}{\quad =_{2\varepsilon}} \sum_{h\in G} \beta_g(\psi_h(e)^*)\fv_{g,h}\beta_{gh}(sr)\phi_{gh}(\fu_{g,h}^*a)v^*\\
    & \quad = \sum_{h,f\in G} \beta_g(\psi_h(e)^*)\fv_{g,h}\beta_{gh}(sr)\phi_{gh}(\fu_{g,h}^*a) \phi_f(e)^* \beta_f(sr)^* \psi_f(e) \\
    &\stackrel{ \ref{prop:uniqueness:proof-c}}{\quad =_{2\varepsilon}} \sum_{h,f\in G} \beta_g(\psi_h(e)^*)\fv_{g,h}\beta_{gh}(s)\phi_{gh}(\fu_{g,h}^*a) \phi_f(e)^* \beta_{gh}(r)\beta_f(r) \beta_f(s)^* \psi_f(e) \\
    &\stackrel{\ref{prop:uniqueness:proof-c},\ref{prop:uniqueness:proof-d}}{\quad =_{3\varepsilon}}\sum_{h\in G} \beta_g(\psi_h(e)^*)\fv_{g,h}\beta_{gh}(s)\phi_{gh}(\fu_{g,h}^*a)\phi_{gh}(e)^*\beta_{gh}(s^*)\psi_{gh}(e) \beta_{gh}(r)\\
    &\stackrel{\ \eqref{eqn:leftinnerproduct}}{\quad =_{\ }}  \sum_{h\in G}\beta_g(\psi_h(e)^*)\fv_{g,h}\beta_{gh}(s)\beta_{gh}\phi_{1} \alpha_{gh}^{-1}(\fu_{g,h}^*ae) \beta_{gh}(s^*)\psi_{gh}(e)\beta_{gh}(r)\\
    &\stackrel{\ \ref{prop:uniqueness:proof-b}}{\quad =_{\varepsilon}} \sum_{h\in G}\beta_g(\psi_h(e)^*)\fv_{g,h}\beta_{gh}(s)\beta_{gh}\phi_{1} \alpha_{gh}^{-1}(\fu_{g,h}^*a) \beta_{gh}(s^*)\psi_{gh}(e)\beta_{gh}(r)\\
    &\stackrel{\ \eqref{prop:uniqueness:proof-s}}{\quad =_{\varepsilon}} \sum_{h\in G}\beta_g(\psi_h(e)^*)\fv_{g,h}\beta_{gh}\psi_{1}\alpha_{gh}^{-1}(\fu_{g,h}^*a)\psi_{gh}(e)\beta_{gh}(r)\\
    &\quad =  \sum_{h\in G}\beta_g(\psi_h(e)^*)\fv_{g,h}\psi_{gh}(\fu_{g,h}^*ae)\beta_{gh}(r)\\
    &\quad = \sum_{h\in G}\beta_g(\psi_h(e)^*)\beta_g\psi_h\alpha_g^{-1}(a)\psi_h(e)\beta_{gh}(r)\\
    &\quad =\sum_{h\in G}\beta_g\psi_1(e\alpha_g^{-1}(a))\psi_g(e)\beta_{gh}(r)\\
    &\stackrel{\ref{prop:uniqueness:proof-a},\ref{prop:uniqueness:proof-b}}{\quad =_{2\varepsilon}}\sum_{h\in G}\psi_g(ae)\beta_{gh}(r)\\
    &\stackrel{\ \ref{prop:uniqueness:proof-b}}{\quad =_{\varepsilon}}\sum_{h\in G}\psi_g(a)\beta_h(r)\\
    &\stackrel{\ \ref{prop:uniqueness:proof-e}}{\quad =_{\varepsilon}}\psi_g(a).
    \end{align*}
    }
Similarly, it follows from the symmetry of the above argument in swapping $\psi$ with $\phi$ and $s^*$ with $s$ that also 
    \[
    \beta_g(v^*)\psi_g(a)v=_{13\varepsilon}\phi_g(a)
    \]
for all $a\in \cF$. As $\varepsilon$ and $\cF$ are arbitrary the result follows.
\end{proof}
\begin{corollary} \label{cor: uniqueness}
Let $(A,\alpha,\fu)$ and $(B,\beta,\fv)$ be anomalous actions of a finite group $G$ on separable C$^*$-algebras such that $(B,\beta,\fv)$ has the Rokhlin property.
Let $\phi,\psi:(A,\alpha,\fu)\rightarrow (B,\beta,\fv)$ be two $G$-coherent morphisms.
Assume that either one of the following is true:
	\begin{enumerate}[leftmargin=*,label=\textup{(\roman*)}]
	\item $B$ is stable. \label{cor: uniqueness:1}
	\item $A$ and $B$ are unital and the $*$-homomorphisms $\phi_1, \psi_1$ are unital. \label{cor: uniqueness:2}
    \item $B$ has almost stable rank one. \label{cor: uniqueness:3}
	\end{enumerate}
Then $\phi_1\approx_{\mathrm{pu}}\psi_1$ if and only if $\phi\approx_{\mathrm{pu}}^G\psi$.
\end{corollary}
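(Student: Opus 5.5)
The plan is to reduce the statement to Proposition~\ref{prop:uniqueness} combined with Proposition~\ref{prop:stablemvn} (and Remark~\ref{rmk:unitalapproxmvn} in the unital case). We first handle the ``if'' direction. Suppose $\phi\approx_{\mathrm{pu}}^G\psi$, witnessed by unitaries $u_n\in\cU(\eins+B)$. Specialising the defining relation to $g=1$ gives $u_n\phi_1(a)u_n^*\to\psi_1(a)$, which is precisely $\phi_1\approx_{\mathrm{pu}}\psi_1$. Here we use that $\beta_1=\id$; this holds since $\beta_1\beta_1=\Ad(\fv_{1,1})\beta_1=\beta_1$, so $\beta_1$ is idempotent and invertible.

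For the ``only if'' direction, assume $\phi_1\approx_{\mathrm{pu}}\psi_1$. If $(u_n)$ witnesses this, then setting $v_n=u_n^*\in\cU(\eins+B)$ we have $v_n^*\phi_1(a)v_n\to\psi_1(a)$. Moreover, since the $u_n$ are unitaries, the reverse relation $v_n\psi_1(a)v_n^*\to\phi_1(a)$ follows automatically. One small point is that the multipliers $v_n$ lie in $\eins+B$ rather than in $B$, whereas Definition~\ref{defn:MvN} asks for elements of $B$. To fix this we replace $v_n$ by $v_ne_k$ for a suitable approximate unit $(e_k)$ of $B$, chosen along a diagonal subsequence. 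Because $\psi_1(a)\in B$, we have $\psi_1(a)e_k\approx\psi_1(a)$ and $e_k\psi_1(a)\approx\psi_1(a)$. After choosing $k=k(n)$ large, both required approximations therefore hold, and we obtain $\phi_1\approx_{\mathrm{MvN}}\psi_1$.

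Proposition~\ref{prop:uniqueness} then gives $\phi\approx_{\mathrm{MvN}}^G\psi$, using that $A$ and $B$ are separable and that $\beta$ is Rokhlin. In cases \ref{cor: uniqueness:1} and \ref{cor: uniqueness:3}, Proposition~\ref{prop:stablemvn}(i) upgrades this to $\phi\approx_{\mathrm{pu}}^G\psi$. In case \ref{cor: uniqueness:2}, Remark~\ref{rmk:unitalapproxmvn} does the same job: it converts approximate Murray--von Neumann equivalence of unital $G$-coherent morphisms into approximate unitary equivalence. Here $B$ is unital, so this coincides with proper approximate unitary equivalence, and $\cU(\eins+B)=\cU(B)$.

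The main subtlety is not in the structure of the argument but in checking the hypotheses. We must confirm that the witnesses $v_n$ can be taken inside $B$, which is the approximate-unit trick above; in the unital case this is trivial. We also need to check that the remark indeed yields $\phi_g$-level convergence for all $g$, not merely for $g=1$. This holds because the sequence $v_n$ produced by the Murray--von Neumann equivalence already works simultaneously for all $g\in G$, and perturbing it to nearby unitaries preserves these limits. Everything else is a direct combination of earlier results.
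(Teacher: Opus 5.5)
Your proposal is correct and follows essentially the same route as the paper. The paper combines Proposition~\ref{prop:uniqueness} with Proposition~\ref{prop:stablemvn} in cases (i) and (iii), and with Remark~\ref{rmk:unitalapproxmvn} in case (ii). You also spell out two routine points the paper leaves implicit: the trivial direction via $\beta_1=\mathrm{id}$, and the passage from $\phi_1\approx_{\mathrm{pu}}\psi_1$ to $\phi_1\approx_{\mathrm{MvN}}\psi_1$ by cutting the unitaries down with an approximate unit.
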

\begin{proof}
\ref{cor: uniqueness:1} and \ref{cor: uniqueness:3} follow as a direct consequence of Proposition \ref{prop:uniqueness} and \cref{prop:stablemvn}.

\ref{cor: uniqueness:2}: follows as a direct consequence of Proposition \ref{prop:uniqueness} and \cref{rmk:unitalapproxmvn}.
\end{proof}

It will be important for us to understand when there exists a $G$-coherent morphism between two anomalous actions with the Rokhlin property.
The following result does this for us.

\begin{theorem} \label{thm: existence}
Let $(A,\alpha,\fu)$ and $(B,\beta,\fv)$ be anomalous actions of a finite group $G$ on separable C$^*$-algebras such that $(B,\beta,\fv)$ has the Rokhlin property and $B$ is stable or has almost stable rank one.
Let $\varphi:A\rightarrow B$ be a $*$-homomorphism such that $\varphi\alpha_g\approx_{\mathrm{pu}} \beta_g \varphi$ for all $g\in G$ and $o(\alpha,\fu)=o(\beta,\fv)$.
Then there exists a $G$-coherent morphism $\phi:(A,\alpha,\fu)\rightarrow (B,\beta,\fv)$ such that $\phi_1\approx_{\mathrm{pu}}\varphi$.
\end{theorem}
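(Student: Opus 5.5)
The plan is to construct $\phi$ in two stages. First I would produce a $G$-coherent morphism into a sequence algebra, that is, an \emph{approximately} $G$-coherent family of maps with $\phi_1\approx\varphi$. Then I would upgrade it to an honest $G$-coherent morphism by a one-sided Elliott-type intertwining, using the uniqueness result (Proposition \ref{prop:uniqueness} together with Proposition \ref{prop:stablemvn}).

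\textbf{Stage one: first approximations.} By hypothesis, for each $g\in G$ there are unitaries $u_g^{(n)}\in\cU(\eins+B)$ with $\Ad(u_g^{(n)})\beta_g\varphi\to\varphi\alpha_g$ pointwise; we normalise $u_1^{(n)}=\eins$. The naive candidate $\phi_g=u_g^*\varphi$ satisfies condition \ref{item:1weakcocyc} of Definition \ref{defn:weakcocyclemorphism} approximately. The defect in condition \ref{item:2weakcocyc} is measured by
\[
c_{g,h}=u_g\beta_g(u_h)\fv_{g,h}u_{gh}^*\varphi^{+}(\fu_{g,h})^*,
\]
restricted to $\varphi(A)$. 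Because $o(\alpha,\fu)=o(\beta,\fv)$, the scalar $3$-cocycles cancel. Consequently $c$ is, asymptotically and on $\varphi(A)$, a unitary $2$-cocycle for the twisted action $\Ad(u)\beta$, and it commutes approximately with $\varphi(A)$.

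\textbf{Trivialising the defect.} I would remove $c$ with a Rokhlin averaging in the style of Izumi and Gardella--Santiago. Choose Rokhlin elements $r$ for $\beta$ via Lemma \ref{lem:Rp}, chosen approximately central relative to a finite set containing $\varphi(\cF)$, the $u$'s and $\fv$'s. Then set
\[
w_g=\sum_{k\in G} c_{g,k}^{*}\,\beta_{gk}(r^{1/2})\ \text{(suitably twisted)},
\]
which is the usual formula killing a $2$-cocycle on a Rokhlin tower. The candidate is $\phi_g(a)=(w_gu_g)^*\varphi(a)$. These elements need not be unitaries in $\cM(B)$, only contractions that are approximately partial isometries on $\varphi(A)$; the flexibility of $G$-coherent morphisms (conditions \ref{item:1weakcocyc} and \ref{item:2weakcocyc} only being tested against elements of $A$) is what makes this acceptable. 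The outcome of stage one is a sequence $\phi^{(n)}$ of tuples that satisfy the defining identities up to $\varepsilon_n\to0$ on growing finite sets, with $\phi^{(n)}_1\approx\varphi$. A computation of the same shape as in the proof of Proposition \ref{prop:uniqueness} verifies this.

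\textbf{Stage two: perturbing to an exact morphism.} Here I would use Lemma \ref{lem:sequencesettingLemma}, valid since $B$ is stable or of almost stable rank one, to replace the approximate partial isometries by unitaries in $\cU(\eins+B)$ without losing the estimates on $\varphi(A)$. Then I would run a Cauchy-sequence argument in the Polish topology on $G$-coherent (and approximately $G$-coherent) maps described before Theorem \ref{thm: Elliott intertwining}. The key input is an approximate version of Proposition \ref{prop:uniqueness}: two approximate $G$-coherent morphisms whose first components are close are conjugate, on the relevant finite set, by some $\beta_g(w)(\cdot)w^*$ with $w\in\cU(\eins+B)$. At step $n$ I would conjugate $\phi^{(n+1)}$ so that it is within $2^{-n}$ of the already corrected $\phi^{(n)}$. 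The corrected sequence then converges pointwise to maps $\phi_g$. Conditions \ref{item:1weakcocyc} and \ref{item:2weakcocyc} pass to the limit, and $\phi_1\approx_{\mathrm{pu}}\varphi$ because every correcting unitary lies in $\cU(\eins+B)$.

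\textbf{Expected obstacle.} I expect the main difficulty to be stage two. The approximate-to-exact passage requires controlling the correcting unitaries uniformly while the multiplier-valued cocycles $\fu,\fv$ are only handled via approximate units quasicentral for $D$ (as in the proof of Proposition \ref{prop:uniqueness}). This is precisely where I would rely on Lemma \ref{lem:sequencesettingLemma} to keep everything inside $\cU(\eins+B)$.
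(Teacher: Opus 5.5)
Your outline is correct, but it organises the argument differently from the paper. Your Stage one is essentially the construction inside Lemma~\ref{lem:iterativelemma}. There, the element $w_{g,n}=\sum_{k}\beta_k(f_n)\beta_g(\bs_{g^{-1}k}^*)\fv_{g,g^{-1}k}\bs_k\bw_k\phi_1(\fu_{g,g^{-1}k}^*e_n)$ is precisely a Rokhlin-weighted coboundary for your defect $c$. Lemma~\ref{lem:sequencesettingLemma} then replaces it by a unitary, so that every approximate morphism has the form $\wv_g\varphi(\cdot)\wv_1^*$; this makes condition \ref{item:1weakcocyc} exact and keeps all maps contractive. Since $\varphi$ need not be extendible, $\varphi^{+}(\fu_{g,h})$ is not available, and your ``restricted to $\varphi(A)$'' has to be implemented as the paper does, via $\varphi(\fu_{g,h}^*e_n)$ for an approximate unit $e_n$.

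The genuine difference is Stage two. The paper never formulates an approximate uniqueness theorem. Instead it feeds each output back into Lemma~\ref{lem:iterativelemma}, choosing the auxiliary unitaries $\bs_g$ so that $\bs_g\phi_1(a)\approx\phi_g(a)$ for the current family. The resulting estimate \eqref{closenessapprox} bounds the change made at each step by the coherence defect of the previous step, so $\phi^{(n)}_g$ is Cauchy directly, with a single Rokhlin averaging per step.

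You instead build each approximate morphism from scratch and glue them by a one-sided intertwining based on a quantitative form of Proposition~\ref{prop:uniqueness} and Proposition~\ref{prop:stablemvn}. This is a legitimate and more modular route, reusing the uniqueness machinery, but you should make explicit the point it hinges on. Consider the statement: two $\delta$-coherent families whose first components agree up to $\delta$ on $F'$ are $\varepsilon$-conjugate on $F$ via some $\beta_g(w)(\cdot)w^*$ with $w\in\cU(\eins+B)$. The finite set $F'$ and tolerance $\delta$ here must depend only on $(A,\alpha,\fu)$, $F$ and $\varepsilon$, not on the families, because $\phi^{(n)}$ has to be constructed before $\phi^{(n+1)}$ is known. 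This does hold on inspecting the proof of Proposition~\ref{prop:uniqueness}: the element $e$ there depends only on $\alpha,\fu,F,\varepsilon$, while the Rokhlin element may be chosen after the maps. You also need approximate versions of \eqref{eqn:leftnorm}, \eqref{eqn:rightnorm} and of \cite[Lemma 3.8]{GA20}.

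Two smaller points. Consecutive first components are only unitary conjugates of $\varphi$, not close to each other, so you must first conjugate by the evident unitary in $\cU(\eins+B)$; this leaves coherence defects unchanged. Also, the flexibility of $G$-coherent morphisms is really needed at the end rather than in Stage one: only the maps converge, not the correcting unitaries, so the limit need not come from a cocycle morphism.
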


To prove this result we require the following technical lemma similar in flavour to \cite[Lemma 3.3]{IZU04I} (see also \cite[Proposition 3.4]{GASA16}).

\begin{lemma} \label{lem:iterativelemma}
Let $A$ and $B$ be separable \cstar-algebras and assume $B$ is either stable or has almost stable rank one.
Let $(A,\alpha,\fu)$ and $(B,\beta,\fv)$ be anomalous actions of a finite group $G$ with $o(\alpha,\fu)=o(\beta,\fv)$ such that $\beta$ has the Rokhlin property.
Suppose $\varphi:A\rightarrow B$ is a $*$-homomorphism such that $\varphi\alpha_g\approx_{\mathrm{pu}}\beta_g\varphi$ for all $g\in G$, and let $\bv: G\to \cU(\eins+B)$ be a map.
Then for all $\varepsilon>0$ and $\cF\subset A_{1}$ there exists a map $\wv: G\to \cU(\eins+B)$ such that, if we write $\phi_g(a)=\bv_g\varphi(a)\bv_1^*$ and $\widetilde{\phi}_g(a)=\wv_g\varphi(a)\wv_1^*$, one has the following for all $g\in G$ and all $a,b,c\in\cF$:
\begin{equation} \label{equivarianceapprox}
\fv_{g,h}^*\beta_g(\wf_h(a))\wf_g(b)=_{\varepsilon}\wf_{gh}(\fu_{g,h}^*\alpha_g(a)b),
\end{equation}
\begin{equation} \label{leftinnerapprox}
\wf_g(a)\wf_g(b)^*=_{\varepsilon}\beta_g\wf_1\alpha_g^{-1}(ab^*),
\end{equation}
\begin{equation} \label{closenessapprox}
\begin{multlined} 
    	\begin{array}{cl}
        \multicolumn{2}{l}{ \| \wf_g(\alpha_g(a)bc)-\phi_g(\alpha_g(a)bc)\| } \\
        \leq & \varepsilon+\lVert\beta_g(\phi_1(a))\phi_g(b)-\phi_g(\alpha_g(a)b)\rVert \\
        +&\displaystyle \max_{k\in G} \Big( \lVert \fv_{g,g^{-1}k}^*\beta_g(\phi_{g^{-1}k}(a))\phi_g(b)-\phi_k(\fu_{g,g^{-1}k}^*\alpha_g(a)b)\rVert\\
        +& \lVert \beta_k\phi_1\alpha_k^{-1}(\fu_{g,g^{-1}k}^*\alpha_g(a)bc)-\phi_k(\fu_{g,g^{-1}k}^*\alpha_g(a)b)\phi_k(c^*)^*\rVert \Big).
    	\end{array}
\end{multlined}
\end{equation}
\end{lemma}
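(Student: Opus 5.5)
We will construct $\wv$ by a Rokhlin averaging of "twisted" copies of the given $\bv$, in the spirit of \cite[Lemma 3.3]{IZU04I}. First fix the auxiliary data. Since $\varphi\alpha_g\approx_{\mathrm{pu}}\beta_g\varphi$, for each $g\in G$ there is a unitary $x_g\in\cU(\eins+B)$ with $\Ad(x_g)\beta_g\varphi\approx\varphi\alpha_g$ on a large finite set (containing $\cF$, the products of elements of $\cF$, the elements $\fu_{g,h}^*\alpha_g(a)b$, and their images under the $\alpha_k^{\pm1}$); we normalise $x_1=\eins$. The family $\psi_g(a)=x_g^*\varphi(a)$ then satisfies condition~\ref{item:1weakcocyc} of Definition~\ref{defn:weakcocyclemorphism} exactly. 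It also satisfies condition~\ref{item:2weakcocyc} up to the defect $w_{g,h}=x_g\beta_g(x_h)\fv_{g,h}x_{gh}^*\varphi^{+}(\fu_{g,h})^*$, which approximately commutes with $\varphi(A)$. The hypothesis $o(\alpha,\fu)=o(\beta,\fv)$ implies that $w$ approximately satisfies the $2$-cocycle identity relative to $\Ad(x)\beta$, as seen on $\varphi(A)$.

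The core step is to define, for a Rokhlin element $r\in B^+_1$ chosen via Lemma~\ref{lem:Rp} to be approximately central for all relevant elements (the $\bv_g$, $x_g$, $\fv_{g,h}$ compressed by $\varphi(\cF)$, and so on), the element
\[
y_g=\sum_{k\in G}\beta_k(r)^{1/2}\,\bv_{k}\,\cdot(\text{correction built from } x,\fv,\bv)\cdot\beta_{k}(r)^{1/2}.
\]
This mixes the old $\bv$ on the tower level $\beta_k(r)$ with the cocycle corrections $\beta_k(\cdots)\fv_{k^{-1},g}\ldots$. The correction is chosen so that on the level $\beta_{k}(r)$ the exact identity~\ref{prop:leftinnerproduct:d} of Proposition~\ref{prop:leftinnerproduct} holds. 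In this way the Rokhlin decomposition "untwists" the multiplier cocycle, and matching anomalies is exactly what makes the levels glue: the scalar $3$-cocycle terms cancel. The element $y_g$ is then an approximate unitary on $\varphi(A)$, in the sense that $(\eins-y_g^*y_g)\varphi(a)\approx0$. Since $y_g$ need not lie in $\cU(\eins+B)$, we apply Lemma~\ref{lem:sequencesettingLemma}; this is exactly where stability or almost stable rank one of $B$ enters. It produces $\wv_g\in\cU(\eins+B)$ with $(\wv_g-y_g)e$ small for $e$ in a finite set of elements of the form $\varphi(\cdot)$, and we then set $\wf_g(a)=\wv_g\varphi(a)\wv_1^*$.

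Verifying \eqref{equivarianceapprox} and \eqref{leftinnerapprox} is then a bookkeeping computation of the same type as in Proposition~\ref{prop:uniqueness}. We use the Rokhlin relations (c)–(e) there to collapse double sums over tower levels to single sums, together with the norm identities \eqref{eqn:leftnorm} and \eqref{eqn:rightnorm} to replace $\wv$ by $y$. Estimate \eqref{closenessapprox} comes from the same expansion, this time comparing against $\phi$. On the level $\beta_k(r)$, the difference $\wf_g(\alpha_g(a)bc)-\phi_g(\alpha_g(a)bc)$ is controlled by how far $\phi$ fails the three coherence relations at the indices $(g,g^{-1}k)$. Those failures are exactly the three defect terms appearing on the right-hand side. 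Because the levels are approximately orthogonal, the norm of the sum is bounded by the maximum over $k$ plus $\varepsilon$; this is why a $\max_{k\in G}$ appears rather than a sum. The extra first term accounts for the level $k=g$, where $\wv$ is compared directly with $\bv$.

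The main obstacle is finding the correct explicit formula for $y_g$ in terms of $\bv$, $x$, $\fv$ and $r$. It must simultaneously give exact coherence in the limit and give the closeness estimate \eqref{closenessapprox} with precisely those defect terms, and the anomaly cancellation has to be tracked with multiplier-valued $\fv$ that do not lie in $B$. My first attempt will be to take $y_g=\sum_k \fv_{g,g^{-1}k}^*\beta_g(\bv_{g^{-1}k})\bv_g\bv_k^*\,\beta_k(r)\,\bv_k$-type expressions, read off from requiring \ref{prop:leftinnerproduct:d} on each level, and then adjust by $x_g$.
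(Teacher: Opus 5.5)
Your overall architecture matches the paper's. You average level-wise corrected elements over a Rokhlin tower, then use Lemma~\ref{lem:sequencesettingLemma} to replace the average by a unitary in $\cU(\eins+B)$ that agrees with it on the relevant part of $\varphi(A)$. You let the common anomaly cancel, and you use approximate orthogonality of the levels to get a $\max_{k\in G}$ in \eqref{closenessapprox}.

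The genuine gap is that the only nontrivial input of the lemma, the averaged element itself, is left as ``(correction built from $x,\fv,\bv$)''. You say yourself that finding it is the main obstacle. Without it, none of \eqref{equivarianceapprox}--\eqref{closenessapprox} can actually be checked, so this is an outline rather than a proof.

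The formula you propose to start from also points in the wrong direction, because it contains neither an intertwiner nor any $\fu$-term. Take genuine actions ($\fu=\fv=\eins$) and $\bv\equiv\eins$. Then your formula reduces to $y_g=\sum_k\beta_k(r)$, so $\wf_g\approx\varphi$, and \eqref{leftinnerapprox} would force $\varphi\approx\beta_g\varphi\alpha_g^{-1}$ on $\cF\cF^*$. Since $\bv$ is an arbitrary map unrelated to $\alpha$ and $\beta$, it can only act as a gauge for \eqref{closenessapprox}. All of the coherence has to come from the intertwiners, and how they enter is exactly what you leave open.

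Here is what works, and it is what the paper does. Write $\phi_g=\bs_g\phi_1$ with $\bs_g=\bv_g\bv_1^*\in\cU(\eins+B)$. Choose $\bw_k\in\cU(\eins+B)$ with $\Ad(\bs_k\bw_k)\phi_1\approx\beta_k\phi_1\alpha_k^{-1}$ on a suitable finite set. This plays the role of your $x_k^*$, but it is indexed by the tower level $k$, not by $g$. Then set
\[
w_g=\sum_{k\in G}\beta_k(f)\,\beta_g(\bs_{g^{-1}k}^*)\,\fv_{g,g^{-1}k}\,\bs_k\bw_k\,\phi_1(\fu_{g,g^{-1}k}^*e),
\]
with $f$ a Rokhlin element and $e$ taken from an approximate unit of $A$.

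On the level $\beta_k(f)$ one then has $\wf_g(a)\approx\beta_g(\bs_{g^{-1}k}^*)\fv_{g,g^{-1}k}\,\beta_k\phi_1\alpha_k^{-1}(\fu_{g,g^{-1}k}^*a)\,\bs_k$. In \eqref{equivarianceapprox}, the outer factors combine to $\fv_{g,h}^*\beta_g(\fv_{h,h^{-1}k})\fv_{g,k}=\lambda_{g,h,h^{-1}k}\fv_{gh,h^{-1}k}$. Inside $\phi_1$, the factors combine to $\fu_{g,k}^*\alpha_g(\fu_{h,h^{-1}k}^*)=\overline{\lambda_{g,h,h^{-1}k}}\,\fu_{gh,h^{-1}k}^*\fu_{g,h}^*$. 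So the $\fu$-factor is indispensable for the anomaly cancellation.

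The gauge factors $\beta_g(\bs_{g^{-1}k}^*)$ and $\bs_k$ play no role in coherence. They are what make the level-$k$ comparison with $\phi_g$ produce exactly the defect terms of \eqref{closenessapprox}. The first term there comes from replacing $\phi_g(\alpha_g(a)bc)$ by $\beta_g(\phi_1(a))\phi_g(b)\phi_1(c)$ uniformly on all levels, not from a distinguished level $k=g$ as you suggest.

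Finally, $\varphi^{+}(\fu_{g,h})$ in your defect $w_{g,h}$ is not available, because $\varphi$ need not be extendible. This is why one works with $\phi_1(\fu^*e)\in B$ instead.
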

\begin{proof}
Let us assume $\varepsilon<1$.
Consider the finite sets
\[
\cV_A=\{\fu_{g,h}:g,h\in G\}\subset\cU\cM(A), \quad \cV_B=\{\fv_{g,h}:g,h\in G\}\subset\cU\cM(B).
\]
For the remainder of the proof we consider the maps $\phi_g$ for $g\in G$ as in the statement, which are contractive linear maps and $\phi_1$ is a $*$-homomorphism. 

Let $e_n$ be an idempotent approximate unit for $A$. 
Then we observe for all $a\in A$ and $g\in G$ that
\[
\phi_g(e_n)^*\phi_g(e_n)\phi_1(a)=\phi_1(e_n^2a)\rightarrow \phi_1(a).
\]
Thus, by Lemma  \ref{lem:sequencesettingLemma}, if $n\in \bN$ is sufficiently large, there exist unitaries $\bs_g\in \cU(\eins+B)$ for $g\in G$ such that
\begin{equation} \label{lem:iterativelemma:eq-1}
    e_{n}a=_{\frac{\varepsilon}{2|G|^2}}a,\quad \bs_g\phi_1(a)=_{\varepsilon/2}\phi_g(e_{n})\phi_1(a)=_{\varepsilon/2}\phi_g(a), 
\end{equation}
for all $a\in \bigcup_{g\in G}\alpha_g^{-1}(\cF)\cup \alpha_g^{-1}(\cF^*)\cup\ \cV_A\alpha_g(\cF)\cF$ and all $n\geq n_0$.
By hypothesis, there exist unitaries $\bw_g\in \cU(\eins+B)$ such that
\begin{equation} \label{lem:iterativelemma:eq-choice-bw}
    \Ad(\bw_g)\phi_1(a)=_{\varepsilon/|G|}\Ad(\bs_g^*)\beta_g\phi_1\alpha_g^{-1}(a)
\end{equation}
for all $g\in G$ and $a\in \bigcup_{g\in G, i=1,2} \cV_A^*\alpha_g(\cF\cF^*)\cV_A\cup \cV_A^*\cF\cup (\cU^*)^2\alpha_g(\cF)\cF^i$.

Using Lemma~\ref{lem:Rp}, let $f_n\in B_1$ be a sequence of positive contractions such that for all $b\in B$ and $g,h\in G$,
\begin{enumerate}[leftmargin=*,label=\textup{(\alph*')}]
    \item $\|f_nb-bf_n\|\to 0$,\label{enum:seqrokhlin1}
    \item $\|\beta_g(f_n)\beta_h(f_n)b-\delta_{g,h}\beta_g(f_n)b\|\to 0$,\label{enum:seqrokhlin2}
    \item $\|\sum_{k\in G}\beta_k(f_n)b-b\|\to 0$\label{enum:seqrokhlin3}.
\end{enumerate}
For each $n\geq n_0$, consider the contractions
\[
w_{g,n}=\sum_{k\in G}\beta_k(f_n)\beta_g(\bs_{g^{-1}k}^*)\fv_{g,g^{-1}k}\bs_k \bw_k \phi_1(\fu_{g,g^{-1}k}^*e_n).
\]
Then for all $a\in A$, $g\in G$ and $\delta>0$ one has a sufficiently large $n\in \bN$ such that
\begin{align*}
    &w_{g,n}^*w_{g,n}\phi_1(a)\\   
    &\quad\ \ =\sum_{k,k'\in G}\phi_1(e_n\fu_{g,g^{-1}k})\bw_k^*\bs_k^*\fv_{g,g^{-1}k}^*\beta_g(\bs_{g^{-1}k})\beta_k(f_n)\beta_{k'}(f_n)\beta_g(s_{g^{1}k'}^*)\fv_{g,g^{-1}k'}\\
    &\quad\quad\quad\quad\quad \bw_{k'}\phi_1(\fu_{g,g^{-1}k'}^*e_na)\\    
    &\quad\ \ =_{\delta}\sum_{k,k'\in G}\phi_1(e_n\fu_{g,g^{-1}k})\bw_k^*\bs_k^*\fv_{g,g^{-1}k}^*\beta_g(\bs_{g^{-1}k})\beta_k(f_n)\beta_{k'}(f_n)\beta_g(s_{g^{1}k'}^*)\fv_{g,g^{-1}k'}\\&\quad\quad\quad\quad\quad \bw_{k'}\phi_1(\fu_{g,g^{-1}k'}^*a)\\    &\stackrel{\ \ \ref{enum:seqrokhlin1},\ref{enum:seqrokhlin2}}{\quad =_{\delta}}\sum_{k\in G}\phi_1(e_na)\beta_k(f_n)\\
    &\quad\ \ =_\delta \sum_{k\in G}\phi_1(a)\beta_k(f_n)\\
    &\quad\ \stackrel{\ref{enum:seqrokhlin3}}{\ =_\delta}\phi_1(a).
\end{align*}
Thus, by Lemma \ref{lem:sequencesettingLemma} there is $n_1>n_0$ and a unitary $\wv_g\in \cU(\eins+B)$ such that
\[
w_{g,n_1}\phi_1(a)=_{\varepsilon/2|G|}\wv_g\phi_1(a)
\]
for all
\[
a\in \bigcup_{g\in G,i=1,2,j,l,t=0,1,2}(\cF^*)^t \cV_A^*\alpha_g(\cF^i(\cF^*)^j)\cV_A\cF^l.
\]
Here we set $\cF^0=\{\eins\}$ for convenience. 

Choose an element $e\in A_1^+$ such that
\[
\max\{\lVert d(\eins-e) \rVert,\lVert (\eins-e)d\rVert\}\leq \frac{\varepsilon}{2|G|}
\]
for all elements of the form
\[
d=\beta_g(\bs_k^*)\fv_{g,k}^* \bs_{gk} \bw_{gk}\phi_1(\fu_{g,k}^*\alpha_g(a)bc) \bw_{gk}^*-\phi_g(\alpha_g(a)bc)
\] 
for any $g,k\in G$ and $a,b,c\in \cF$. 
Without loss of generality we may also assume that $n_1$ is large enough such that
\begin{enumerate}[leftmargin=*,label=\textup{(\alph*)}]
    \item $\beta_g(f_{n_1})b=_{\varepsilon/|G|^2}b\beta_g(f_{n_1})$, \label{lem:iterativelemma:eq-a}
    \item $\beta_g(f_{n_1})\beta_h(f_{n_1})b=_{\varepsilon/|G|^2}\delta_{g,h}\beta_g(f_{n_1})b$, \label{lem:iterativelemma:eq-b}
    \item $\sum_{k\in G}\beta_k(f_{n_1})b =_{\varepsilon} b$, \label{lem:iterativelemma:eq-c}
\end{enumerate}
for all $g,h\in G$ and $b\in \cG$, where
\begin{align*}
    \cG=&\bigcup_{g,h,l\in G,i\in\{1,2\},j\in \{0,1,2\}}\cV_B^*\beta_g(\cV_B \bs_h \bw_h\phi_1(\cV_A^* \alpha_l^{-1}(\cF^i)\cF^j))\cV_B\\
    &\bigcup_{g,h\in G}\beta_g( \bs_h-\eins) \bigcup_{g,k\in G}\beta_g( \bw_k-\eins) \bigcup_{g\in G}\phi_g(\alpha_g(\cF)\cF^2)\cup \{e\}.
\end{align*}
Let 
\[
\wf_g(a)=\wv_g\phi_1(a)\wv_1^*,\quad g\in G.
\]
We observe for every 
\[
a\in \bigcup_{g\in G,i=1,2,j,l=0,1,2}(\cF^*)^t \cV_A^*\alpha_g(\cF^i(\cF^*)^j) \cV_A \cF^l \quad\text{and}\quad g\in G
\]
that
\begin{align} \label{eqn:twiddle}
    \wf_g(a)&=_{\varepsilon} w_{g,n_1}\phi_1(a) w_{1,n_1}^*\\
    &=_{\varepsilon}\sum_{k,k'\in G}\beta_k(f_{n_1})\beta_g( \bs_{g^{-1}k}^*)\fv_{g,g^{-1}k} \bs_k \bw_k \phi_1(\fu_{g,g^{-1}k}^*a) \bw_{k'}^ *\beta_{k'}(f_{n_1})\notag\\
    &=_{4\varepsilon} \sum_{k\in G} \beta_k(f_{n_1}) \beta_g( \bs_{g^{-1}k}^*)\fv_{g,g^{-1}k} \bs_k \bw_k \phi_1(\fu_{g,g^{-1}k}^*a) \bw_{k}^*. \notag
\end{align}
We proceed to check conditions \eqref{equivarianceapprox}--\eqref{closenessapprox} of the claim. Let $C=6(1+|G|)$ hereinafter. Given $a,b\in \cF$ and $g\in G$, one has 
{\small
\begin{align*}
    &\fv_{g,h}^*\beta_g(\wf_h(a))\wf_g(b)\\
     &\ \ \, \stackrel{\eqref{eqn:twiddle}\quad}{  =_{C\varepsilon}}\sum_{k,k'\in G}\fv_{g,h}^* \beta_g\beta_k(f_{n_1})\beta_g\left(\beta_h( \bs_{h^{-1}k}^*) \fv_{h,h^{-1}k} \bs_k \bw_k \phi_1(\fu_{h,h^{-1}k}^*a) \bw_k^*\right)\\
    &\quad\quad\quad\quad\quad\quad  \beta_{k'}(f_{n_1})\beta_g( \bs_{g^{-1}k'}^*)\fv_{g,g^{-1}k'}\bs_{k'} \bw_{k'} \phi_1(\fu_{g,g^{-1}k'}^*b) \bw_{k'}^*\\
    & \ \ \ \stackrel{\ref{lem:iterativelemma:eq-a}\quad}{=_{2\varepsilon}} \sum_{k,k'\in G} \fv_{g,h}^*\beta_{gk}(f_{n_1})\beta_g\left(\beta_h( \bs_{h^{-1}k}^*) \fv_{h,h^{-1}k} \bs_k \bw_k \phi_1(\fu_{h,h^{-1}k}^*a) \bw_k^*\right) \beta_{k'}(f_{n_1})\\
    &\quad\quad\quad\quad\quad  \beta_g( \bs_{g^{-1}k'}^*)\fv_{g,g^{-1}k'} \bs_{k'} \bw_{k'}\phi_1(\fu_{g,g^{-1}k'}^*b) \bw_{k'}^*\\
    &\,\stackrel{ \ref{lem:iterativelemma:eq-a},\ref{lem:iterativelemma:eq-b}}{\quad =_{2\varepsilon}} \sum_{k\in G}\fv_{g,h}^*\beta_{gk}(f_{n_1})\beta_g\left(\beta_h( \bs_{h^{-1}k}^*)\fv_{h,h^{-1}k} \bs_k \bw_k \phi_1(\fu_{h,h^{-1}k}^*a) \bw_k^*\right) \beta_g( \bs_{k}^*)\\
    &\quad\quad\quad\quad\quad \fv_{g,k}\bs_{gk} \bw_{gk} \phi_1(\fu_{g,k}^*b) \bw_{gk}^*\\
    & \quad \stackrel{\ref{lem:iterativelemma:eq-a}\quad }{=_{2\varepsilon}} \sum_{k\in G} \beta_{gk}(f_{n_1})\fv_{g,h}^* \beta_g\left(\beta_h( \bs_{h^{-1}k}^*)\fv_{h,h^{-1}k} \bs_k \bw_k \phi_1(\fu_{h,h^{-1}k}^*a) \bw_k^*\right) \beta_g( \bs_{k}^*)\\
    &\quad\quad\quad\quad\quad \fv_{g,k}\bs_{gk} \bw_{gk} \phi_1(\fu_{g,k}^*b) \bw_{gk}^*\\
    &\;\ \,\stackrel{\eqref{lem:iterativelemma:eq-choice-bw}\quad}{=_{2\varepsilon}} \sum_{k\in G}  \beta_{gk}(f_{n_1}) \fv_{g,h}^* \beta_g\beta_h( \bs_{h^{-1}k}^*)\beta_g(\fv_{h,h^{-1}k})\beta_g\beta_k\phi_1\alpha_k^{-1}(\fu_{h,h^{-1}k}^*a)\fv_{g,k}\\
    &\quad\quad\quad\quad\quad \beta_{gk}\phi_1\alpha_{gk}^{-1}(\fu_{g,k}^*b) \bs_{gk}\\
    &\quad\, \stackrel{\eqref{item:multiplicativity}}{=} \sum_{k\in G} \beta_{gk}(f_{n_1})\beta_{gh}( \bs_{h^{-1}k}^*)\fv_{g,h}^*\beta_g(\fv_{h,h^{-1}k})\fv_{g,k}\beta_{gk}\phi_1\alpha_{gk}^{-1}\big( \fu_{g,k}^* \alpha_g(\fu_{h,h^{-1}k}^*)\alpha_g(a)  b) \big)\\
    &\quad\quad\quad\quad\quad\bs_{gk} \\
    &\quad\, \stackrel{\eqref{item:anomaly}}{=} \sum_{k\in G} \beta_{gk}(f_{n_1})\beta_{gh}( \bs_{h^{-1}k}^*)\fv_{gh,h^{-1}k}\beta_{gk}\phi_1\alpha_{gk}^{-1}(\fu_{gh,h^{-1}k}^*\fu_{g,h}^*\alpha_g(a)b)) \bs_{gk}\\
    &\quad \stackrel{\eqref{lem:iterativelemma:eq-choice-bw}\ }{=_{\varepsilon}} \sum_{k\in G} \beta_{gk}(f_{n_1})\beta_{gh}( \bs_{h^{-1}k}^*)\fv_{gh,h^{-1}k} \bs_{gk} \bw_{gk}\phi_1(\fu_{gh,h^{-1}k}^*\fu_{g,h}^*\alpha_g(a)b) \bw_{gk}^* \\
    &\quad\:\, = \sum_{k\in G} \beta_{k}(f_{n_1})\beta_{gh}( \bs_{h^{-1}g^{-1} k}^*)\fv_{gh,h^{-1}g^{-1} k} \bs_{k} \bw_{k}\phi_1(\fu_{gh,h^{-1}g^{-1}k}^*\fu_{g,h}^*\alpha_g(a)b) \bw_{k}^*
\end{align*}
}
Here we have used in the penultimate step that the anomalies for $\alpha$ and $\beta$ are assumed to coincide.
If we apply \eqref{eqn:twiddle} with $gh$ in place of $g$ to the last expression, we can see that 
\[
\fv_{g,h}^*\beta_g(\wf_h(a))\wf_g(b)=_{(6|G|+21)\varepsilon} \wf_{gh}(\fu_{g,h}^*\alpha_g(a)b)
\]
for all $a,b\in \cF$ and $g, h\in G$.
Next, we compute for all $a,b\in \cF$ and $g\in G$ that

\begin{align*}
    &\wf_g(a)\wf_g(b)^*\\
    &\; \stackrel{\eqref{eqn:twiddle}\quad}{=_{C\varepsilon}} \sum_{k,k'\in G}\beta_k(f_{n_1})\beta_g( \bs_{g^{-1}k}^*)\fv_{g,g^{-1}k} \bs_k \bw_k \phi_1(\fu_{g,g^{-1}k}^*ab^*\fu_{g,g^{-1}k'}) \bw_{k'}^*\\
    &\quad\quad\quad\quad\quad \bs_{k'}^* \fv_{g,g^{-1}k'}^*\beta_g( \bs_{g^{-1}k'})\beta_{k'}(f_{n_1})\\
    &\stackrel{\ref{lem:iterativelemma:eq-a},\ref{lem:iterativelemma:eq-b}\quad}{=_{3\varepsilon}} \sum_{k\in G} \beta_g( \bs_{g^{-1}k}^*) \fv_{g,g^{-1}k}\bs_k \bw_k \phi_1(\fu_{g,g^{-1}k}^*a)\beta_k(f_{n_1})\phi_1(b^*\fu_{g,g^{-1}k}) \bw_{k}^* \bs_k^* \fv_{g,g^{-1}k}^*\\
    &\quad\quad\quad\quad\quad\beta_g( \bs_{g^{-1}k})\\
    &\ \ \; \stackrel{\ref{lem:iterativelemma:eq-a}\ }{=_{\varepsilon}} \sum_{k\in G} \beta_g( \bs_{g^{-1}k}^*)\fv_{g,g^{-1}k}\beta_k(f_{n_1}) \bs_k \bw_k \phi_1(\fu_{g,g^{-1}k}^*ab^*\fu_{g,g^{-1}k}) \bw_{k}^* \bs_k^* \fv_{g,g^{-1}k}^* \beta_g(\bs_{g^{-1}k})\\
    &\ \ \stackrel{\eqref{lem:iterativelemma:eq-choice-bw}\ }{=_{\varepsilon}} \sum_{k\in G} \beta_g( \bs_{g^{-1}k}^*)\fv_{g,g^{-1}k}\beta_{k}(f_{n_1}\phi_1\alpha_k^{-1}(\fu_{g,g^{-1}k}^*ab^*\fu_{g,g^{-1}k}))\fv_{g,g^{-1}k}^*\beta_g( \bs_{g^{-1}k})\\
    &\quad \stackrel{\eqref{item:multiplicativity}}{=} \sum_{k\in G} \beta_g( \bs_{g^{-1}k}^*\beta_{g^{-1}k}(f_{n_1}))\beta_{g}\beta_{g^{-1}k}\phi_1\alpha_{g^{-1}k}^{-1}\alpha_g^{-1}(ab^*)\beta_g( \bs_{g^{-1}k})\\
    &\quad \stackrel{\ref{lem:iterativelemma:eq-a}\ }{=_{\varepsilon}} \sum_{k\in G}\beta_g\left(\beta_k(f_{n_1}) \bs_k^*\beta_k\phi_1\alpha_k^{-1}(\alpha_g^{-1}(ab^*)) \bs_k\right)\\
    &\ \ \, \stackrel{\eqref{lem:iterativelemma:eq-choice-bw}\ }{=_{\varepsilon}} \sum_{k\in G}\beta_g\left(\beta_k(f_{n_1}) \bw_k \phi_1\alpha_{g}^{-1}(ab^*) \bw_k^* \right)\\
    &\ \ \, \stackrel{\eqref{eqn:twiddle}\quad }{=_{6\varepsilon}}\beta_g\wf_1\alpha_g^{-1}(ab^*).
\end{align*}
Here we have used \eqref{eqn:twiddle} in the last step with $1_G$ in place of $g$. Before we check condition \eqref{closenessapprox}, if we let $x_k=\beta_g( \bs_{g^{-1}k}^*)\fv_{g,g^{-1}k} \bs_k \bw_k \phi_1(\fu_{g,g^{-1}k}^*\alpha_g(a)bc)\bw_{k}^*-\phi_g(\alpha_g(a)bc)$ for $a,b,c\in \cF$ and $k,g\in G$, then we have that
\begin{align}
    \Big\lVert \sum_{k\in G}\beta_k(f_{n_1})ex_k \Big\rVert^2 &= \Big\lVert\sum_{k,k'\in G}\beta_k(f_{n_1})ex_kx_{k'}^*e\beta_{k'}(f_{n_1})\Big\rVert \label{eqn:iterativelem_orth}\\
    &\stackrel{\ref{lem:iterativelemma:eq-a}}{\leq} 16\varepsilon+\Big\lVert \sum_{k,k'\in G} ex_kx_{k'}^*e\beta_k(f_{n_1})\beta_{k'}(f_{n_1})\Big\rVert\notag\\
    &\stackrel{\ref{lem:iterativelemma:eq-b}}{\leq} 20\varepsilon+\Big\lVert\sum_{k\in G}ex_kx_k^*e\beta_k(f_{n_1})^2\Big\rVert\notag\\
    &\stackrel{\ref{lem:iterativelemma:eq-b}}{\leq} 36\varepsilon+\Big\lVert \sum_{k\in G}\beta_k(f_{n_1})ex_kx_k^*e\beta_k(f_{n_1})\Big\rVert\notag\\
    &\stackrel{\ref{lem:iterativelemma:eq-c}}{\leq} 36\varepsilon+\max_{k\in G}\lVert x_k\rVert^2 \Big\lVert \sum_{k\in G}\beta_k(f_{n_1})e\Big\rVert\notag\\
    &\leq 38\varepsilon +\max_{k\in G}\lVert x_k\rVert^2.\notag
\end{align}
\noindent Thus we also have that for $a,b,c\in \cF$ and $g\in G$
{\small
\begin{align*}
    &\left\lVert\wf_g(\alpha_g(a)bc)-\phi_g(\alpha_g(a)bc)\right\rVert\\
    &\stackrel{\eqref{eqn:twiddle}}{\leq} \Big\lVert \sum_{k\in G} \beta_k(f_{n_1})\beta_g( \bs_{g^{-1}k}^*) \fv_{g,g^{-1}k} \bs_k \bw_k \phi_1(\fu_{g,g^{-1}k}^*\alpha_g(a)bc) \bw_{k}^*-\phi_g(\alpha_g(a)bc)\Big\rVert+6\varepsilon\\
    &\, \stackrel{\ref{lem:iterativelemma:eq-c}}{\leq}\Big\lVert \sum_{k\in G} \beta_k(f_{n_1})\left(\beta_g( \bs_{g^{-1}k}^*)\fv_{g,g^{-1}k} \bs_k \bw_k \phi_1(\fu_{g,g^{-1}k}^*\alpha_g(a)bc) \bw_{k}^*-\phi_g(\alpha_g(a)bc)\right)\Big\rVert\\
    &\quad\quad+ 7\varepsilon\\
    &\stackrel{\eqref{lem:iterativelemma:eq-1}}{\leq} \Big\lVert \sum_{k\in G}\beta_k(f_{n_1})e\left(\beta_g( \bs_{g^{-1}k}^*)\fv_{g,g^{-1}k} \bs_k \bw_k \phi_1(\fu_{g,g^{-1}k}^*\alpha_g(a)bc) \bw_{k}^*-\phi_g(\alpha_g(a)bc)\right)\Big\rVert\\
    &\quad\quad +8\varepsilon\\
    &\stackrel{\eqref{eqn:iterativelem_orth}}{\leq} \max_{k\in G} \lVert \beta_g( \bs_{g^{-1}k}^*)\fv_{g,g^{-1}k} \bs_k \bw_k \phi_1(\fu_{g,g^{-1}k}^*\alpha_g(a)bc) \bw_{k}^*-\phi_g(\alpha_g(a)bc)\rVert+8\varepsilon+\sqrt{38\varepsilon}\\
    &\  \leq \max_{k\in G} \lVert\beta_g( \bs_{g^{-1}k}^*)\fv_{g,g^{-1}k} \bs_k \bw_k \phi_1(\fu_{g,g^{-1}k}^*\alpha_g(a)bc) \bw_{k}^*-\beta_g(\phi_1(a))\phi_g(b)\phi_1(c)\rVert\\
    &\quad\quad +\lVert\phi_g(\alpha_g(a)b)-\beta_g(\phi_1(a))b\rVert+8\varepsilon+\sqrt{38\varepsilon}\\
    &\stackrel{\eqref{lem:iterativelemma:eq-choice-bw}}{\leq} \max_{k\in G}\lVert \beta_g( \bs_{g^{-1}k}^*)\fv_{g,g^{-1}k}\beta_k\phi_1\alpha_k^{-1}(\fu_{g,g^{-1}k}^*\alpha_g(a)bc) \bs_k -\beta_g(\phi_1(a))\phi_g(b)\phi_1(c)\rVert  \\
    &\quad\quad +\lVert\phi_g(\alpha_g(a)b)-\beta_g(\phi_1(a))b\rVert+9\varepsilon+\sqrt{38\varepsilon}   \\
    &\ = \max_{k\in G} \lVert\beta_k\phi_1\alpha_k^{-1}(\fu_{g,g^{-1}k}^*\alpha_g(a)bc)-\fv_{g,g^{-1}k}^*\beta_g( \bs_{g^{-1}k}\phi_1(a))\phi_g(b)\phi_1(c) \bs_k^*\rVert\\
    &\quad\quad +\lVert\phi_g(\alpha_g(a)b)-\beta_g(\phi_1(a))b\rVert+9\varepsilon+\sqrt{38\varepsilon}  \\
    &\stackrel{\eqref{lem:iterativelemma:eq-1}}{\leq} \max_{k\in G} \lVert\beta_k\phi_1\alpha_k^{-1}(\fu_{g,g^{-1}k}^*\alpha_g(a)bc)-\fv_{g,g^{-1}k}^*\beta_g(\phi_{g^{-1}k}(a))\phi_g(b)\phi_k(c^*)^*\rVert\\
    &\quad\quad +\lVert\phi_g(\alpha_g(a)b)-\beta_g(\phi_1(a))b\rVert+11\varepsilon+\sqrt{38\varepsilon}  \\
    &\ \leq 11\varepsilon+\sqrt{38\varepsilon}+ \max_{k\in G} \Big( \|\beta_k\phi_1\alpha_k^{-1}(\fu_{g,g^{-1}k}^*\alpha_g(a)bc)-\phi_k(\fu_{g,g^{-1}k}^*\alpha_g(a)b)\phi_k(c^*)^*\rVert\\
    &\quad\quad +\lVert\phi_k(\fu_{g,g^{-1}k}^*\alpha_g(a)b)-\fv_{g,g^{-1}k}^*\beta_g(\phi_{g^{-1}k}(a))\phi_g(b)\rVert \Big)\\
    &\quad\quad + \lVert\phi_g(\alpha_g(a)b)-\beta_g(\phi_1(a))b\rVert.
\end{align*}
}
As $\varepsilon$ is arbitrary the result follows.
\end{proof}

We now prove Theorem \ref{thm: existence}.

\begin{proof}[Proof of Theorem \ref{thm: existence}]
Let $F_n$ be an increasing sequence of finite subsets of $A_{1}$ such that $\overline{\cup_{n\geq 1}F_n}=A_1$. Let $e_n$ be an approximate unit for $A$ such that
\[
\lVert\alpha_g(e_n)a-a\rVert<2^{-n},\quad \lVert a\alpha_g(e_n)-a\rVert<2^{-n},\quad a\in F_n,\ g\in G.
\]
Let
\[
G_n=\bigcup_{g,h\in G}\bigcup_{k\leq n} \fu_{g,h}^*F_n e_k.
\]
We shall apply Lemma \ref{lem:iterativelemma} inductively.
In the first step $n=1$ of the induction, we use Lemma \ref{lem:iterativelemma} with the input data $\varphi$, $\bv_g=1$, $\varepsilon=1/2$, $\cF=G_1$ to get unitaries $\bv_g^{(1)}$ for $g\in G$ such that letting $\phi_g^{(1)}(a)=\bv_g^{(1)}\varphi(a){\bv^{(1)}_1}^*$ for $a\in A$ and $g\in G$, we have that 
\begin{enumerate}
    \item $\lVert \fv_{g,h}^*\beta_g(\phi_h^{(1)}(a))\phi_g^{(1)}(b)-\phi_{gh}^{(1)}(\fu_{g,h}^*\alpha_g(a)b)\rVert\leq 1/2$,
    \item $\lVert\phi_g^{(1)}(a) \phi_g^{(1)}(b)^*-\beta_g\phi_1^{(1)}\alpha_g^{-1}(ab^*)\rVert\leq 1/2$,
\end{enumerate}
for $a,b\in G_1$ and $g,h\in G$. 
As the induction step, suppose $n\geq 1$ is given and for every $1\leq j \leq n$ we have a unitary family $\bv_{g}^{(j)}$ for $g\in G$ and linear maps $\phi_g^{(j)}:A\rightarrow B$ for $g\in G$, satisfying the inductive formula $\phi_g^{(j)}(a)=\bv_g^{(j)}\phi_1^{(j-1)}(a){\bv_1^{(j)}}^*$,
such that for all $g,h \in G$ and $1\leq j \leq n$ we have
\begin{enumerate}[leftmargin=*,label=\textup{(\roman*)}]
    \item \label{item:isometricstrict} $\phi_g^{(j)}(c)^*\phi_g^{(j)}(d)=\phi_1^{(j)}(c^*d)$ for all $c,d\in A$,
    \item \label{item:equivariantapprox} $\lVert \fv_{g,h}^*\beta_g(\phi_h^{(j)}(a))\phi_g^{(j)}(b)-\phi_{gh}^{(j)}(\fu_{g,h}^*\alpha_g(a)b)\rVert\leq  2^{-j}$ for all $a,b \in G_j$,
    \item \label{item:leftinner} $\lVert\phi_g^{(j)}(a) \phi_g^{(j)}(b)^*-\beta_g\phi_1^{(j)}\alpha_g^{-1}(ab^*)\rVert\leq 2^{-j}$ for all $a,b\in G_j$.
\end{enumerate}
Then we apply Lemma \ref{lem:iterativelemma} with the data $\phi^{(n)}_1, \bv_g^{(n)}, \cF=G_{n+1}, \varepsilon=2^{-(n+1)}$ to get a unitary $\bv_g^{(n+1)}$ such that setting $\phi^{(n+1)}_g(a)=\bv_g^{(n+1)}\phi_1^{(n)}(a){\bv_1^{(n+1)}}^*$ for $g\in G$ and $a\in A$ we have that $\phi_g^{(n+1)}$ is a family of linear maps satisfying \ref{item:isometricstrict}--\ref{item:leftinner}, as well as
\begin{equation} \label{item:closeness}
\begin{multlined}
	\begin{array}{cl}
	\multicolumn{2}{l}{ \lVert \phi_g^{(n+1)}(\alpha_g(e_{n})ae_{n})-\phi_g^{(n)}(\alpha_g(e_{n})ae_{n})\rVert } \\
    \leq & 2^{-(n+1)}+\lVert\beta_g\phi_1^{(n)}(e_n)\phi_g(a)-\phi_g^{(n)}(\alpha_g(e_n)a)\rVert\\
    +&\displaystyle \max_{k\in G} \Big( \lVert\fv_{g,g^{-1}k}^*\beta_g(\phi_{g^{-1}k}^{(n)}(e_n))\phi_g^{(n)}(a)-\phi_k^{(n)}(\fu_{g,g^{-1}k}^*\alpha_g(e_n)a)\rVert\\
    +&\lVert\beta_k\phi_1^{(n)}\alpha_k^{-1}(\fu_{g,g^{-1}k}^*\alpha_g(e_n)ae_n)-\phi_k^{(n)}(\fu_{g,g^{-1}k}^*\alpha_g(e_n)a)\phi_k^{(n)}(e_n)^*\rVert \Big)
    \end{array}
\end{multlined}
\end{equation}
for all $a\in G_{n+1}$, $g,h\in G$.
In particular by the inductive construction one can bound the right hand side of \eqref{item:closeness} to get that
\begin{equation}
    \lVert \phi_g^{(n+1)}(\alpha_g(e_{n})ae_{n})-\phi_g^{(n)}(\alpha_g(e_{n})ae_{n})\rVert\leq \frac{13}{2^{n+1}}
\end{equation}
for all $n\geq 1$ and $a\in F_n$.
Thus we have that $\phi^{(n)}_g(x)$ satisfies the Cauchy criterion for every $g\in G$ and $x\in \bigcup_{n\geq 1}\bigcup_{g\in G} \alpha_g(e_n) F_n e_n $.
Since the latter is a dense subset and each $\phi^{(n)}_g$ is a contractive linear map, we have that $\phi^{(n)}_g(a)$ must satisfy the Cauchy criterion for all $g\in G$ and $a\in A$.
Set
\[
\phi_g(a)=\lim_{n\rightarrow \infty}\phi_g^{(n)}(a)
\]
for $g\in G$ and $a\in A$.
Then conditions \ref{item:isometricstrict} and \ref{item:equivariantapprox} imply that $\phi=(\phi_g)_{g\in G}$ defines a $G$-coherent morphism.
Furthermore, we can read off the construction that $\phi_1$ is proper approximately unitarily equivalent to $\varphi$ through the sequence of unitaries $n\mapsto \bv_1^{(n)}\ldots \bv_1^{(2)}\bv_1^{(1)}\in\cU(\eins+B)$.
This finishes the proof.
\end{proof}
In the upcoming result we denote the stabilisation of an anomalous action $(A,\alpha,\fu)$ by $(A^s,\alpha^s,\fu^s)$, i.e., $A^s=A\otimes \bK$, $\alpha_g^s=\alpha_g\otimes \id_{\bK}$ and $\fu_{g,h}^s=\fu_{g,h}\otimes\eins$ for all $g,h\in G$.
\begin{corollary} \label{cor:unital-existence}
Let $(A,\alpha,\fu)$ and $(B,\beta,\fv)$ be anomalous actions of a finite group $G$ on separable, unital C$^*$-algebras such that $(B,\beta,\fv)$ has the Rokhlin property.
Let $\varphi:A\rightarrow B$ be a unital $*$-homomorphism such that $\varphi\alpha_g\approx_{\mathrm{u}} \beta_g \varphi$ for all $g\in G$ and $o(\alpha,\fu)=o(\beta,\fv)$.
Then there exists a unital $G$-coherent morphism $\phi:(A,\alpha,\fu)\rightarrow (B,\beta,\fv)$ such that $\phi_1\approx_{\mathrm{u}}\varphi$. 
\end{corollary}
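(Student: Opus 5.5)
The plan is to reduce to the stable case of Theorem \ref{thm: existence} and then cut back down by a corner. Pass to the stabilisations $(A^s,\alpha^s,\fu^s)$ and $(B^s,\beta^s,\fv^s)$. By Lemma \ref{lem:Rokhlintensor}, with the second factor carrying the trivial $G$-action on $\bK$, the stabilisation $(B^s,\beta^s,\fv^s)$ still has the Rokhlin property. The anomalies are unchanged, since $\fu^s=\fu\otimes\eins$ and $\fv^s=\fv\otimes\eins$. Set $\varphi^s=\varphi\otimes\id_{\bK}$. Given $\varphi\alpha_g\approx_{\mathrm{u}}\beta_g\varphi$ via unitaries $u_n\in\cU(B)$, the unitaries $u_n\otimes\eins\in\cU\cM(B^s)$ implement $\varphi^s\alpha^s_g\approx\beta^s_g\varphi^s$ approximately. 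To obtain \emph{proper} approximate unitary equivalence, I would replace these multiplier unitaries by unitaries in $\cU(\eins+B^s)$ on finite sets. One route is \cite[Lemma 4.3]{GASZ25}; a more elementary route is path-connectedness of $\cU\cM(B^s)$ combined with approximation by $\cU(\eins+B^s)$ on compact sets. Theorem \ref{thm: existence} then yields a $G$-coherent morphism $\Phi:(A^s,\alpha^s,\fu^s)\to(B^s,\beta^s,\fv^s)$ with $\Phi_1\approx_{\mathrm{pu}}\varphi^s$.

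Next comes the corner reduction. Let $p=\eins_A\otimes e_{11}\in A^s$ and $q=\eins_B\otimes e_{11}$. Since $\Phi_1\approx_{\mathrm{pu}}\varphi^s$, there are unitaries $w_n\in\cU(\eins+B^s)$ with $w_n\Phi_1(p)w_n^*\to q$. Because these are projections, for large $n$ a single unitary $w\in\cU(\eins+B^s)$, close to $\eins$ times $w_n$, satisfies $w\Phi_1(p)w^*=q$ exactly, with $\Ad(w)\Phi_1$ still $\approx_{\mathrm{pu}}\varphi^s$ on prescribed finite sets.

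Now compose $\Phi$ with the $G$-coherent morphism induced by the cocycle morphism $\Ad(w)$, namely $b\mapsto\beta^s_g(w)bw^*$; this gives $\Phi'$ with $\Phi'_1(p)=q$. Define $\phi_g(a)=q\,\Phi'_g(a\otimes e_{11})\,q$. I need to check that $\Phi'_g(a\otimes e_{11})=\beta^s_g(q)\Phi'_g(a\otimes e_{11})q$, using condition \ref{item:2weakcocyc} with $h=1$ or $g=1$ applied to $p$. Also $\beta^s_g(q)=q$ and $\alpha^s_g(p)=p$, since the actions are of the form $\cdot\otimes\id$. Then $\phi$ is a $G$-coherent morphism between the anomalous actions on the corners $qB^sq\cong B$ and $pA^sp\cong A$. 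These corners carry exactly $(B,\beta,\fv)$ and $(A,\alpha,\fu)$, because $\fu_{g,h}\otimes\eins$ commutes with $p$. Moreover $\phi_1$ is unital, as $\phi_1(\eins)=q$.

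The main obstacle is arranging $\phi_1\approx_{\mathrm{u}}\varphi$ rather than merely having both maps unital. The plan is to use $\Phi'_1\approx_{\mathrm{pu}}\varphi^s$ via unitaries $w_n'$ with $w_n'\Phi'_1(p)w_n'^*\to q$, where $\Phi'_1(p)=q$. Then $qw_n'q$ is asymptotically a unitary in $qB^sq$. Polar decomposition perturbs it to unitaries $z_n\in\cU(B)$ with $z_n\phi_1(a)z_n^*\to\varphi(a)$, in the manner of Remark \ref{rmk:unitalapproxmvn}.

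A simpler alternative is to bypass the corner step entirely by invoking Corollary \ref{cor: uniqueness} and Proposition \ref{prop:uniqueness} in the unital case. If instead $B$ is directly stable or has almost stable rank one, the theorem applies as is. The stabilisation route, however, avoids any hypothesis on $B$, so I would follow it.
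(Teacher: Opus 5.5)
Your proposal is correct and follows the paper's own proof. In both, you stabilise, apply Theorem~\ref{thm: existence} to $\varphi\otimes\id_{\bK}$, conjugate $\Phi_1(\eins_A\otimes e_{11})$ exactly onto $\eins_B\otimes e_{11}$ by a unitary in $\cU(\eins+B^s)$, restrict to the corner, and perturb the compressed unitaries into $\cU(B)$; you also explicitly verify the Rokhlin property, anomaly and $\approx_{\mathrm{pu}}$ hypotheses for the stabilised data, which the paper leaves implicit. Your closing ``simpler alternative'' via Corollary~\ref{cor: uniqueness} and Proposition~\ref{prop:uniqueness} would not work, since those are uniqueness statements and produce no morphism, but you rightly do not rely on it.
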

\begin{proof}
By Theorem \ref{thm: existence} applied to the $*$-homomorphism $\varphi\otimes \id_{\bK}:A^s\rightarrow B^s$ and the stabilised actions $(A^s,\alpha^s,\fu^s)$ and $(B^s,\beta^s,\fv^s)$ there is a $G$-coherent morphism $\Phi:(A^s,\alpha^s,\fu^s)\rightarrow (B^s,\beta^s,\fv^s)$ such that $\Phi_1\approx_{\mathrm{pu}}\varphi\otimes \id_{\bK}$. Now, let $e_{11}$ be the rank one projection in $\bK$. Then there is a unitary $u\in \cU(\eins+B^s)$ such that $\Ad(u)\Phi_1(\eins_A\otimes e_{11})=\eins_B\otimes e_{11}$. In particular, the cocycle morphism $\Ad(u)\Phi$ sends $A\otimes e_{11}$ to $B\otimes e_{11}$ and we have a cocycle morphism $\phi:=\Ad(u)\Phi|_{A\otimes e_{11}}:(A,\alpha,\fu)\rightarrow (B,\beta,\fv)$. Let $(v_n)_{n\in \bN}\in \cU(\eins +B^s)$ be such that
\begin{equation}\label{eqn:corexistence}
\lim_{n\to\infty}\lVert \Ad(u)\Phi_1(a)-v_n(\varphi\otimes \id_{\bK}(a))v_n^*\rVert=0.
\end{equation}
Then the sequence $w_n=(\eins_B\otimes e_{11})v_n(\eins_B\otimes e_{11})$ in $B\otimes e_{11}$ satisfies
\[
\lim_{n\rightarrow \infty}\lVert w_n^*w_n-(\eins_B\otimes e_{11})\lVert=0=\lim_{n\rightarrow \infty}\lVert w_nw_n^*-(\eins_B\otimes e_{11})\rVert
\]
as a consequence of \eqref{eqn:corexistence} evaluated at $a=\eins_A\otimes e_{11}$. Therefore, there is a sequence of unitaries $s_n\in \cU(B)$ such that 
\[
\lim_{n\rightarrow \infty}\lVert (s_n\otimes e_{11})-w_n\rVert=0.
\]
The sequence $s_n$ witnesses the equivalence $\phi_1\approx_{\mathrm{u}}\varphi$.
\end{proof}

\section{Classification}\label{sec:classification}

We are now ready to prove classification results for anomalous actions. The following result is the analogue of \cite[Theorem 3.5]{IZU04I} in the generality of anomalous actions.

\begin{theorem}\label{thm:classicalint}
Let $G$ be a finite group and $A$ be a separable \cstar-algebra that is either unital, stable or has almost stable rank one.
Let $(\alpha,\fu)$ and $(\beta,\fv)$ be two anomalous actions of $G$ on $A$ with the Rokhlin property and $o(\alpha,\fu)=o(\beta,\fv)$.
One has that $\alpha_g\approx_{\mathrm{pu}}\beta_g$ for all $g\in G$ if and only if there is a cocycle conjugacy $\Phi=(\Phi_g)_{g\in G} :(A,\alpha,\fu)\rightarrow (A,\beta,\fv)$ with $\Phi_1\approx_{\mathrm{pu}}\id_A$.
\end{theorem}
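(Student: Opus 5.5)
The plan is a two-sided Elliott intertwining argument. We produce $G$-coherent morphisms in both directions from the existence result, show they are mutually inverse up to $\approx_{\mathrm{pu}}^G$ by the uniqueness result, and then conclude with Theorem \ref{thm: Elliott intertwining}.

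For the ``only if'' direction, first assume $A$ is stable or has almost stable rank one. Apply Theorem \ref{thm: existence} with $\varphi=\id_A$, source $(A,\alpha,\fu)$ and target $(A,\beta,\fv)$. The hypothesis $\id_A\alpha_g\approx_{\mathrm{pu}}\beta_g\id_A$ is exactly our assumption, the anomalies agree, and the target has the Rokhlin property. This yields a $G$-coherent morphism $\phi:(A,\alpha,\fu)\to(A,\beta,\fv)$ with $\phi_1\approx_{\mathrm{pu}}\id_A$. Symmetrically, since $\approx_{\mathrm{pu}}$ is symmetric, we get $\psi:(A,\beta,\fv)\to(A,\alpha,\fu)$ with $\psi_1\approx_{\mathrm{pu}}\id_A$.

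The compositions $\psi\circ\phi$ and $\phi\circ\psi$ are $G$-coherent endomorphisms of $(A,\alpha,\fu)$ and $(A,\beta,\fv)$ respectively, and their first components satisfy $(\psi\phi)_1=\psi_1\phi_1\approx_{\mathrm{pu}}\id_A$, since $\approx_{\mathrm{pu}}$ is compatible with composition. Corollary \ref{cor: uniqueness} (cases (i) or (iii)), applied with the Rokhlin target and with $\id_A$ as the comparison morphism (the identity tuple), upgrades this to $\psi\circ\phi\approx_{\mathrm{pu}}^G\id_A$, and likewise $\phi\circ\psi\approx_{\mathrm{pu}}^G\id_A$. Theorem \ref{thm: Elliott intertwining} then gives a cocycle conjugacy $\Phi$ with $\Phi\approx_{\mathrm{pu}}^G\phi$. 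In particular $\Phi_1\approx_{\mathrm{pu}}\phi_1\approx_{\mathrm{pu}}\id_A$, and by Lemma \ref{lem: cocycleisoscoincide} $\Phi$ is a genuine cocycle conjugacy.

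In the unital case, Theorem \ref{thm: existence} does not apply directly, so we use Corollary \ref{cor:unital-existence} instead to obtain unital $\phi,\psi$ with $\phi_1\approx_{\mathrm{u}}\id_A$. This holds because for unital $A$ proper approximate unitary equivalence and approximate unitary equivalence coincide. Corollary \ref{cor: uniqueness}(ii) then replaces (i) or (iii), and the rest of the argument is the same.

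For the ``if'' direction, suppose $\Phi$ is a cocycle conjugacy with $\Phi_1\approx_{\mathrm{pu}}\id_A$. Lemma \ref{lem: cocycleisoscoincide} gives unitaries $\bv_g\in\cU\cM(A)$ with $\Phi_1\alpha_g=\Ad(\bv_g)\beta_g\Phi_1$. Hence
\[
\alpha_g\approx_{\mathrm{pu}}\Phi_1\alpha_g=\Ad(\bv_g)\beta_g\Phi_1\approx_{\mathrm{pu}}\Ad(\bv_g)\beta_g .
\]
The main obstacle is then to remove $\Ad(\bv_g)$, since $\bv_g$ is only a multiplier unitary. In the unital case $\bv_g\in\cU(A)$ and there is nothing to do. Otherwise we must check that $\Ad(\bv_g)\approx_{\mathrm{pu}}\id$ in the stable and almost stable rank one settings. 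I would try the approximation of multiplier unitaries on compact sets by unitaries in $\cU(\eins+A)$: this is \cite[Lemma 4.3]{GASZ25} in the stable case, and the argument of Lemma \ref{lem:sequencesettingLemma} otherwise, applied to $v=e_n\bv_g e_n$ for a quasicentral approximate unit $(e_n)$. If this fails, the ``if'' statement should presumably be read with $\approx_{\mathrm{pu}}$ understood modulo inner automorphisms by multipliers.
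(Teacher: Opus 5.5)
Your proposal is correct and follows the paper's proof essentially verbatim. In both, the existence result (Theorem \ref{thm: existence}, or Corollary \ref{cor:unital-existence} in the unital case) gives $G$-coherent morphisms in both directions. Corollary \ref{cor: uniqueness} upgrades the compositions to $\approx_{\mathrm{pu}}^G\id_A$, and Theorem \ref{thm: Elliott intertwining} finishes. The ``if'' direction reduces, as in the paper, to $\Ad(w)$ being properly approximately inner for $w\in\cU\cM(A)$, which the paper also obtains from Lemma \ref{lem:sequencesettingLemma} applied to $w$ times an approximate unit.

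Your closing hedge is unnecessary, because that argument does go through. With $v=\bv_g e_n$ one has $\eins-v^*v=\eins-e_n^2$, so the lemma yields $w_n\in\cU(\eins+A)$ with $(w_n-\bv_g)a\to 0$ for all $a\in A$. Hence $\Ad(w_n)\to\Ad(\bv_g)$ pointwise.
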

\begin{proof}
The ``if'' part is conceptually clear, but contains one subtle point:\ If $w\in\cU\cM(A)$ is a multiplier unitary, then the automorphism $\Ad(w)$ is properly approximately inner.
This is indeed trivial when $A$ is unital.
When $A$ is stable or has almost stable rank one, this can be seen as a consequence of Lemma~\ref{lem:sequencesettingLemma}, upon multiplying $w$ with a sequential approximate unit of $A$ and viewing this as a sequence of contractions on which to apply the statement.

We shall argue the ``only if'' part.
Assume first that $A$ is either stable or has almost stable rank one.
By applying Theorem \ref{thm: existence}, with $\varphi=\id_A$, we can choose $G$-coherent morphisms $\phi:(A,\alpha,\fu)\rightarrow (A,\beta,\fv)$ and $\psi:(A,\beta,\fv)\rightarrow (A,\alpha,\fu)$ with $\phi_1\approx_{\mathrm{pu}}\id_A$ and $\psi_1\approx_{\mathrm{pu}}\id_A$.
The composition of these $G$-coherent morphisms thus satisfies $(\phi\circ\psi)_1\approx_{\mathrm{pu}} \id_A\approx_{\mathrm{pu}}(\psi\circ\phi)_1$.
Hence, as a consequence of Corollary \ref{cor: uniqueness} one has that $\phi\circ\psi\approx_{\mathrm{pu}}^G \id_A\approx_{\mathrm{pu}}^G \psi\circ\phi$.
It now follows from Theorem \ref{thm: Elliott intertwining} that there is a coycle conjugacy $\Phi:(A,\alpha,\fu)\rightarrow (A,\beta,\fv)$ with $\Phi_1\approx_{\mathrm{pu}}\phi_1\approx_{\mathrm{pu}} \id_A$.

The proof of the unital case is completely analogous, except that we use Corollary \ref{cor:unital-existence} in place of Theorem \ref{thm: existence}.
\end{proof}

With the existence and uniqueness results at hand we can now follow a similar strategy to \cite[Section 3.2]{GASA16}.
Any two anomalous actions $(A,\alpha,\fu)$ and $(B,\beta,\fv)$ of a finite group $G$ equip $\underline{K}T_u(A)$ and $\underline{K}T_u(B)$ with the structure of a $G$-module as $\underline{K}T_u$ is a functor that is invariant under inner automorphisms.
We will say that these $G$-modules are conjugate if there exists an isomorphism $\Phi:\underline{K}T_u(A)\rightarrow \underline{K}T_u(B)$ such that $\Phi\underline{K}T_u(\alpha_g)=\underline{K}T_u(\beta_g)\Phi$ for all $g\in G$.

\begin{theorem}\label{thm: classificationKTu}
Let $A$ and $B$ be two unital, simple, separable, nuclear, $\Z$-stable C$^*$-algebras in the UCT class and $G$ be a finite group.
Let $(A,\alpha,\fu)$ and $(B,\beta,\fv)$ be anomalous $G$-actions with the Rokhlin property such that $o(\alpha,\fu)=o(\beta,\fv)$.
Then every conjugacy $\kappa: \big( \underline{K}T_u(A), \underline{K}T_u(\alpha) \big) \to \big( \underline{K}T_u(B), \underline{K}T_u(\beta) \big)$ lifts to a cocycle conjugacy $\Phi: (A,\alpha,\fu)\to (B,\beta,\fv)$.
\end{theorem}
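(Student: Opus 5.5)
The plan is to run the two-sided Elliott intertwining of Theorem \ref{thm: Elliott intertwining}, fed by the existence and uniqueness results of Section \ref{sec:exist+uniq} in their unital forms, exactly as in the proof of Theorem \ref{thm:classicalint}.

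First use Theorem \ref{thm:KTU}\ref{existenceKTU} to realise the given isomorphism $\kappa$ by a unital isomorphism $\varphi:A\to B$ with $\underline{K}T_u(\varphi)=\kappa$. The conjugacy condition $\kappa\underline{K}T_u(\alpha_g)=\underline{K}T_u(\beta_g)\kappa$ then gives $\underline{K}T_u(\varphi\alpha_g)=\underline{K}T_u(\beta_g\varphi)$ for every $g\in G$. Both sides are unital $*$-homomorphisms $A\to B$, so Theorem \ref{thm:KTU}\ref{uniquenessKTU} yields $\varphi\alpha_g\approx_{\mathrm{u}}\beta_g\varphi$ for all $g\in G$. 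Running the same argument for $\kappa^{-1}$ and $\varphi^{-1}$ gives $\varphi^{-1}\beta_g\approx_{\mathrm{u}}\alpha_g\varphi^{-1}$.

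Since the anomalies agree and both anomalous actions have the Rokhlin property, we can apply Corollary \ref{cor:unital-existence} twice. This produces unital $G$-coherent morphisms $\phi:(A,\alpha,\fu)\to(B,\beta,\fv)$ with $\phi_1\approx_{\mathrm{u}}\varphi$, and $\psi:(B,\beta,\fv)\to(A,\alpha,\fu)$ with $\psi_1\approx_{\mathrm{u}}\varphi^{-1}$. Their compositions are unital $G$-coherent morphisms, with $(\psi\circ\phi)_1=\psi_1\phi_1\approx_{\mathrm{u}}\id_A$ and $(\phi\circ\psi)_1\approx_{\mathrm{u}}\id_B$. For unital codomain, approximate and proper approximate unitary equivalence coincide. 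So Corollary \ref{cor: uniqueness}\ref{cor: uniqueness:2} applies, using the Rokhlin property of the target in each case, and upgrades these to $\psi\circ\phi\approx_{\mathrm{pu}}^G\id_A$ and $\phi\circ\psi\approx_{\mathrm{pu}}^G\id_B$.

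Theorem \ref{thm: Elliott intertwining} then produces mutually inverse cocycle conjugacies, say $\Phi$, with $\Phi\approx_{\mathrm{pu}}^G\phi$. By Lemma \ref{lem: cocycleisoscoincide} these are genuine cocycle conjugacies, and in particular $\Phi_1\approx_{\mathrm{u}}\phi_1\approx_{\mathrm{u}}\varphi$. Since $\underline{K}T_u$ is invariant under approximate unitary equivalence, it follows that $\underline{K}T_u(\Phi_1)=\underline{K}T_u(\varphi)=\kappa$, so $\Phi$ lifts $\kappa$.

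I expect the only delicate point to be bookkeeping rather than new ideas: one must check that Theorem \ref{thm:KTU} applies to the compositions $\varphi\alpha_g$ and $\beta_g\varphi$, which holds because they are unital. One must also check that $\underline{K}T_u(\Phi_1)$ is really determined by the approximate unitary equivalence class of $\Phi_1$. Both are standard once the functoriality and inner-invariance of $\underline{K}T_u$ are granted.
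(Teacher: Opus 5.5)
Your proof is correct and is essentially the paper's argument. The paper realises $\kappa$ by an isomorphism $\varphi$ and transports $(\alpha,\fu)$ to $(B,\varphi\alpha\varphi^{-1},\varphi(\fu))$. It then invokes Theorem \ref{thm:classicalint}, whose unital case is exactly the scheme you run via Corollary \ref{cor:unital-existence}, Corollary \ref{cor: uniqueness}\ref{cor: uniqueness:2} and Theorem \ref{thm: Elliott intertwining}, and composes the result with $\varphi$; you simply run that scheme directly between $A$ and $B$, and the final identity $\underline{K}T_u(\Phi_1)=\kappa$ follows from the ``only if'' direction of Theorem \ref{thm:KTU}\ref{uniquenessKTU}.
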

\begin{proof}
By Theorem \ref{thm:KTU}\ref{existenceKTU} there exists an isomorphism $\varphi:A\rightarrow B$ such that $\underline{K}T_u(\varphi)=\kappa$.
Hence by Theorem \ref{thm:KTU}\ref{uniquenessKTU} one has that $\varphi\alpha_g\varphi^{-1}\approx_{\mathrm{u}}\beta_g$ for all $g\in G$.
It follows from Theorem \ref{thm:classicalint} that there is a cocycle conjugacy $\Phi': (B,\varphi\alpha\varphi^{-1},\varphi(\fu))\to (B,\beta,\fv)$ such that $\Phi_1'$ is approximately inner.
Then the composition $\Phi=\Phi'\varphi$ defines a cocycle conjugacy from $(A,\alpha,\fu)$ to $(B,\beta,\fv)$ with $\underline{K}T_u(\Phi_1)=\kappa$.
Here we note that we view $\varphi=(\varphi)_{g\in G}$ as a genuine equivariant map from $(A,\alpha,\fu)$ to $(B,\varphi\alpha\varphi^{-1},\varphi(\fu))$.
\end{proof}

\begin{remark}
Theorem \ref{thm: classificationKTu} generalises \cite[Corollary 5.5]{GP25} by removing the assumption of UHF-stability.
The latter assumption is quite strong and thus the proof of Theorem \ref{thm: classificationKTu} requires rather different techniques to the more ad-hoc treatment in \cite[Corollary 5.5]{GP25}.
\end{remark}

\begin{corollary}\label{cor:classkirchbergtotalK}
Let $G$ be a finite group and let $A$ and $B$ be two separable C$^*$-algebras in the UCT class.
Let $(A,\alpha,\fu)$ and $(B,\beta,\fv)$ be anomalous actions of $G$ with the Rokhlin property and $o(\alpha,\fu)=o(\beta,\fv)$.
	\begin{enumerate}[leftmargin=*,label=\textup{(\roman*)}]
	\item Suppose $A$ and $B$ are stable Kirchberg algebras.
	Then $(A,\alpha,\fu)$ and $(B,\beta,\fv)$ are cocycle conjugate if and only if $\underline{K}(\alpha)$ is conjugate to $\underline{K}(\beta)$.\label{item:cor4.4stable}
	\item Suppose $A$ and $B$ are unital, simple, nuclear, $\cZ$-stable C$^*$-algebras of real rank zero. Then $(A,\alpha,\fu)$ and $(B,\beta,\fv)$ are cocycle conjugate if and only if $\underline{K}(\alpha)$ is conjugate to $\underline{K}(\beta)$ in a way that preserves the order and the class of the unit in $K_0$. \label{item:cor4.4unital}
	\end{enumerate}
\end{corollary}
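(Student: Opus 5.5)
The plan is to follow the template of Theorem \ref{thm: classificationKTu}, replacing Theorem \ref{thm:KTU} by the total K-theory classification results, Theorem \ref{thm:classificationKirchberg} in case \ref{item:cor4.4stable} and Theorem \ref{thm:TAF} in case \ref{item:cor4.4unital}.

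The ``only if'' directions are formal. A cocycle conjugacy $(\Phi_1,\bv)$ satisfies $\Phi_1\alpha_g=\Ad(\bv_g)\beta_g\Phi_1$, and $\underline{K}$ is invariant under $\Ad(w)$ for $w\in\cU\cM(B)$. Hence $\underline{K}(\Phi_1)$ intertwines $\underline{K}(\alpha)$ with $\underline{K}(\beta)$. In the unital case it is moreover a unital order isomorphism on $K_0$, since $\Phi_1$ is a $*$-isomorphism.

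For the ``if'' direction, suppose $\kappa:\underline{K}(A)\to\underline{K}(B)$ is a $G$-equivariant isomorphism of $\Lambda$-morphisms, with the additional order and unit conditions in case \ref{item:cor4.4unital}.

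\textbf{Step 1: lift $\kappa$ to an isomorphism.} We want an isomorphism $\varphi:A\to B$ with $\underline{K}(\varphi)=\kappa$.
\begin{itemize}
\item In case \ref{item:cor4.4stable}, Theorem \ref{thm:classificationKirchberg}\ref{item:classKirchbergstable} gives $*$-homomorphisms $\varphi:A\to B$ and $\psi:B\to A$ realising $\kappa$ and $\kappa^{-1}$. By the uniqueness clause, $\psi\varphi\approx_{\mathrm{pu}}\id_A$ and $\varphi\psi\approx_{\mathrm{pu}}\id_B$. The Elliott intertwining (Theorem \ref{thm: Elliott intertwining} with trivial $G$) then replaces $\varphi$ by an isomorphism inducing the same $\underline{K}$-map.
\item In case \ref{item:cor4.4unital}, I first need that unital, simple, separable, nuclear, $\cZ$-stable UCT algebras of real rank zero are exactly the TAF algebras or Kirchberg algebras, in the relevant sense. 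The finite ones are TAF, by the known tracial rank zero results for real rank zero $\cZ$-stable classifiable algebras. In the purely infinite case Theorem \ref{thm:classificationKirchberg}\ref{item:classKirchbergunital} applies, since the order condition is vacuous there. For finite algebras, Theorem \ref{thm:TAF} gives unital maps realising $\kappa$ and $\kappa^{-1}$, unique up to $\approx_{\mathrm{u}}$. The same intertwining yields a unital isomorphism $\varphi$ with $\underline{K}(\varphi)=\kappa$.
\end{itemize}

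\textbf{Step 2: compare $\varphi\alpha_g\varphi^{-1}$ with $\beta_g$.} Equivariance of $\kappa$ gives $\underline{K}(\varphi\alpha_g\varphi^{-1})=\underline{K}(\beta_g)$. The uniqueness parts of the same theorems then yield $\varphi\alpha_g\varphi^{-1}\approx_{\mathrm{pu}}\beta_g$ in the stable case and $\approx_{\mathrm{u}}$ in the unital case; these agree for unital $B$.

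\textbf{Step 3: conclude.} Transport $(\alpha,\fu)$ along $\varphi$ to the anomalous action $(B,\varphi\alpha\varphi^{-1},\varphi(\fu))$. It still has the Rokhlin property and anomaly $o(\alpha,\fu)=o(\beta,\fv)$. Since $B$ is stable or unital, Theorem \ref{thm:classicalint} provides a cocycle conjugacy $\Phi'$ to $(B,\beta,\fv)$ with $\Phi'_1$ approximately inner. Composing with the equivariant isomorphism $\varphi$ gives the desired cocycle conjugacy $\Phi=\Phi'\varphi$, exactly as in the proof of Theorem \ref{thm: classificationKTu}.

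The main obstacle I expect is Step 1 in case \ref{item:cor4.4unital}. It requires identifying the hypotheses with those of Theorem \ref{thm:TAF} or of the unital Kirchberg classification, and checking that an equivariant isomorphism of $\underline{K}$ preserving order and unit suffices. If the tracial rank zero identification turns out to be delicate, I would fall back on Theorem \ref{thm:KTU}. For real rank zero algebras, $\underline{K}T_u$ is determined by $\underline{K}$ with its order and unit: traces are determined by states on $K_0$, and the Hausdorffised unitary $K_1$ is $K_1$ up to a split extension. So one can extend $\kappa$ to a $\underline{K}T_u$-isomorphism. Its $G$-equivariance does not matter for Step 2, because uniqueness up to $\approx_{\mathrm{u}}$ is already handled by Theorem \ref{thm:TAF}.
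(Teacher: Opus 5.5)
Your proposal is correct and is essentially the paper's proof: it follows the template of Theorem \ref{thm: classificationKTu} with Theorem \ref{thm:classificationKirchberg} (resp.\ Theorem \ref{thm:TAF}) in place of Theorem \ref{thm:KTU}, and in case \ref{item:cor4.4unital} the paper likewise splits via Kirchberg's dichotomy and shows the stably finite algebras are TAF using finite decomposition rank \cite{TWW17,CETWW21} plus real rank zero \cite{WI07}; your explicit two-sided intertwining to upgrade $\varphi$ to an isomorphism spells out a step the paper leaves implicit. The fallback via Theorem \ref{thm:KTU} is unnecessary, and as written it is circular: it still relies on Theorem \ref{thm:TAF} for uniqueness, which is exactly what the fallback was meant to avoid.
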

\begin{proof}
\ref{item:cor4.4stable} follows in the same way as in the proof of Theorem \ref{thm: classificationKTu}, except that one applies Theorem \ref{thm:classificationKirchberg}\ref{item:classKirchbergstable} in place of Theorem \ref{thm:KTU}.

\ref{item:cor4.4unital}: Under the standing hypothesis $A$ and $B$ are either purely infinite or stably finite as a consequence of Kirchberg's dichotomy \cite[Theorem 4.1.10]{RO02}.
If $A$ and $B$ are purely infinite, then the result follows as in the proof of Theorem \ref{thm: classificationKTu} except that one applies Theorem \ref{thm:classificationKirchberg}\ref{item:classKirchbergunital} in place of Theorem \ref{thm:KTU}.
Suppose $A$ and $B$ are stably finite.
By combining \cite[Corollary 6.1]{TWW17} and \cite[Theorem B]{CETWW21}, we see that $A$ and $B$ have finite decomposition rank.
As they have real rank zero, it follows from \cite[Theorem 2.1]{WI07} that $A$ and $B$ are TAF.
This case now follows as in the proof of Theorem \ref{thm: classificationKTu} except that one applies Theorem \ref{thm:TAF} in place of Theorem \ref{thm:KTU}.
\end{proof}
\begin{remark}\label{rmk:observeKirchberg}
    Note that Corollary \ref{cor:classkirchbergtotalK} covers all Kirchberg algebras. Indeed, by Zhang's dichotomy \cite{ZH92} every Kirchberg algebra is either unital or stable. Moreover, every Kirchberg algebra has real rank zero.
\end{remark}
In the next sections we will show that the induced $G$-module structure of a Rokhlin anomalous action on total $K$-theory is completely determined by the $G$-module structures on ordinary $K$-theory.
This will imply that the cocycle conjugacy classes of anomalous actions on Kirchberg algebras and unital, simple, separable, nuclear TAF-algebras in the UCT class are classified by the induced $G$-module structure on the K-theory groups.
\section{K-theoretic implications of the Rokhlin property}\label{sec:cohomologyvanish}

\subsection{Cohomology vanishing}

Before we start showing the K-theoretic implications of admitting a Rokhlin anomalous action, we recall some facts about Tate cohomology.
Let $G$ be a finite group.
For a $G$-module $M$ we denote the $G$-invariants by
\[
M^G=\{x\in M: gx=x\ \forall g\in G\}
\]
and the $G$-coinvariants by
\[
M_G=M/\langle gx-x:g\in G, x\in M\rangle.
\]
Let $N:M\rightarrow M^G$ be the norm map, which is defined by $Nx=\sum_{g\in G}gx$ for $x\in M$.
Note that $N$ contains $\langle gx-x: g\in G, x\in M\rangle$ in its kernel and hence induces a well defined map $\overline{N}: M_G\rightarrow M^G$.
Recall that \emph{Tate-cohomology} groups with coefficients in $M$ is a family of groups indexed by $\bZ$ that glue together the cohomology groups and the homology groups of $G$ with coefficients in $M$ (see e.g.\ \cite[Chapter VI]{BRO82}).
\begin{definition}
The $n$-th \emph{Tate cohomology group of $G$} with coefficients in $M$ is defined by 
    \begin{equation*}
    \hat{H}^{n}(G,M):=\begin{cases}
        H^n(G,M),\quad n>0,\\
        \coker \overline{N},\quad n=0,\\
        \ker \overline{N},\quad n=-1,\\
        H_{-n-1}(G,M),\quad n<-1.
    \end{cases}
\end{equation*}
\end{definition}

We will be interested in the following notions.

\begin{definition}\label{def:cohtrivial}
A $\bZ G$-module $M$ is said to be \emph{cohomologically trivial} if $\hat{H}^n(K,M)$ vanishes for all $n\in \bZ$ and every subgroup $K\leq G$.
\end{definition}

\begin{definition}[{\cite[Definition 3.8]{IZU04II}}]
A $G$-module $M$ is called \emph{completely cohomologically trivial} if $M$ is cohomologically trivial, and for every $n\in \bN$ one of (or equivalently all of) the $G$ modules $M_n$, $nM$ or ${}_nM$ are cohomologically trivial. 
\end{definition}

Due to Izumi's prior work \cite{IZU04II} on this notion, $G$-modules of this type are structurally well-understood.

\begin{theorem}[{\cite[Theorem 3.15, Remark 3.16]{IZU04II}}]\label{theorem:inducedmodules}
Every completely cohomologically trivial $G$-module is an inductive limit of induced $G$-modules.
If the module is countable, then it is the countable inductive limit of countable induced $G$-modules.
\end{theorem}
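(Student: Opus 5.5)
We will argue along the lines of Izumi's structure theory, from the finite-level characterisation of cohomological triviality up to an inductive limit over induced modules. The plan has four steps.

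\textbf{Step 1: reduction to finitely generated pieces.}
Let $M$ be completely cohomologically trivial. We first reduce to the case where $M$ is a directed union of finitely generated $\bZ G$-submodules $M_\lambda$. Cohomological triviality does not pass to submodules, so we do not try to make each $M_\lambda$ cohomologically trivial. Instead we aim to factor the inclusions $M_\lambda\hookrightarrow M$ through induced modules.

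\textbf{Step 2: the primewise criterion.}
By the classical criterion (Nakayama--Rim, see \cite[Chapter VI]{BRO82}), $M$ is cohomologically trivial as soon as, for each prime $p$ and a Sylow $p$-subgroup $S\le G$, two consecutive Tate groups $\hat H^{n}(S,M)$ and $\hat H^{n+1}(S,M)$ vanish. The extra condition that $M_n$, $nM$ and ${}_nM$ are also cohomologically trivial for all $n$ is precisely what makes $M\otimes\bZ/p$ cohomologically trivial over $\bF_pS$. For a $p$-group, this means that $M\otimes\bZ/p$, and also ${}_pM$, is a free $\bF_pS$-module, as a union of finite-dimensional ones.

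\textbf{Step 3: lifting to induced modules.}
Using this freeness, for each finite set $F\subset M$ we construct a map $\bZ G\otimes_{\bZ} L\to M$ from an induced module, with $L$ a finitely generated abelian group, whose image contains $F$. Here $\bZ G\otimes L$ carries $G$ acting on the first factor only. For a single element $x\in M$, the natural candidate is the map $\bZ G\to M$, $g\mapsto gx$. However, to obtain a filtered system we need this map to factor approximately compatibly with the next stage. Concretely, we need the following: for any map $f\colon\bZ G\otimes L\to M$ and any finite $F'$, there exist a larger induced module $\bZ G\otimes L'$, an enlargement with image containing $F'$, and a $G$-map $\bZ G\otimes L\to\bZ G\otimes L'$ through which $f$ factors. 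This would follow if $M$ had a splitting of the norm map: cohomological triviality gives $\hat H^0(G,M)=0$, so $M^G=N(M)$, and $\hat H^{-1}(G,M)=0$. Combined with the $\bZ/n$-conditions, this lets us lift the defining relations of the finitely generated abelian group $L$ equivariantly.

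\textbf{Step 4: passing to the limit and countability.}
Taking the colimit along a cofinal countable chain (when $M$ is countable) gives $M\cong\varinjlim\bZ G\otimes L_k$, hence a countable inductive limit of countable induced modules.

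\textbf{Main obstacle.}
The hardest step is the factorisation in Step 3. Kernels of maps out of induced modules must be absorbed at a later stage, and controlling torsion requires exactly the ${}_nM$ and $M_n$ hypotheses. We would first attempt this in the $p$-local case using Sylow subgroups and freeness over $\bF_pS$. We would then assemble the primes using transfer and restriction, since induced modules are detected Sylow-locally.
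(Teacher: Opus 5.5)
The paper does not prove this statement; it quotes it from Izumi. Judged on its own terms, your proposal has a genuine gap. Everything of substance sits in Step~3, and you only assert it (``this would follow if \dots'', ``lets us lift the defining relations of $L$ equivariantly'').

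The property you formulate in Step~3 is also too weak to recover $M$ as a colimit. Enlarging $\bZ G\otimes L$ so that the image contains a given finite set is trivial: just add a summand. What is actually needed is that every element of $\ker f$ dies at a later stage. To kill finitely many $x_1,\dots,x_k\in\ker f$, you must factor the induced map $X:=(\bZ G\otimes L)/\bZ G\{x_1,\dots,x_k\}\to M$ through a finitely generated induced module. Such an $X$ is an arbitrary finitely generated $G$-module, with torsion and non-trivial action. Vanishing of $\hat H^0(G,M)$ and $\hat H^{-1}(G,M)$, or of the Tate cohomology of ${}_nM$ and $M_n$ in their own $G$-structures, does not directly control maps out of $X$. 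The $\mathbb{F}_pS$-freeness from Step~2 is never fed into this factorisation. Finally, the closing appeal to ``induced modules are detected Sylow-locally'' has to be replaced by a statement about cohomology classes.

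The missing idea is the trace criterion. A $G$-map $f\colon X\to M$ factors through an induced module if and only if $f=\sum_{g\in G}g\circ\varphi\circ g^{-1}$ for some additive $\varphi\colon X\to M$. In one direction, $f$ is then the composite of $x\mapsto\sum_g g\otimes\varphi(g^{-1}x)\in\bZ G\otimes L$ with $g\otimes m\mapsto gm$, where $L\subseteq M$ is the finitely generated subgroup generated by $\varphi(X)$. Conversely, any $G$-map through $\bZ G\otimes L$ is a trace, because $\id_{\bZ G\otimes L}$ is one.

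So the obstruction is exactly the class of $f$ in $\hat H^0(G,\Hom_{\bZ}(X,M))$. The theorem reduces to showing that this group vanishes for every finitely generated $G$-module $X$. Once that holds, the Lazard-type colimit construction goes through, including the sequential version for countable $M$ by enumerating elements and kernel elements.

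This vanishing can be checked on Sylow subgroups by restriction--corestriction; that is the correct form of your prime-by-prime assembly. Splitting off the torsion $T\subseteq X$, the torsion-free quotient contributes $\Hom(X/T,\bZ)\otimes M$, which is cohomologically trivial. The real content is therefore cohomological triviality of $\Hom_{\bZ}(T,M)$ for finite $G$-modules $T$. For $T=\bZ/n$ with trivial action this is precisely the hypothesis that ${}_nM$ is cohomologically trivial. For general finite $T$ with non-trivial action it needs an actual argument, and your proposal never supplies one.
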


Let $\alpha:G\rightarrow \Out(A)$ be a $G$-kernel and $K_i(A)$ the $K$-groups of $A$ for $i=0,1$.
Let $\widetilde{\alpha}_g\in \Aut(A)$ be a choice of lift of $\alpha_g$ for each $g\in G$.
As $K_i$ is functorial and does not distinguish between unitarily equivalent morphisms, it follows that $g\mapsto K_i(\widetilde{\alpha}_g)\in \Aut(K_i(A))$ is a well-defined homomorphism for $i=0,1$, which does not depend on the choice of the lift $\widetilde{\alpha}$.
This yields a well-defined $G$-module structure on $K_i(A)$ induced by $\alpha$.
In \cite[Theorem 3.3]{IZU04II} it is shown that whenever $\widetilde{\alpha}$ is an action of a finite group with the Rokhlin property on a unital, simple, separable C$^*$-algebra $A$, then the $G$-modules $K_i(A)$ are completely cohomologically trivial.
We generalise this result to the case of $G$-kernels while also dropping the assumptions of unitality and simplicity on $A$.
Let $M$ be a $G$-module.
In the following proof we set $M_0=M$.
For two unitaries $u,v$ in a C$^*$-algebra $\cU(\eins+A)$ we write $u\sim_h v$ to denote that they are homotopic in $\cU(\eins+A)$.

\begin{theorem}\label{thm:generalcohomologyvanishing}
Let $G$ be a finite group and $\alpha:G\rightarrow \Out(A)$ be a $G$-kernel on a separable C$^*$-algebra $A$ with the Rokhlin property.
Then the $G$-modules $K_i(A)$ are completely cohomologically trivial for $i=0,1$.
\end{theorem}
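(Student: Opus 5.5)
The plan is to follow the standard mechanism behind cohomology vanishing for Rokhlin actions: construct, at the level of K-theory, a ``transfer'' map that witnesses each $K_i(A)$ as a direct summand of an induced module, with the splitting compatible with every subgroup $K\leq G$ and with coefficients in $\bZ_n$. First reduce to the following claim. For each $i=0,1$ and $n\geq 0$ (with $n=0$ meaning integer coefficients), there exist group homomorphisms $\theta: K_i(A;\bZ_n)\rightarrow K_i(A;\bZ_n)$ with $\sum_{g\in G} g\,\theta\, g^{-1}=\id$. Such a $\theta$ makes $M=K_i(A;\bZ_n)$ a $\bZ G$-direct summand of the induced module $\bZ G\otimes M$, via the maps $x\mapsto\sum_g g\otimes\theta(g^{-1}x)$ and $g\otimes y\mapsto gy$. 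Since induced modules have vanishing Tate cohomology over every subgroup by Shapiro's lemma, $M$ is cohomologically trivial. Applied with $n=0$ and to the modules $K_i(A;\bZ_n)=K_i(A\otimes\cO_{n+1})$ with the $G$-kernel $\alpha\otimes\id$, this gives complete cohomological triviality. For the latter modules, note that the $G$-kernel $\alpha\otimes\id$ is again Rokhlin by Lemma~\ref{lem:Rokhlintensor}, and that ${}_nM$ and $M_n$ fit into the Künneth sequence \eqref{eqn:Kunneth}. Alternatively, one can simply work with $A\otimes\cO_{n+1}$ throughout and use Izumi's equivalence of the three conditions.

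To construct $\theta$, take a Rokhlin projection $p\in F_\infty(A)$ and represent it via Lemma~\ref{lem:Rp} by positive contractions $f_n$. Define $\theta$ at the level of projections and unitaries: for a projection $q\in M_k(A^\dagger)$ (or unitary $u\in\cU(\eins+M_k(A))$), compress by $f_n$ to form $f_n^{1/2}qf_n^{1/2}$ (respectively $\eins+f_n^{1/2}(u-\eins)f_n^{1/2}$). These are approximately a projection (unitary) for large $n$, because $f_n$ is approximately central and approximately idempotent on the relevant elements. Take the class of the perturbation. Then $\widetilde{\alpha}_g\theta\widetilde{\alpha}_g^{-1}$ corresponds to compressing with $\widetilde{\alpha}_g(f_n)$, using that the cocycle unitaries $\fu$ do not matter on K-theory. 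Since the elements $\widetilde{\alpha}_g(f_n)$ are approximately orthogonal and sum approximately to $\eins$ on the given element, the sum of the compressions is approximately $q$ (a direct sum decomposition) or a product of commuting unitaries equal to $u$ up to small error. This yields $\sum_g g\theta g^{-1}=\id$ on the class.

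The main obstacle is that $A$ is neither unital nor simple, and $p$ lives only in $F_\infty(A)$, so $\theta$ is not induced by a $*$-homomorphism on $A$. Well-definedness and additivity of $\theta$ on $K_i$ must therefore be checked by hand. Classes are represented by elements in $\eins+M_k(A)$; homotopies are compact sets on which the $f_n$ become uniformly approximately central; and for large $n$ the perturbations are independent of choices, so $\theta$ is defined as the eventual value of a sequence of classes. To keep this clean, first reduce $K_0$ to $K_1$ via $K_0(A)\cong K_1(SA)$ with the Rokhlin $G$-kernel $\alpha\otimes\id_{C_0(\bR)}$ from Lemma~\ref{lem:Rokhlintensor}, which is legitimate since $SA$ is separable. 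Then treat only unitaries in $\cU(\eins+M_k(A))$ and homotopies $u\sim_h v$ there, where the approximate-unit nature of $f_n$ is harmless. The identity $\sum_g g\theta g^{-1}=\id$ then follows from the factorisation $u\approx\prod_g\big(\eins+\widetilde{\alpha}_g(f_n)^{1/2}(u-\eins)\widetilde{\alpha}_g(f_n)^{1/2}\big)$, whose factors approximately commute. That factorisation is a routine estimate.
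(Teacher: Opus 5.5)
\textbf{The global transfer $\theta$ does not exist in general.} Your claim that ``for large $n$ the perturbations are independent of choices'' is false: the class of the unitary near $\eins+f_n^{1/2}(u-\eins)f_n^{1/2}$ genuinely depends on $n$. Take the translation action on $C(G)\otimes B$ with $K_1(B)\neq0$. You may alternate the exact Rokhlin projections $\delta_1\otimes\eins$ and $\delta_{g_0}\otimes\eins$ with $g_0\neq1$. Then for $u$ supported at the coordinate $1$, these classes alternate between $[u]$ and $0$. Moreover, no single $f_n$ is good on all of $K_1(A)$ when the latter is infinitely generated.

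Worse, the object you want cannot exist in general. An endomorphism $\theta\in\mathrm{End}_\bZ(M)$ with $\sum_g g\theta g^{-1}=\id$ makes $M$ a direct summand of an induced module, and this is strictly stronger than complete cohomological triviality. Take $G=\bZ/2=\{1,\sigma\}$ and $M=\{(x,y)\in\bZ\oplus\bZ[\tfrac13] : x\equiv y \bmod 2\}$ with $\sigma(x,y)=(x,-y)$. This is $\bigcup_k(2-\sigma)^{-k}\bZ G$ inside $\mathbb{Q}G\cong\mathbb{Q}^2$, a torsion-free countable inductive limit of free modules, hence completely cohomologically trivial. So by \cite[Theorem 5.3]{IZU04II} it occurs as $K_1$ of a UCT Kirchberg algebra with a Rokhlin action. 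Any additive $\theta$ on $M$ is a rational $2\times2$ matrix. The condition $\theta+\sigma\theta\sigma=\id$ forces diagonal entries $\tfrac12$. Applying $\theta$ to $(0,2\cdot3^{-j})\in M$ for all $j$ forces $\theta_{12}=0$. But then $\theta(1,1)$ has first coordinate $\tfrac12\notin\bZ$, a contradiction. So no argument routed through a globally defined $\theta$ can work.

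\textbf{What your compression argument does give is a local version, and that suffices.} Let $H\le K_1(A)$ be a finitely generated $G$-invariant subgroup. It is finitely presented, so there are only finitely many relations and homotopies to control. A single Rokhlin element that is good on this finite data yields a homomorphism $\theta_H\colon H\to K_1(A)$ with $\sum_g g\theta_H g^{-1}$ equal to the inclusion. Your own maps $x\mapsto\sum_g g\otimes\theta_H(g^{-1}x)$ and $g\otimes y\mapsto gy$ then factor the inclusion $H\hookrightarrow K_1(A)$ equivariantly through $\bZ G\otimes K_1(A)$. Hence the inclusion induces zero on $\hat H^*(K,-)$ for every $K\le G$. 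Tate cohomology of a finite group commutes with filtered colimits and $K_1(A)=\varinjlim H$, so all $\hat H^*(K,K_1(A))$ vanish. Since $\theta_H$ is additive, it restricts to ${}_mH\to{}_mK_1(A)$, which handles ${}_mK_1(A)$ the same way.

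This also replaces your $\cO_{n+1}$ step, which does not work as stated. Cohomological triviality of $K_i(A;\bZ_n)$ only yields $\hat H^q(G,K_i(A)_n)\cong\hat H^{q-1}(G,{}_nK_{1-i}(A))$ from the K\"unneth sequence, not vanishing.

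The paper's proof is the element-wise form of this local argument. After Izumi's reduction to $\hat H^0$ and $\hat H^{-1}$ of ${}_mK_1(A)$, it picks a fresh Rokhlin element for each given class. It then proves directly that the norm map is surjective and that $\ker N=\langle gz-z\rangle$.
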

\begin{proof}
Following the same simplifications as in \cite[Theorem 3.3]{IZU04II}, it suffices to show that $\hat{H}^n(G,{}_m K_i(A))=0$ for $n=0,-1$, $m\in \bN\cup \{0\}$ and $i=0,1$.
Moreover, denoting by $S$ the suspension functor, the $G$-kernel $S\alpha:G\rightarrow \Out(SA)$ inherits the Rokhlin property from $\alpha$ by Lemma \ref{lem:Rokhlintensor}.
Hence, it suffices for us to show that $\hat{H}^n(G,{}_m K_1(A))=0$ for $n=0,-1$ and $m\in \bN\cup \{0\}$, as the case of coefficients in ${}_mK_0(A)$ follows by replacing the $G$-kernel $\alpha$ by $S\alpha$ and using the natural isomorphism of functors between $K_1\circ S$ and $K_0$ given by Bott periodicity.
Similarly, we may assume that $A$ is stable by passing from the $G$-kernel $\alpha$ to the $G$-kernel $\alpha\otimes \id_{\bK}:G\rightarrow \Out(A\otimes \bK)$ and using that the functors $K_i(\cdot)$ and $K_i(\cdot \otimes \bK)$ are naturally isomorphic.

Let $(\alpha,\fu)$ be a lifting of $\alpha$ (note the abuse of notation). We start by showing that $\hat{H}^0(G,{}_m K_1(A))=0$ for $m\in \bN\cup \{0\}$.
Indeed, we show that the norm maps $N_m:{}_m K_1(A)\rightarrow ({}_m K_1(A))^G$ are surjective for all $m\in \bN\cup\{0\}$.
It will then follow that $\hat{H}^0(G,{}_m K_1(A))=\coker(\overline{N_m})=\coker(N_m)=0$. Let $x\in ({}_m K_1(A))^G$ and $u\in \cU(\eins+A)$ with $x=[u]$.
We have that $[\alpha_g(u)]=[u]$ and $[u^m]=[\eins]$ and thus there exist continuous paths $u_g, s: [0,1]\to \cU(\eins+A)$ for $g\in G$ and $0=t_1\leq t_2\leq\ldots \leq t_N=1$ satisfying
    \begin{align*}
    u_g(0)=\alpha_g(u),&\quad u_g(1)=u, \\
    s(0)=\eins,&\quad s(1)=u^m,\\
    \|u_g(t_j)-u_g(t_{j+1}&)\|<1/4,\\
    \|s(t_j)-s(t_{j+1}&)\|<1/4.
    \end{align*}
for all $1\leq j < N$ and $g\in G$.
Using the Rokhlin property through Lemma~\ref{lem:Rp}, let $1/4>\varepsilon>0$ and choose a positive contraction $f\in A_{1}^+$ satisfying
\begin{align*}
    \alpha_g(f)(u_g(t_j)-\eins)&=_{\varepsilon/5} (u_g(t_j)-\eins)\alpha_g(f),\\
    f(s(t_j)-\eins)&=_{\varepsilon/5} (s(t_j)-\eins)f,\\
    \alpha_g(f^2)(u_{g}(t_j)-\eins)&=_{\varepsilon/5}\alpha_g(f)(u_g(t_j)-\eins),\\
    \alpha_g(f)\alpha_h(f)(u-\eins)&=_{\varepsilon/5} 0,\\
    \sum_{k\in G}\alpha_k(f)(u-\eins)&=_{\varepsilon/5} u-\eins,
\end{align*}
for all $g\neq h\in G$ and $1\leq j\leq N$.
Note that
\begin{align*}
(\alpha_g(f)(u_g(t_j)-\eins)+\eins)^*(\alpha_g(f)(u_g(t_j)-\eins)+\eins)&=_{\varepsilon} \eins,\\
(\alpha_g(f)(u_g(t_j)-\eins)+\eins)(\alpha_g(f)(u_g(t_j)-\eins)+\eins)^*&=_{\varepsilon} \eins,\\
(f(s(t_j)-\eins)+\eins)(f(s(t_j)-\eins)+\eins)^*&=_{\varepsilon} \eins,\\
(f(s(t_j)-\eins)+\eins)^*(f(s(t_j)-\eins)+\eins)&=_{\varepsilon} \eins,
\end{align*}
for any $g\in G$ and $1\leq j\leq N$.
Thus there exists unitaries $v_g(t_j),w(t_j)\in \cU(\eins+A)$ satisfying
\begin{align}
    \|v_g(t_j)-(\alpha_g(f)(u_g(t_j)-\eins)+\eins)\|&<\varepsilon,\\
    \|w(t_j)-(f(s(t_j)-\eins)+\eins)\|&<\varepsilon.\label{eqn:2}
\end{align}
We now set $v_g=v_g(1)$.
Note that $[v_g]=[\alpha_g(v_1)]$ for all $g\in G$.
Indeed, by construction, we have by the triangle inequality that
\[
\lVert v_g(t_j)-v_g(t_{j-1})\rVert \leq 1/2+\lVert u_{g}(t_{j})-u_g(t_{j-1})\rVert<1,\quad 2\leq j\leq N,
\]
and thus
\[
v_g=v_g(1)\sim_h v_g(t_{N-1})\sim_h \ldots \sim_hv_g(t_1)=v_g(0)\sim_h \alpha_g(v_1)
\]
for all $g\in G$, where the last homotopy follows as
\begin{align*}
&\|v_g(0)-\alpha_g(v_1)\|\\
&\leq \|v_g(0)-(\alpha_g(f)(u_g(0)-\eins)+\eins)\|+\|(f(u-\eins)+\eins)-v_1\|<1.
\end{align*}
Thus we have that 
\begin{equation}\label{eqn:1}
\sum_{g\in G} g\cdot[v_1]=\sum_{g\in G}[v_g]=[v_{1}v_{g_1}\ldots v_{g_{n}}]
\end{equation}
where $1,g_1,\ldots, g_{n}$ is an enumeration of elements of $G$ with $n=|G|-1$. 
Set $C=3^{|G|}|G|$.
Now note that 
\begin{align*}
v_1v_{g_1}\ldots v_{g_n}&=_{C\varepsilon} (f(u-\eins)+\eins)(\alpha_{g_1}(f)(u-\eins)+\eins)\ldots (\alpha_{g_{n}}(f)(u-\eins)+\eins)\\
&=_{C\varepsilon}\sum_{g\in G}\alpha_g(f)(u-\eins)+\eins\\
&=_{\varepsilon} u.
\end{align*}
Also
$v_1^m=_{(2m3^m+1)\varepsilon}f(u^m-\eins)+\eins=_{\varepsilon} w(1)$.
Thus choosing $\varepsilon$ sufficiently small we have that
\[
\sum_{g\in G}g\cdot[v_1]=[u]
\]
and also
\[
v_1^m\sim_h w(1).
\]
Moreover, by \eqref{eqn:2} we have that for $1< j\leq N$
\[
\lVert w(t_j)-w(t_{j-1})\rVert \leq 1/2 +\lVert s(t_j)-s(t_{j-1})\rVert<1
\]
and thus
\[
v_1^m\sim_h w(1)\sim_h w(t_{N-1})\ldots \sim_hw(t_1)\sim_h w(0)=\eins.
\]
In particular, the displayed relations above imply that $[v_1]^m=[1]$ and $[v_1]$ is an element of ${}_m K_1(A)$ satisfying $N_m([v_1])=x$.

Next we need to show $\hat{H}^{-1}(G,{}_m K_1(A))=0$, i.e., that if $x\in {}_m K_1(A)$ satisfies $\sum_{g\in G}gx=0$ then $x\in \langle gz-z: z\in {}_m K_1(A), g\in G\rangle$. 
\par Let $(\alpha,\fu)$ be a lifting of the $G$-kernel $\alpha$ and let $1,g_1,g_2,\ldots ,g_n$ for $n=|G|-1$ be an enumeration of all elements in $G$.
Let $x\in {}_m K_1(A)$ such that $\sum_{g\in G}gx=0$ and $u\in \cU(\eins+A)$ such that $x=[u]$. By assumption $[u^m]=[\eins]=[u\alpha_{g_1}(u)\ldots\alpha_{g_n}(u)]$ so there exist continuous paths $s, U: [0,1]\to\cU(\eins+A)$ and $0=t_1\leq t_2\leq \ldots t_N= 1$ such that
\begin{align*}
U(0)=u\alpha_{g_1}(u)\ldots \alpha_{g_n}(u)&,\quad U(1)=\eins,\\
s(0)=u^m &,\quad s(1)=\eins,\\
\lVert U(t_j)-U(t_{j-1})\rVert&\leq 1/4,\\
\lVert s(t_j)-s(t_{j-1})\rVert&\leq 1/4,\\
\end{align*}
for all $1< j \leq N$.
Using the Rokhlin property via Lemma~\ref{lem:Rp}, let $1/4>\varepsilon>0$ and choose a positive contraction $f\in A_1^+$ such that
\begin{align*}
    \alpha_g(f)\fu_{s,r}^*\fu_{k,l}(\alpha_h(u)-\eins)\fu_{k,l}^*&=_{\varepsilon/5}\fu_{s,r}^*\fu_{k,l}(\alpha_h(u)-\eins)\fu_{k,l}^*\alpha_g(f),\\
    \alpha_g(f^2)(\alpha_h(u)-\eins)&=_{\varepsilon/5} \alpha_g(f)(\alpha_h(u)-\eins),\\
    \alpha_g(f)\alpha_h(f)(u-\eins)&=_{\varepsilon/5} 0,\\
    \sum_{q\in G}\alpha_q(f)(u-\eins)&=_{\varepsilon/5} u-\eins,\\
    f(U(t_j)-\eins)&=_{\varepsilon/5}(U(t_j)-\eins)f,\\
    f^2(U(t_j)-\eins)&=_{\varepsilon/5} f(U(t_j)-\eins),\\
    \alpha_g(f)(\alpha_h(s(t_j))-\eins)&=_{\varepsilon/5}(\alpha_h(s(t_j))-\eins)\alpha_g(f),\\
     \alpha_g(f)^2(\alpha_h(s(t_j))-\eins)&=_{\varepsilon/5}\alpha_g(f)(\alpha_h(s(t_j))-\eins),
\end{align*}
for all $s,r,g\neq h,k,l\in G$, $1\leq j\leq N$. 
These conditions imply that
\begin{align*}
    (\alpha_g(f)(\alpha_h(u)-\eins)+\eins)^*(\alpha_g(f)(\alpha_h(u)-\eins)+\eins)&=_{\varepsilon} \eins\\
    (\alpha_g(f)(\alpha_h(u)-\eins)+\eins)(\alpha_g(f)(\alpha_h(u)-\eins)+\eins)^*&=_{\varepsilon} \eins,\\
    (f(U(t_j)-\eins)+\eins)^*(f(U(t_j)-\eins)+\eins)&=_{\varepsilon} \eins,\\
    (f(U(t_j)-\eins)+\eins)(f(U(t_j)-\eins)+\eins)^*&=_{\varepsilon} \eins,\\
    (\alpha_g(f)(\alpha_h(s(t_j))-\eins)+\eins)^*(\alpha_g(f)(\alpha_h(s(t_j))-\eins)+\eins)&=_{\varepsilon} \eins,\\
    (\alpha_g(f)(\alpha_h(s(t_j))-\eins)+\eins)(\alpha_g(f)(\alpha_h(s(t_j))-\eins)+\eins)^*&=_{\varepsilon} \eins,
\end{align*}
and so there are unitaries $v_{g,h}, \tilde{U}(t_j),s_{g,h}(t_j)\in \cU(\eins+A)$ and for $g,h\in G$ and $1\leq j\leq N$ such that
\begin{align*}
\|v_{g,h}-(\alpha_g(f)(\alpha_h(u)-\eins)+\eins)\|&<\varepsilon,\\
\lVert \tilde{U}(t_j)-(f(U(t_j)-\eins)+\eins)\rVert &<\varepsilon,\\
\lVert s_{g,h}(t_j)-(\alpha_g(f)(\alpha_h(s(t_j))-\eins)+\eins)\rVert&<\varepsilon.
\end{align*}
As before, set $C=3^{|G|}|G|$.
Firstly note that 
\begin{align}
    \alpha_k(v_{g,h})&=_{\varepsilon/5}\alpha_k\alpha_g(f)(\alpha_{k}\alpha_h(u)-\eins)+\eins \label{approxeq1}\\
    &=\fu_{k,g}\alpha_{kg}(f)\fu_{k,g}^*\fu_{k,h}(\alpha_{kh}(u)-\eins)\fu_{k,h}^*+\eins\notag\\
    &=_{\varepsilon/5} \fu_{k,h}(\alpha_{kh}(u)-\eins)\fu_{k,h}^*\alpha_{kg}(f)+\eins\notag\\
    &=_{\varepsilon/5}\fu_{k,h}(\alpha_{kg}(f)(\alpha_{kh}(u)-\eins)+\eins)\fu_{k,h}^*\notag\\
    &=_{\varepsilon/5}\fu_{k,h}v_{kg,kh}\fu_{k,h}^*\notag
\end{align}
and
\begin{align}
&v_{1,1}v_{g_1,1}\ldots v_{g_n,1} \label{approxeq2} \\
&=_{C\varepsilon} (f(u-\eins)+\eins)(\alpha_{g_1}(f)(u-\eins)+\eins)\ldots (\alpha_{g_n}(f)(u-\eins)+\eins)\notag\\
&=_{C\varepsilon}\sum_{g\in G}\alpha_g(f)(u-\eins)+\eins\notag\\
&=_{\varepsilon} u. \notag
\end{align}
 We also have that
\begin{align}
&v_{1,1}v_{1,g_1}\ldots v_{1,g_n}\notag\\
&=_{C\varepsilon}(f(u-\eins)+\eins)(\alpha_{g_1}(f)(u-\eins)+\eins)\ldots (\alpha_{g_n}(f)(u-\eins)+\eins)  \notag\\
&=_{C\varepsilon} f(u\alpha_{g_1}(u)\ldots \alpha_{g_n}(u)-\eins)+\eins \notag\\
&=_{\varepsilon}\tilde{U}(0)\label{approxeq3}
\end{align}
and for any $g,h\in G$
\begin{align}
v_{g,h}^m&=_{m3^m\varepsilon} (\alpha_g(f)(\alpha_h(u)-\eins)+\eins)\ldots (\alpha_g(f)(\alpha_h(u)-\eins)+\eins) \label{approxeq4}\\
&=_{m3^m\varepsilon} \alpha_g(f)(\alpha_h(u^m)-\eins)+\eins \notag\\
&=_\varepsilon s_{g,h}(0). \notag
\end{align}
As a consequence of \eqref{approxeq1}-\eqref{approxeq4} it follows 
that choosing $\varepsilon$ sufficiently small yields
\[
[\alpha_k(v_{g,h})]=[v_{kg,kh}],\quad \sum_{g\in G}[v_{g,1}]=[u],\quad \sum_{g\in G}[v_{1,g}]=[\tilde{U}(0)],\]
\[
m[v_{g,h}]=[s_{g,h}(0)], 
\]
for all $g,h,k\in G$.
As 
\begin{align*}
\lVert\tilde{U}(t_j)-\tilde{U}(t_{j-1})\rVert\leq 1/2+\lVert U(t_j)-U(t_{j-1})\rVert <1
\end{align*}
we have also that 
\[
\sum_{g\in G}[v_{1,g}]=[\tilde{U}(0)]=[\tilde{U}(t_2)]=\ldots =[\tilde{U}(t_N)]=[\eins]=0\footnote{here 0 denotes the additive unit in $K_1(A)$ which is given by the class of $\eins$ in the unitisation of $A$.}
\]
and similarly
\begin{equation}\label{eqn:anihilatem}
m[v_{g,h}]=[s_{g,h}(t_N)]=[\eins]=0.
\end{equation}
Now let 
\[
x_1=\sum_{1\neq g}[v_{1,g}]=-[v_{1,1}].
\]
By the previous computations it follows that
\[
    g_1x_1-x_1=[v_{1,1}]+[v_{g_1,1}]+\sum_{g\neq 1,g_1^{-1}}[v_{g_1,g_1g}].
\]
Letting 
\[
x_k=\sum_{g\neq 1,g_1^{-1},\ldots, g_{k-1}^{-1}}[v_{g_{k-1},g_{k-1} g}]
\]
for $1<k\leq n$ one has inductively that
\[
g_kg_{k-1}^{-1}x_k=[v_{g_k,1}]+x_{k+1}.
\]
Therefore, denoting $1$ by $g_0$ we have that
\[
    \sum_{k=1}^n g_kg_{k-1}^{-1}x_k-x_k=\sum_{g\in G}[v_{g,1}]=[u].
\]
Finally, it remains to show that $mx_k=0$ for all $1\leq k\leq n$.
This is a consequence of \eqref{eqn:anihilatem}.
\end{proof}
We will now use Theorem \ref{thm:generalcohomologyvanishing} to show that the conjugacy of the K-theory modules induced by two anomalous actions with the Rokhlin property automatically implies the conjugacy of the total K-theory modules.
This follows the same strategy as in \cite{IZU04II}, but we shall briefly sketch the strategy before we proceed with the proof.

As mentioned in Remark \ref{rmk:splitting}, for a C$^*$-algebra $A$ one can always choose splittings $s^i_{A,n}\colon{}_n K_{1-i}(A)\rightarrow K(A;\bZ_n)$ of the Künneth exact sequence that are compatible with the natural morphisms introduced in Section \ref{sec:totalK}, i.e.\ that
\begin{equation*}
\kappa_{m,n}\circ s^i_{A,n}=s^i_{A,m}\circ \times \frac{n}{(m,n)}.
\end{equation*}
Using these splittings we have isomorphisms
\begin{align*}
\psi^i_{A,n}:K_i(A)_{n}\oplus {}_{n}K_{1-i}(A)&\rightarrow K_i(A;\bZ_{n})\\
    x\oplus y&\mapsto \rho^i_{A,n}(x)+s^i_{A,n}(y).
\end{align*}
And it is a straightforward computation to check that if $\Phi_i:K_i(A)\rightarrow K_i(B)$ for $i=0,1$ are homomorphisms of abelian groups, then
\[
\psi^i_{B,n}\circ \Phi_i\oplus \Phi_{1-i}\circ(\psi^i_{A,n})^{-1}\colon K_i(A;\bZ_n)\rightarrow K_i(B,\bZ_n)
\]
for $n\in \bN$ and $i=0,1$ induces a $\Lambda$-morphism from $\underline{K}(A)$ to $\underline{K}(B)$ (by making use of \eqref{diag:compatibility}).
If we can choose the splittings $s_{A,n}^i$ equivariant, then this $\Lambda$-morphism will also be an equivariant one.
This is precisely what we do in the next lemma.\footnote{In fact we only take care of the splittings for prime powers, which suffices by \cref{lem:primesimplification}.}
For $G$-modules $M$ and  $M'$ we equip $\Hom(M,M')$ with the $G$-module structure $g\cdot f(m)=gf(g^{-1}m)$ for $m\in M$, $g\in G$ and $f\in \Hom(M,M')$.

\begin{lemma}[{cf.\ \cite[Lemma 4.1]{IZU04II}}]\label{lem:equivariantsplitting}
Let $\alpha:G\rightarrow \Out(A)$ be a $G$-kernel with the Rokhlin property of a finite group $G$ on a separable C$^*$-algebra $A$.
Then for every prime $p$, $m\in \bN$ and $i=0,1$ there exist equivariant splitting maps
    \[
    s_{A,p^m}^i:{}_{p^m}K_{1-i}(A)\rightarrow K_i(A;\bZ_{p^m})
    \]
for the Künneth sequence \eqref{eqn:Kunneth} such that
    \begin{align}
    s^i_{A,p^{m+1}}\circ \iota^{1-i}_{A,p^m}&=\kappa_{p^{m+1},p^m}\circ s^i_{A,p^m}\label{eq:compatiblesplitting1}\\
    s^i_{A,p^m}\circ \times p&=\kappa_{p^m,p^{m+1}}\circ s^i_{A,p^{m+1}}\label{eq:compatiblesplitting2}
    \end{align}
where $\iota^i _{A,p^m}:{}_{p^m}K_i(A)\rightarrow {}_{p^{m+1}}K_i(A)$ is the inclusion map.
\end{lemma}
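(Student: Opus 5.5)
Since the $G$-module structures on $K_i(A)$ and $K_i(A;\bZ_{p^m})$ depend only on the $G$-kernel, I will work purely algebraically, following \cite[Lemma 4.1]{IZU04II}. For each $m$, Theorem \ref{thm:generalcohomologyvanishing} applies to the $G$-kernel $\alpha$ itself. Applied to the $G$-kernel $\alpha\otimes\id_{\cO_{p^m+1}}$, which has the Rokhlin property by Lemma \ref{lem:Rokhlintensor}, it also shows that $K_i(A;\bZ_{p^m})=K_i(A\otimes\cO_{p^m+1})$ is completely cohomologically trivial. The Künneth sequence \eqref{eqn:Kunneth}
\[
0\to K_i(A)_{p^m}\to K_i(A;\bZ_{p^m})\to {}_{p^m}K_{1-i}(A)\to 0
\]
is a sequence of $G$-modules, since the maps are natural and $\alpha$ acts through automorphisms. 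All three terms are $\bZ_{p^m}$-modules, and the outer ones are cohomologically trivial by complete cohomological triviality.

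First I split a single level equivariantly. By Bockstein's theorem (Remark \ref{rmk:splitting}) the sequence splits as abelian groups, so it defines a class in $\mathrm{Ext}$. An equivariant splitting exists if and only if the image of the identity of ${}_{p^m}K_{1-i}(A)$ under the connecting map
\[
\Hom(\,{}_{p^m}K_{1-i},{}_{p^m}K_{1-i})^G\to H^1\big(G,\Hom({}_{p^m}K_{1-i},K_i(A)_{p^m})\big)
\]
vanishes. Here $\Hom$ carries the conjugation action, and the sequence of $\Hom$-groups is exact because the sequence splits non-equivariantly. So it suffices to show that $\Hom({}_{p^m}K_{1-i}(A),K_i(A)_{p^m})$ has vanishing $H^1$. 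By Theorem \ref{theorem:inducedmodules}, ${}_{p^m}K_{1-i}(A)$ is an inductive limit of induced modules $\bZ G\otimes X_j$. Then $\Hom(\bZ G\otimes X_j,N)$ is coinduced, hence cohomologically trivial. Passing to the limit, $\Hom(\varinjlim,N)=\varprojlim\Hom$, which needs a $\varprojlim^1$ argument (Milnor sequence). Alternatively, I would use that the source is cohomologically trivial and $p$-primary torsion, and invoke the standard fact that $\Hom(P,N)$ is cohomologically trivial when $P$ is cohomologically trivial and $\bZ$-flat over $\bZ_{p^m}$. Izumi handles exactly this step, and I would cite his argument.

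Next I arrange the compatibilities \eqref{eq:compatiblesplitting1} and \eqref{eq:compatiblesplitting2} by induction on $m$. Suppose equivariant splittings $s_{p^k}$ for $k\le m$ are fixed and compatible. A candidate $s_{p^{m+1}}$ is any equivariant splitting; the set of such is a torsor over $\Hom_G({}_{p^{m+1}}K_{1-i},K_i(A)_{p^{m+1}})$. Conditions \eqref{eq:compatiblesplitting1} and \eqref{eq:compatiblesplitting2} prescribe $s_{p^{m+1}}$ on the subgroup ${}_{p^m}K_{1-i}\subseteq{}_{p^{m+1}}K_{1-i}$, and they prescribe its composite with $\kappa_{p^m,p^{m+1}}$. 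Following the non-equivariant Bockstein construction, I reduce these to the single condition that the correction $\delta$ restricts to a prescribed equivariant map on ${}_{p^m}K_{1-i}$. The task is then to extend an equivariant homomorphism from ${}_{p^m}K_{1-i}$ to ${}_{p^{m+1}}K_{1-i}$ into $K_i(A)_{p^{m+1}}$. Using the compatibility diagram \eqref{diag:compatibility}, the second condition should follow automatically from the first together with the inductive hypothesis.

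The extension problem is where I expect the main difficulty. Non-equivariantly it is solvable by Bockstein. Equivariantly, the obstruction lies in $\mathrm{Ext}^1_{\bZ G}$ of the quotient ${}_{p^{m+1}}K/{}_{p^m}K\cong p^m({}_{p^{m+1}}K)$, which is again cohomologically trivial by complete cohomological triviality. My first attempt is to average a non-equivariant extension using the relative norm trick on induced modules. To do this, I would write everything as inductive limits of induced modules via Theorem \ref{theorem:inducedmodules} and use that for induced modules $\bZ G\otimes X$ one can extend on $X$ and then induce. If that becomes messy, I would fall back on Izumi's original proof of \cite[Lemma 4.1]{IZU04II}. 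That proof uses only complete cohomological triviality of the modules involved, which we now have via Theorem \ref{thm:generalcohomologyvanishing} for $G$-kernels.
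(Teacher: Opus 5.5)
Your last-resort plan, running Izumi's proof of \cite[Lemma 4.1]{IZU04II} with his Theorem 3.3 replaced by Theorem \ref{thm:generalcohomologyvanishing}, is exactly what the paper does. The argument you actually sketch, however, contains a false step: you claim that \eqref{eq:compatiblesplitting2} follows automatically from \eqref{eq:compatiblesplitting1} and the inductive hypothesis. Write $s_{p^{m+1}}=s'+\rho_{p^{m+1}}\delta$ and use \eqref{diag:compatibility}. Condition \eqref{eq:compatiblesplitting1} prescribes $\delta\circ\iota^{1-i}_{p^m}$. Condition \eqref{eq:compatiblesplitting2} prescribes $\pi^i_{p^m}\circ\delta$, where $\pi^i_{p^m}\colon K_i(A)_{p^{m+1}}\to K_i(A)_{p^m}$ is the quotient map. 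These two conditions are independent as soon as $K_i(A)$ has $p$-torsion. For instance, take $G$ trivial, $K_{1-i}(A)=\bZ/p^2\bZ$, $K_i(A)=\bZ/p\bZ$, and let $\delta\colon\bZ/p^2\bZ\to\bZ/p\bZ$ be the reduction map. Then $\delta\circ\iota^{1-i}_p=0$ but $\kappa_{p,p^2}\rho_{p^2}\delta=\rho_p\pi^i_p\delta\neq 0$. So adding $\rho_{p^2}\delta$ to a compatible pair of splittings preserves \eqref{eq:compatiblesplitting1} and destroys \eqref{eq:compatiblesplitting2}. Hence the equivariant correction must be constrained on both sides: it must lie in $\Hom(\coker\iota^{1-i}_{p^m},\ker\pi^i_{p^m})$. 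What you must kill is therefore the class of $g\mapsto g\cdot\tilde\delta-\tilde\delta$, for a non-equivariant Bockstein-compatible $\tilde\delta$, in $H^1(G,\Hom(\coker\iota^{1-i}_{p^m},\ker\pi^i_{p^m}))$. It is not an extension obstruction with target all of $K_i(A)_{p^{m+1}}$. The paper kills this class using that $\ker\pi^i_{p^m}=p^mK_i(A)_{p^{m+1}}$ is cohomologically trivial, together with Izumi's Lemmas 3.11(1) and 3.12(2). Your observation that $\coker\iota^{1-i}_{p^m}\cong p^m({}_{p^{m+1}}K_{1-i}(A))$ is cohomologically trivial would serve equally well, since both modules are killed by $p$. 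But it has to be applied to this two-sided problem.

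Your first step, producing an equivariant splitting at every level $p^m$ separately, is also not justified for $m\geq 2$. The Milnor sequence leaves the term $\varprojlim^1_j\Hom_G(\bZ G\otimes X_j,N)$, which you do not control. Moreover, ${}_{p^m}K_{1-i}(A)$ is in general not free over $\bZ/p^m\bZ$ (e.g.\ when $K_{1-i}(A)$ has $\bZ/p\bZ$-summands), so the flatness fact does not apply. The step is also unnecessary, and the argument the paper reproduces from Izumi does not use it. The paper only makes the bottom splitting $t_p$ equivariant. At that level ${}_pK_{1-i}(A)$ and $K_i(A)_p$ are $\bZ/p\bZ$-vector spaces, and $\Hom({}_pK_{1-i}(A),K_i(A)_p)$ is cohomologically trivial because $K_i(A)_p$ is. All higher levels are then obtained from Bockstein's non-equivariant compatible splittings by the two-sided correction above. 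So no a priori equivariant splitting at level $p^{m+1}$ is ever needed.
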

\begin{proof}
The argument is identical to that of \cite[Lemma 4.1]{IZU04II}, except that \cite[Theorem 3.3]{IZU04II} is replaced by Theorem \ref{thm:generalcohomologyvanishing}.
We nevertheless give a detailed proof to clarify how Theorem \ref{thm:generalcohomologyvanishing} enters.
By \cite{BO79,BO80} we let $t^i_{p^m}:{}_{p^m}K_{1-i}(A)\rightarrow K_i(A;\bZ_{p^m})$ be splittings of the Künneth exact sequence such that
	\begin{align*}
    \kappa_{p^{m+1},p^m}\circ t^i_{p^m}&=t^i_{p^{m+1}}\circ \iota^{1-i}_{p^m}\\
    \kappa_{p^m,p^{m+1}}\circ t^i_{p^{m+1}}&=t^i_{p^m}\circ \times p
    \end{align*}
where we have dropped $A$ from the notation of the Bockstein maps for brevity.
For the remainder of the proof fix $i\in \{0,1\}$.
We will inductively construct $r^i_{p^m}:{}_{p^m}K_{1-i}(A)\rightarrow K_i(A)_{p^m}$ for $m\geq 1$ such that
    \begin{align}
    g\cdot t^i _{p^m}-t_{p^m}^i&=g\cdot \rho^i_{p^m}\circ r^i_{p^m}-\rho^i_{p^m}\circ r_{p^m}^i,\label{eqn:inductive1}\\
    r^i_{p^{m+1}}\circ \iota^{1-i}_{p^{m}}&=\times p\circ r_{p^m}^i,\label{eqn:inductive2}\\
    \pi^i_{p^{m}}\circ r^i_{p^{m+1}}&=r^i_{p^m}\circ \times p,\label{eqn:inductive3}
    \end{align}
where $\pi^i_{p^m}:K_i(A)_{p^{m+1}}\rightarrow K_i(A)_{p^{m}}$ is the canonical quotient map. This will yield the required result.
Indeed, setting $s^i_{p^m}=t_{p^m}^i-\rho_{p^m}^i\circ r_{p^m}^i$ we have that by \eqref{eqn:inductive1} $g\cdot s^i_{p^m}=s^i_{p^m}$ and it is a straightforward computation from \eqref{diag:compatibility} that \eqref{eqn:inductive2} and \eqref{eqn:inductive3} are equivalent to
    \begin{align*}
    \kappa_{p^{m+1},p^m}\circ \rho^i_{p^m}\circ r^i_{p^m}&=\rho_{p^{m+1}}^i\circ t^i_{p^{m+1}}\circ \iota^{1-i}_{p^m}\\
    \kappa_{p^m,p^{m+1}}\circ \rho_{p^{m+1}}^i\circ r^i_{p^{m+1}}&=\rho^i _{p^m}\circ r^i_{p^m}\circ \times p
    \end{align*}
which is precisely that $\rho^i_{p^m}\circ r^i_{p^m}$ is compatible with the Bockstein operations, and thus so is $s^i_{p^m}$.
For the case $m=1$ note that the $g\cdot t_{p}^i-t_{p}^i$ defines a $1$-cocycle with values in $\Hom({}_p K_{1-i}(A),\im(\rho_p^i))$.
Thus  $(\rho_{p}^i)^{-1}(g\cdot t_{p}^i-t_{p}^i)$ is an element of $Z^1(G,\Hom({}_p K_{1-i}(A),K_i(A)_p))$.
By Theorem \ref{thm:generalcohomologyvanishing} we have that $K_i(A)_p$ is cohomology vanishing and thus by \cite[Lemma 3.12 (2), Lemma 3.11 (1)]{IZU04II} also $\Hom({}_pK_{1-i}(A),K_i(A)_p)$.
Thus every element of $Z^1(G,\Hom({}_p K_{1-i}(A),K_i(A)_p))$ is a coboundary and there exists an element $r^i_p\in\Hom({}_p K_{1-i}(A),K_i(A)_p)$ such that
    \[
    (\rho_{p}^i)^{-1}(g\cdot t_{p}^i-t_{p}^i)=\partial r_{p}^i(g)=g\cdot r^i_{p}-r^i_p
    \]
as required.
By induction let $r_{p^j}^i$ for $1\leq j\leq m$ be such that \eqref{eqn:inductive1}-\eqref{eqn:inductive3} hold.
Then, a non-equivariant argument (see e.g. the Lemma in \cite[Section 2]{BO80} or the proof of \cite[Lemma 4.1]{IZU04II}) there exists $\ti{r}_{p^{m+1}}^i\colon{}_{p^{m+1}} K_{1-i}(A)\rightarrow K_i(A)_{p^{m+1}}$ satisfying
    \begin{align*}
    \ti{r}_{p^{m+1}}^i\circ \iota_{p^m}^{1-i}&=\times p\circ r_{p^m}^i\\
    \pi^i_{p^m}\circ \ti{r}_{p^{m+1}}^i&=r^i_{p^m}\circ \times p.
    \end{align*}
I.e. $\ti{r}_{p^{m+1}}^i$ satisfies \eqref{eqn:inductive2} and \eqref{eqn:inductive3}.
Note that for any function $f\in \Hom(\coker(\iota_{p^m}^{1-i}),\ker(\pi_{p^m}^i))$, the function $r^i_{p^{m+1}}=\ti{r}^i_{p^{m+1}}+f$ will still satisfy \eqref{eqn:inductive2} and \eqref{eqn:inductive3}.
We will choose such $f$ to ensure that $r^i_{p^{m+1}}$ also satisfies \eqref{eqn:inductive1}.
Let
    \[
    c(g)=(\rho_{p^{m+1}}^i)^{-1}(g\cdot t_{p^{m+1}}^i-t_{p^{m+1}}^i)-(g\cdot \ti{r}_{p^{m+1}}^i-\ti{r}_{p^{m+1}}^i).
    \]
Then $c$ defines a $1$-cocycle valued in $\Hom({}_{p^{m+1}}K_{1-i}(A),K_i(A)_{p^{m+1}})$ that measures the failure of \eqref{eqn:inductive1}.
Moreover,
    \begin{align*}
    c(g)\iota_{p^m}^{1-i}&=(\rho_{p^{m+1}}^i)^{-1}(g\cdot t_{p^{m+1}}^i-t_{p^{m+1}}^i)\iota_{p^m}^{1-i}-(g\cdot \ti{r}_{p^{m+1}}^i-\ti{r}_{p^{m+1}}^i)\iota_{p^m}^{1-i}\\
    &=(\rho_{p^{m+1}}^i)^{-1}\kappa_{p^{m+1},p^m}(g\cdot t^i_{p^m}-t^i_{p^m})-\times p\circ (g\cdot r^i_{p^m}-r^i_{p^m})\\
    &=(\rho_{p^{m+1}}^i)^{-1}\kappa_{p^{m+1},p^m} \rho^i_{p^m}((\rho^i_{p^m})^{-1}(g\cdot t^i_{p^m}- t^i _{p^m})-g\cdot r^i_{p^m}+r^i_{p^m})\\
    &=0
    \end{align*}
by the induction hypothesis. 
Similarly $\pi^i_{p^m} c(g)=0$.
Thus $c$ induces an element of $Z^1(G,\Hom(\coker(\iota_{p^m}^{1-i}),\ker(\pi_{p^m}^i))$.
By Theorem \ref{thm:generalcohomologyvanishing} the $G$-module $p^mK_i(A)_{p^{m+1}}=\ker(\pi_{p^m}^i)$ is completely cohomologically trivial.
Thus by \cite[Lemma 3.12 (2), Lemma 3.11 (1)]{IZU04II} so is $\Hom(\coker(\iota_{p^m}^{1-i}),\im(\pi_{p^m}^i))$.
In particular, there exists $f\in \Hom(\coker(\iota_{p^m}^{1-i}),\im(\pi_{p^m}^i))$ such that
    \[
    (\rho_{p^{m+1}}^i)^{-1}(g\cdot t_{p^{m+1}}^i-t_{p^{m+1}}^i)-(g\cdot \ti{r}_{p^{m+1}}^i-\ti{r}_{p^{m+1}}^i)=\partial f(g)=g\cdot f-f.
    \]
Then $r_{p^{m+1}}^i=\ti{r}_{p^{m+1}}^i+f$ satisfies \eqref{eqn:inductive1}-\eqref{eqn:inductive3}.
\end{proof}

We may now improve Corollary \ref{cor:classkirchbergtotalK} to simplify the classifying invariant of anomalous actions with the Rokhlin property on Kirchberg algebras.

\begin{theorem}\label{thm:anomclassKtheory}
Let $G$ be a finite group and let $A$ and $B$ be two separable C$^*$-algebras in the UCT class.
Let $(A,\alpha,\fu)$ and $(B,\beta,\fv)$ be anomalous actions of $G$ with the Rokhlin property and $o(\alpha,\fu)=o(\beta,\fv)$.
	\begin{enumerate}[leftmargin=*,label=\textup{(\roman*)}]
	\item Suppose $A$ and $B$ are stable Kirchberg algebras.
	Then $(A,\alpha,\fu)$ and $(B,\beta,\fv)$ are cocycle conjugate if and only if $K_*(\alpha)$ is conjugate to $K_*(\beta)$.\label{item:thm5.7stable}
	\item Suppose $A$ and $B$ are unital, simple, nuclear, $\cZ$-stable C$^*$-algebras of real rank zero.
	Then $(A,\alpha,\fu)$ and $(B,\beta,\fv)$ are cocycle conjugate if and only if $K_*(\alpha)$ is conjugate to $K_*(\beta)$ in a way that preserves the order and the class of the unit in $K_0$.\label{item:thm5.7unital}
	\end{enumerate}
\end{theorem}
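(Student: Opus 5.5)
The plan is to reduce Theorem \ref{thm:anomclassKtheory} to Corollary \ref{cor:classkirchbergtotalK}. The "only if" directions are immediate, since a cocycle conjugacy $(\Phi_1,\bv)$ induces $K_*(\Phi_1)$, which intertwines $K_*(\alpha_g)$ and $K_*(\beta_g)$ because $\Phi_1\alpha_g=\Ad(\bv_g)\beta_g\Phi_1$ and inner automorphisms act trivially on K-theory. In the unital case $K_0(\Phi_1)$ is moreover an order isomorphism preserving the unit class. For the converse, it suffices to upgrade a conjugacy $\Phi_i\colon K_i(A)\to K_i(B)$ of $G$-modules (order and unit preserving in case \ref{item:thm5.7unital}) to a $G$-equivariant $\Lambda$-isomorphism $\underline{K}(A)\to\underline{K}(B)$ with the same $K_*$-components. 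Corollary \ref{cor:classkirchbergtotalK} then gives the cocycle conjugacy; in case \ref{item:thm5.7unital} the order and unit conditions only concern $K_0$, so they are inherited.

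To build the $\Lambda$-isomorphism, apply Lemma \ref{lem:equivariantsplitting} to both $\alpha$ and $\beta$. Both have the Rokhlin property as $G$-kernels, and their induced module structures on the $K$-groups with coefficients are well defined because these are functors invariant under inner automorphisms. This yields $G$-equivariant splittings $s^i_{A,p^m}$ and $s^i_{B,p^m}$ compatible with the Bockstein maps via \eqref{eq:compatiblesplitting1} and \eqref{eq:compatiblesplitting2}. Define
\[
\Phi^i_{p^m}=\psi^i_{B,p^m}\circ(\overline{\Phi_i}\oplus\Phi_{1-i}|)\circ(\psi^i_{A,p^m})^{-1}\colon K_i(A;\bZ_{p^m})\to K_i(B;\bZ_{p^m}),
\]
where $\overline{\Phi_i}$ is the induced map on $K_i(\cdot)_{p^m}$ and $\Phi_{1-i}|$ is the restriction to ${}_{p^m}K_{1-i}$. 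One then checks three things. First, this commutes with $\rho$ and $\beta$; this is immediate from the direct sum decomposition. Second, it commutes with the Bockstein maps $\kappa_{p^{m+1},p^m}$ and $\kappa_{p^m,p^{m+1}}$; this uses diagram \eqref{diag:compatibility} on the $\rho$-part and \eqref{eq:compatiblesplitting1}, \eqref{eq:compatiblesplitting2} on the splitting part, together with the fact that $\Phi_*$ commutes with multiplication by $p$ and with the inclusions. Third, it is $G$-equivariant, since $\rho$, $\beta$, the splittings and $\Phi_*$ all are. This produces an element of $\Hom_\Lambda(F_P\underline{K}(A),F_P\underline{K}(B))$, which is bijective since each component is, with inverse built the same way from $\Phi_*^{-1}$.

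Now apply Lemma \ref{lem:primesimplification} to extend to a $\Lambda$-isomorphism $\underline{\Phi}=\Phi_{A,B}(\cdot)$ on all coefficients, with inverse $\Phi_{B,A}$ of the inverse by functoriality. Equivariance comes from the same naturality: $\underline{K}(\beta_g)\circ\underline{\Phi}$ and $\underline{\Phi}\circ\underline{K}(\alpha_g)$ restrict to the same element on $F_P\underline{K}$. By the composition rule in Lemma \ref{lem:primesimplification}, and since $\Phi_{B,B}$ and $\Phi_{A,A}$ send restrictions of genuine $\Lambda$-morphisms back to those morphisms, the two maps agree. The main obstacle is this bookkeeping, in particular making sure the composition rule applies, which needs $B$ separable and UCT. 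This holds here. The Bockstein compatibility check for the split maps is routine but notationally heavy.
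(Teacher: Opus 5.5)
Your proposal is correct and follows the paper's proof essentially step by step. Lemma \ref{lem:equivariantsplitting} gives equivariant, Bockstein-compatible splittings, which turn the $K_*$-conjugacy into an equivariant element of $\Hom_{\Lambda}(F_P\underline{K}(A),F_P\underline{K}(B))$; Lemma \ref{lem:primesimplification} extends this (with equivariance checked through the composition rule, as you do) to an equivariant $\Lambda$-isomorphism, and Corollary \ref{cor:classkirchbergtotalK} then finishes both cases.
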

\begin{proof}
The ``only if'' parts are immediate, so we prove the ``if'' parts.

\ref{item:thm5.7stable}: Let $\Psi^i:K_i(A)\rightarrow K_i(B)$ be equivariant isomorphisms for $i=0,1$.
As $(\alpha,\fu)$ and $(\beta,\fv)$ have the Rokhlin property we may choose equivariant splittings of the Künneth formula $s^i_{A,p^m}$ and $s^i_{B,p^m}$ by Lemma \ref{lem:equivariantsplitting}.
In particular, we have equivariant isomorphisms
    \begin{align*}
    \psi^i_{C,p^m}:K_i(C)_{p^m}\oplus {}_{p^m}K_{1-i}(C)&\rightarrow K_i(C;\bZ_{p^m})\\
    x\oplus y&\mapsto \rho^i_{A,p^m}(x)+s^i_{A,p^m}(y)
    \end{align*}
for $i=0,1$, prime $p$, $m\in \bN$ and $C=A$ or $B$ with the respective actions induced by $\alpha$ and $\beta$.
Furthermore, we have equivariant isomorphisms
    \[
    \Psi^i_{p^m}=\psi^i_{B,p^m}\circ\Psi_i\oplus \Psi_{1-i}\circ(\psi^i_{A,p^m})^{-1}\colon K_i(A;\bZ_{p^m})\rightarrow K_i(B;\bZ_{p^m})
   \]
for $i=0,1$, $m\in \bN$ and prime $p$.
Moreover, as the splittings $s$ are compatible, it is a straightforward computation that the collection $\Psi^i_{p^m}$ ad $\Phi_i$ define an element in $\Hom_{\Lambda}(F_P\underline{K}(A),F_P\underline{K}(B))$ which we denote by $\Psi$.
Then using the notation of Lemma \ref{lem:primesimplification}, $\Phi_{A,B}(\Psi)\in \Hom_{\Lambda}(\underline{K}(A),\underline{K}(B))$ is an equivariant isomorphism.
Indeed, by the equivariance of $\Psi$ and the properties of $\Phi_{A,B}$ we have that $\Phi_{A,B}(\Psi)$ is invertible and for every $g\in G$ one has
    \[
    \Phi_{A,B}(\Psi)\underline{K}(\alpha_g)=\Phi_{A,B}(\Psi\circ F_P\underline{K}(\alpha_g))=\Phi_{A,B}(F_P\underline{K}(\beta_g)\circ \Psi)=\underline{K}(\beta_g)\Phi_{A,B}(\Psi).
    \]
Thus the result follows from \cref{cor:classkirchbergtotalK}\ref{item:cor4.4stable}.

\ref{item:thm5.7unital}: The proof follows in the same way as in \ref{item:thm5.7stable} but noting that if $\Psi^0$ preserves the order and the unit then so does $\Phi_{A,B}(\Psi)$, and using Corollary \ref{cor:classkirchbergtotalK}\ref{item:cor4.4unital} in place of Corollary \ref{cor:classkirchbergtotalK}\ref{item:cor4.4stable}.
\end{proof}

As a consequence of the theorem above we also classify $G$-kernels with the Rokhlin property (see \cite[Corollary 4.8]{GP25} for a similar argument).

\begin{corollary}\label{cor:Gkerclass}
Let $A$ and $B$ be separable C$^*$-algebras in the UCT class, and let $\alpha:G\rightarrow \Out(A)$, $\beta:G\rightarrow\Out(B)$ be $G$-kernels with the Rokhlin property.
If $A$ and $B$ are stable Kirchberg algebras, then $\alpha$ and $\beta$ are conjugate if and only if $K_*(\alpha)$ and $K_*(\beta)$ are conjugate and $\ob(\alpha)=\ob(\beta)$.
If $A$ and $B$ are unital, simple, nuclear, $\cZ$-stable C$^*$-algebras of real rank zero, then the same conclusion holds, with the additional requirement that the conjugacy between $K_0(\alpha)$ and $K_0(\beta)$ preserves the order and the class of the unit.
\end{corollary}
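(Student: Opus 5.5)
The plan is to reduce the corollary to Theorem \ref{thm:anomclassKtheory} by passing between $G$-kernels and their anomalous lifts. The ``only if'' direction is formal. If $\varphi: A\to B$ is an isomorphism with $\alpha_g=\pi(\varphi^{-1}\widetilde{\beta}_g\varphi)$, then $K_*(\varphi)$ intertwines $K_*(\alpha)$ and $K_*(\beta)$; here we use that $K_*$ ignores inner automorphisms, and in the unital case $K_0(\varphi)$ preserves order and unit. Moreover, $\varphi^{-1}\widetilde{\beta}\varphi$ together with $\varphi^{-1}(\fv)$ is a lift of $\alpha$ with the same $3$-cocycle as $(\widetilde\beta,\fv)$. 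Since the lifting obstruction is independent of the chosen lift, we get $\ob(\alpha)=\ob(\beta)$.

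For the ``if'' direction, both $A$ and $B$ are simple, so $Z\cM(A)=Z\cM(B)=\bC$. By the remark following the definition of anomalous actions, we may choose lifts $(\widetilde\alpha,\fu)$ and $(\widetilde\beta,\fv)$ that are anomalous actions. Their anomalies represent $\ob(\alpha)=\ob(\beta)$, but a priori they agree only up to a coboundary. The key step is to modify one lift so that the anomalies agree on the nose.

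Write $o(\widetilde\beta,\fv)=o(\widetilde\alpha,\fu)\cdot\partial\mu$ for some normalised $\mu: G\times G\to\bT$. Replace $\fu$ by $\fu'_{g,h}=\mu_{g,h}^{\pm1}\fu_{g,h}$, with the sign fixed by the convention for $\partial$. Scalars are central and fixed by $\widetilde\alpha$, so relation \eqref{item:multiplicativity} still holds, and the new anomaly is $o(\widetilde\alpha,\fu)\cdot\partial\mu^{\pm1}$. Normalisation of $\mu$ keeps $\fu'_{g,1}=\fu'_{1,g}=\eins$. The Rokhlin property depends only on the underlying $G$-kernel, so both anomalous actions have it. I expect this cocycle adjustment to be the only point requiring care; it amounts to bookkeeping of signs.

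Now apply Theorem \ref{thm:anomclassKtheory}. The $G$-modules $K_*(\widetilde\alpha)$ and $K_*(\alpha)$ coincide, and likewise for $\beta$. So the hypothesis yields a conjugacy of $K$-theory modules, order- and unit-preserving in the unital, $\cZ$-stable, real rank zero case. Part \ref{item:thm5.7stable} or \ref{item:thm5.7unital} of that theorem then provides a cocycle conjugacy $(\varphi,\bv):(A,\widetilde\alpha,\fu')\to(B,\widetilde\beta,\fv)$; by Lemma \ref{lem: cocycleisoscoincide}, this is the same as a $G$-coherent isomorphism. Condition \ref{def:cocyclemorphism:1} of Definition \ref{def:cocyclemorphism} gives $\varphi\widetilde\alpha_g=\Ad(\bv_g)\widetilde\beta_g\varphi$ with $\bv_g\in\cU\cM(B)$. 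Hence $\pi(\varphi\widetilde\alpha_g\varphi^{-1})=\pi(\widetilde\beta_g)$, so $\varphi$ conjugates $\alpha$ to $\beta$, which finishes the proof.
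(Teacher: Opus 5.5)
Your proposal is correct and matches the argument the paper intends; the paper does not write this proof out and only points to a similar argument in earlier work. You lift both $G$-kernels to anomalous actions, rescale one unitary cocycle by a normalised scalar $2$-cochain so that the anomalies agree exactly, apply Theorem~\ref{thm:anomclassKtheory}, and read off the conjugacy of $G$-kernels from the resulting cocycle conjugacy, while the converse direction and the choice of a normalised $2$-cochain $\mu$ are the standard bookkeeping you describe.
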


\section{Rokhlin $G$-kernels realizing $K$-theory modules}\label{sec:range}

In this section we determine the $G$-modules arising as $K$-theory groups of Kirchberg algebras that admit a Rokhlin $G$-kernel with a given lifting obstruction. 
We shall start by constructing a model action. 
This is based on the construction in \cite[Lemma 5.2]{IZU04II}. 
The case when $G$ is a finite cyclic group is shown in \cite[Proposition 6.4.7]{THESIS}.

\begin{lemma}\label{lem:Gkerconstruction}
Let $G$ be a finite group and $[\lambda] \in H^3(G,\bT)$.
Let $D_G$ be the unital Kirchberg algebra in the UCT class with $K_0(D_G)=\bZ[G]$ and $K_1(D_G)=0$.
Then there exists a $G$-kernel $\alpha:G\rightarrow \Out(D_G)$ with the Rokhlin property with $\ob(\alpha)=[\lambda]$ and such that $K_0(\alpha)$ coincides with the left-translation action.
\end{lemma}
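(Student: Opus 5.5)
Following \cite[Lemma 5.2]{IZU04II}, I will build a Rokhlin anomalous action with anomaly $\lambda$ on some unital UCT Kirchberg algebra, and then use the classification results to identify the algebra with $D_G$ and to read off the $K$-theory.

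\textbf{Step 1: a model action with the right anomaly.} Start from the $\lambda$-left regular anomalous action $(\gamma,\fw)$ on $C(G)$ recalled in Section \ref{subsec:anomaction}. Here the delta function $\delta_{1_G}$ is a Rokhlin projection and $K_0(C(G))=\bZ[G]$, with $K_0(\gamma)$ equal to the left translation. We need to pass to a Kirchberg algebra without changing $K_*$. Take a unital UCT Kirchberg algebra $P$ with $K_0(P)=\bZ$, $[\eins_P]=1$, $K_1(P)=0$; the natural choice is $\cO_\infty$. Choose a genuine action $\mu$ of $G$ on $P$ that induces the trivial module on $K$-theory, for instance the trivial action. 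By Lemma \ref{lem:Rokhlintensor}, the anomalous action $(\gamma\otimes\mu,\fw\otimes\eins)$ on $C(G)\otimes P$ has the Rokhlin property and anomaly $\lambda$. However, $C(G)\otimes P\cong P^{\oplus|G|}$ is not simple.

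\textbf{Step 2: making the algebra simple.} This is where Izumi's construction is used. I would form a stationary inductive limit
\[
C(G)\otimes P \xrightarrow{\ \theta\ } C(G)\otimes P \xrightarrow{\ \theta\ } \cdots .
\]
Each connecting map $\theta$ is chosen to be a $G$-coherent (indeed genuinely equivariant) morphism that mixes all summands. Such $\theta$ can be produced as $\id_{C(G)}\otimes\sigma$ composed with the map $f\mapsto \sum_g (\gamma_g f)\otimes s_g(\cdot)s_g^*$, where $(s_g)_g$ are Cuntz isometries in $P$ with $\sum_g s_gs_g^*=\eins$. This choice makes the limit simple and makes $K_0(\theta)$ an isomorphism of $\bZ[G]$. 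Alternatively, one can invoke Theorem \ref{thm: existence} directly to produce $G$-coherent connecting maps lifting a chosen $*$-homomorphism, since $\lambda$-anomalous actions with $G$-coherent morphisms admit inductive limits by \cite{GINE23}. The limit is then a unital, simple, purely infinite, nuclear UCT algebra $D$ with $K_0(D)=\bZ[G]$ as a left regular module and $K_1(D)=0$. The Rokhlin property passes to the limit because the image of $\delta_{1_G}\otimes\eins$ gives approximate Rokhlin elements as in Lemma \ref{lem:Rp}. The anomaly is unchanged because the cocycle $\fw\otimes\eins$ is carried along the coherent maps. We also need the unit class $[\eins_D]$ to match that of $D_G$; since $D_G$ is only specified up to this choice, I take it to be the class obtained here, or adjust $[\eins]$ by cutting down by a projection of the right class, using that $\bZ[G]$ is a free module.

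\textbf{Step 3: identification.} By Kirchberg--Phillips, $D\cong D_G$. Transport the anomalous action along this isomorphism. Its associated $G$-kernel $\alpha$ then has the Rokhlin property and $\ob(\alpha)=[\lambda]$, and $K_0(\alpha)$ is left translation.

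The main obstacle is Step 2. One must check that the connecting maps are simultaneously compatible with the twisted cocycle $\fw$ (i.e.\ are morphisms of $\lambda$-anomalous actions) and strong enough to force simplicity and pure infiniteness. The scalar anomaly only enters through the tensor factor $C(G)$, so I would try first to keep every connecting map of the form $\id_{C(G)}\otimes(\text{map on }P\text{-side})$ twisted by the translations. That should make coherence automatic.
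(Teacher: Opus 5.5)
Your Step~2 has a genuine gap, and it sits exactly where the content of the lemma lies.

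\textbf{The connecting map has the wrong $K$-theory.} Every range projection $s_gs_g^*$ of an isometry has class $[\eins]$. So, whatever $\sigma$ is, $K_0(\theta)$ sends every generator $[\delta_k\otimes\eins]$ into $\bZ\cdot\sum_{g\in G}g$. Hence $K_0(\theta)$ has rank at most one, and the limit has a $K_0$-group of rank at most one with trivial $G$-action instead of $\bZ[G]$. In $\cO_\infty$ such isometries do not even exist for $|G|\geq 2$, since $\sum_g s_gs_g^*=\eins$ would give $(|G|-1)[\eins]=0$. Moreover, a map built from the left translations $\gamma_g$ does not commute with $\gamma$ for non-abelian $G$; you need right translations.

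The missing idea is Izumi's. Take a partition of unity $\{p_h\}_{h\in G}$ of \emph{nonzero} projections in $\cO_\infty$ with $[p_1]=[\eins]$ and $[p_h]=0$ for $h\neq 1$, and set $\varphi(f)(k)=\sum_h f(kh^{-1})\otimes p_h$. This mixes all summands, so the limit is simple. It induces the identity on $K_0=\bZ[G]$, and it commutes exactly with left translation.

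\textbf{Coherence is not automatic.} Your expectation that the connecting maps can be genuinely equivariant fails whenever $[\lambda]\neq 0$. With $\mathfrak{w}_{g,h}(k)=\lambda(k^{-1},g,h)$ one gets
$$\varphi(\mathfrak{w}_{g,h})(k)=\textstyle\sum_\ell\lambda(\ell k^{-1},g,h)\,p_\ell .$$
Equality with $\mathfrak{w}_{g,h}\otimes\eins$ would force, at $k=1$, $\lambda\equiv 1$. A similar computation shows that no unital map commuting with $\gamma$ and fixing $\mathfrak{w}$ can produce a simple limit unless $\lambda\equiv 1$.

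So nontrivial correction unitaries $\bv_g$ with $\bv_g\gamma_g(\bv_h)\mathfrak{w}_{g,h}\bv_{gh}^*=\varphi(\mathfrak{w}_{g,h})$ are unavoidable. The paper writes them down explicitly, $\bv_g(h)=\sum_\ell\lambda(\ell,h^{-1},g)\otimes p_\ell$; these commute with the image of $\varphi$. It then verifies the required identity from the $3$-cocycle identity of $\lambda$. This computation is the heart of the proof, and it is absent from your plan.

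\textbf{Your fallback.} It does not apply as stated: Theorem~\ref{thm: existence} needs a target that is stable or has almost stable rank one, and $C(G)\otimes\cO_\infty$ is neither. Corollary~\ref{cor:unital-existence} applied to Izumi's $\varphi$ would work. It yields unital $G$-coherent morphisms, which are cocycle morphisms by Lemma~\ref{lem: cocycleisoscoincide}, with first component approximately unitarily equivalent to $\varphi$; their limit is still $D_G$ by an approximate intertwining. But this is a much heavier substitute for two explicit unitaries, and it still requires the correct $*$-homomorphism.

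\textbf{Rokhlin property.} You should also say why the images of $\delta_1\otimes\eins$ give Rokhlin elements in the limit. The projections $\delta_g\otimes\eins$ are central, so the inner perturbations of the actions along the inductive system do not move them.
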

\begin{proof}
We use the same inductive construction of $D_G$ as in \cite[Lemma 5.2]{IZU04II}.
Choose a partition of unity $\{p_g\}_{g\in G}$ of non-zero projections in $\cO_{\infty}$ such that $[p_g]=0$ for all $g\neq 1$ and $[p_1]=[\eins]$. 
For each $n\in\mathbb N$, set 
\[
D_n=C\bigg(G,\bigotimes_{k=1}^n \cO_{\infty}\bigg)
\]
and $\varphi^{(n)}:D_n\rightarrow D_{n+1}$ defined by $\varphi^{(n)}(f)(g)=\sum_{h\in G}f(gh^{-1})\otimes p_h$.
Then the inductive limit $D_G=\lim \{ D_n,\varphi^{(n)} \}$ is a Kirchberg algebra in the UCT class with $K$-theory $K_0(D_G)=\bZ[G]$ and $K_1(D_G)=0$.
It remains to construct anomalous actions $(\alpha^{(n)},\fu^{(n)})$ on $D_n$ with $o(\alpha^{(n)},\fu^{(n)})=\lambda$ and unitaries $\bv^{(n)}_g$ for $g\in G$ such that $(\varphi^{(n)},\bv^{(n)}):(\alpha^{(n)},\fu^{(n)})\rightarrow (\alpha^{(n+1)},\fu^{(n+1)})$ define cocycle morphisms.
This data will then induce an anomalous action as the inductive limit
    \[
    (\alpha,\fu)=\lim \{ (\alpha_n,\fu_n), (\varphi_n,\bu_n)\}
    \]
on $D_G$ with $o(\alpha,\fu)=\lambda$.
For each $g,h,k\in G$ and $n\in \bN$, set 
    \begin{align*}
        \alpha^{(n)}_g(f)(h)&=f(g^{-1}h)\\
        \fu^{(n)}_{g,h}(k)&=\lambda(k^{-1},g,h)
    \end{align*}
and
    \begin{equation}
        \bv_g^{(n)}(h)=\sum_{\ell\in G}\lambda(\ell,h^{-1},g)\otimes p_\ell.
    \end{equation}
We now check that $(\varphi^{(n)},\bv^{(n)}):(\alpha^{(n)},\fu^{(n)})\rightarrow (\alpha^{(n+1)},\fu^{(n+1)})$ is a cocycle morphism.
It is clear that $\varphi^{(n)}\alpha_g^{(n)}=\alpha_g^{(n+1)}\varphi^{(n)}=\Ad(\bv_g^{(n)})\alpha_g^{(n+1)}\varphi^{(n)}$.
By the cocycle identity for $\lambda$, we see
    \begin{align*}
    &\bv_g^{(n)}\alpha_g^{(n+1)}(\bv_h^{(n)})u_{g,h}^{(n+1)}\bv_{gh}^{(n)*}(k) \\
    &=\sum_{\ell\in G} \lambda(\ell,k^{-1},g)\lambda(\ell,k^{-1}g,h)\lambda(k^{-1},g,h)\overline{\lambda(\ell,k^{-1},gh})\otimes p_\ell \\
    &=\sum_{\ell\in G}\lambda(\ell k^{-1},g,h)\otimes p_\ell \\
    &=\varphi^{(n)}(\fu_{g,h}^{(n)})(k)
    \end{align*}
for all $g,h,k\in G$ and $n\in \bN$.
Moreover, the central projections $q_g^{(n)}\in D_n$ given by the delta functions at $g\in G$ satisfy $\alpha_g^{(n)}(q_h^{(n)})=q_{gh}^{(n)}$.
Thus the sequence of projections $q=(q_1^{(n)})\in D_G$ defines a Rokhlin projection in $F_\infty(D_G)$ for $(\alpha,\fu)$.
As $\alpha^{(n)}$ is given by left translation at each stage, it is also easy to see that  $K_0(D_G)=\bZ[G]$ as a $G$-module.
\end{proof}

We may now generalise \cite[Theorem 5.3]{IZU04II} to the setting of $G$-kernels.
In light of Corollary \ref{cor:Gkerclass}, one should think of the succeeding result as a range of invariant result for $G$-kernels of a given lifting obstruction with the Rokhlin property on Kirchberg algebras.

\begin{theorem}\label{thm:rangeofinvstable}
Let $G$ be a finite group, $[\lambda]\in H^3(G,\bT)$, $M_0$ and $M_1$ a pair of countable completely cohomologically trivial $G$-modules and $e\in M_0^G$.
Then:
    \begin{enumerate}[leftmargin=*,label=\textup{(\roman*)}]
    \item There exists a $G$-kernel $\alpha:G\rightarrow \Out(A)$ with the Rokhlin property on a stable Kirchberg algebra in the UCT class such that $\ob(\alpha)=[\lambda]$ and $K_i(A)\cong M_i$ for $i=0,1$ as $G$-modules.
    Moreover $\alpha$ is unique up to conjugacy. \label{item:onethmrange}
    \item There exists a $G$-kernel with the Rokhlin property on a unital Kirchberg algebra in the UCT class $A$ such that $\ob(\alpha)=[\lambda]$ and $(K_0(A),[1]_A)\cong (M_0,e)$ and $K_1(A)\cong M_1$ as $G$-modules (with distinguished element).
    Moreover $\alpha$ is unique up to conjugacy.\label{item:twothmrange}
    \end{enumerate}
\end{theorem}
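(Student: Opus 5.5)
Uniqueness in both parts follows directly from Corollary \ref{cor:Gkerclass}, using Remark \ref{rmk:observeKirchberg} that Kirchberg algebras are unital or stable and have real rank zero. The proposal therefore concentrates on existence. The plan follows Izumi's proof of \cite[Theorem 5.3]{IZU04II}: build the algebra as a tensor product of a Rokhlin model carrying the anomaly with a genuine Rokhlin action carrying the prescribed K-theory. We then correct the anomaly with the model of Lemma \ref{lem:Gkerconstruction}.

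First, by Izumi's range theorem \cite[Theorem 5.3]{IZU04II} (whose hypotheses are exactly complete cohomological triviality and countability of $M_0,M_1$, plus $e\in M_0^G$ in the unital case), there is a genuine Rokhlin action $\gamma$ of $G$ on a stable (resp.\ unital) UCT Kirchberg algebra $B$ with $K_*(B)\cong M_*$ as $G$-modules (resp.\ with $[1_B]\mapsto e$). If one prefers not to cite this directly, one reconstructs it the way Izumi does. By Theorem \ref{theorem:inducedmodules}, $M_i$ is a countable inductive limit of countable induced modules $\bZ[G]\otimes N$. Induced modules are realized by Rokhlin actions of the form $D_G\otimes C$ with trivial action on $C$, or by the permutation action on $C(G)\otimes C$. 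One then assembles the limit using the existence theorem (Theorem \ref{thm: existence}, Corollary \ref{cor:unital-existence}) to lift the connecting module maps to equivariant morphisms. The total K-theory compatibility needed for this lifting is exactly what Lemma \ref{lem:equivariantsplitting} supplies.

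Next, let $(D_G,\delta,\fu)$ be the Rokhlin anomalous action of Lemma \ref{lem:Gkerconstruction} with anomaly $\lambda$. We want to tensor $\gamma$ with something of anomaly $\lambda$ without changing K-theory. Here $D_G$ itself has $K_0=\bZ[G]$, which is not neutral. The plan is instead to use a Rokhlin anomalous action on a UCT Kirchberg algebra $E$ that is KK-equivalent to $\cO_\infty$, with anomaly $\lambda$. One candidate is $\cO_\infty$ itself, obtained as follows. Take a Rokhlin $\lambda$-anomalous action on $M_{|G|^\infty}$ (\cite[Proposition 6.1]{GP25}) and one on a UHF algebra of coprime-to-$|G|$ supernatural type containing $\lambda$'s inverse trick. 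If that fails, use the approach in the next paragraph. The main obstacle is precisely producing such a K-theoretically trivial Rokhlin $\lambda$-kernel. The first thing I would try avoids it: set $A=B\otimes D_G$ (stabilized as needed) with the $G$-kernel $\gamma\otimes\delta$. By Lemma \ref{lem:Rokhlintensor} this is Rokhlin, its lifting obstruction is $[\lambda]$, and by the Künneth theorem $K_*(A)\cong M_*\otimes\bZ[G]$ with the diagonal action. Since $M_*$ is cohomologically trivial, there is a $G$-isomorphism $M\otimes\bZ[G]\cong M\otimes\bZ[G]_{\mathrm{triv}}\cong M^{|G|}$ ($M$ with diagonal action is induced from the underlying group). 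So the K-theory is $M_*^{\oplus|G|}$, not $M_*$.

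To remove the multiplicity, I would replace the input: apply the Izumi/induced-limit construction to modules $N_*$ chosen so that $N_*\otimes\bZ[G]\cong M_*$. This is possible precisely because $M_*$ is an inductive limit of induced modules $\bZ[G]\otimes L_k$: realize the underlying groups $L_k$ and connecting maps on a Kirchberg algebra with trivial action, then tensor with $(D_G,\delta)$ and take the inductive limit. The connecting maps are $G$-coherent morphisms that must realize the given module maps $\bZ[G]\otimes L_k\to\bZ[G]\otimes L_{k+1}$. These maps are not of the form $\id\otimes f$, so here Theorem \ref{thm: existence} and Lemma \ref{lem:equivariantsplitting} are essential. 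One realizes each connecting map first as a $*$-homomorphism whose induced total K-theory map is equivariant (via equivariant splittings and Theorem \ref{thm:classificationKirchberg}). That map is then approximately equivariant by uniqueness, and one upgrades it to a $G$-coherent morphism by Theorem \ref{thm: existence}. The inductive limit exists in the $G$-coherent category (\cite[Section 4]{GINE23}), has anomaly $\lambda$, and is Rokhlin because Rokhlin elements from the building blocks persist. The unital case uses Corollary \ref{cor:unital-existence} and tracks the unit class $e$, again following \cite[Theorem 5.3]{IZU04II}.
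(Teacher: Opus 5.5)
Once the detours are removed, your last paragraph is the paper's proof of (i). The paper builds blocks $D_G\otimes A^{(n)}$ with $K_*(A^{(n)})=M_*^{(n)}$, carrying the model action of Lemma~\ref{lem:Gkerconstruction} tensored with the trivial action. It extends the connecting module maps to equivariant $\Lambda$-morphisms via Lemma~\ref{lem:equivariantsplitting} and lifts them by Theorem~\ref{thm:classificationKirchberg}. The uniqueness clause there gives approximate equivariance, Theorem~\ref{thm: existence} upgrades the lifts to $G$-coherent morphisms, and one takes the limit.

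The preceding paragraphs should be deleted. You never need Izumi's genuine action. The ``K-theoretically neutral'' Rokhlin $\lambda$-kernel you call the main obstacle in fact cannot exist for $G\neq\{1\}$. By Theorem~\ref{thm:generalcohomologyvanishing}, its $K_0\cong\bZ$ would have to be cohomologically trivial, and no $G$-action on $\bZ$ achieves this. For the trivial action $\hat{H}^0(G,\bZ)=\bZ/|G|\bZ$; a sign action has nonzero $\hat{H}^{-1}$ on a cyclic subgroup where it is nontrivial. Similarly, a single $N_*$ with $N_*\otimes\bZ[G]\cong M_*$ need not exist: $\mathbb{Q}$ with trivial action is completely cohomologically trivial but not induced. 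Only the stagewise version, which is what you actually use, is available.

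You genuinely differ from the paper in (ii). The paper does not rerun the construction with unital blocks. It takes the stable model $B$ from (i) and picks a projection $p\in B$ with $[p]=e$. Since $e$ is $G$-fixed, there are $v_g\in\cU\cM(B)$ with $v_g\widetilde{\alpha}_g(p)v_g^*=p$, and $\Ad(v_g)\widetilde{\alpha}_g$ restricts to a $G$-kernel on the corner $pBp$ with the same obstruction (\cite[Lemma 3.4]{EVGI23}).

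Your unital route via Theorem~\ref{thm:classificationKirchberg}(2) and Corollary~\ref{cor:unital-existence} also works. However, ``tracks the unit class'' hides a step you should make explicit. Because $G$ is finite, $e$ has a $G$-fixed preimage at some finite stage. The fixed points of $\bZ[G]\otimes L$ are exactly the elements $(\sum_g g)\otimes l$. Since $[1_{D_G}]=\sum_g g$, such an element is the unit class of $D_G\otimes C$ whenever $[1_C]=l$. So, after reindexing, you choose unital blocks with these unit classes, and the equivariant connecting maps automatically preserve them.

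The corner trick is shorter because it reuses (i) wholesale. Your approach produces the unital model directly with unital connecting maps, at the cost of this bookkeeping.
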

\begin{proof}
The uniqueness in both cases follows from Corollary \ref{cor:Gkerclass}. We turn to the existence.
As $M_0$ and $M_1$ are completely cohomologically trivial $G$-modules, it follows from Theorem \ref{theorem:inducedmodules} that there exists a sequential inductive system of $G$-modules
\[
\phi_i^{(n)}:\bZ[G]\otimes M_i^{(n)}\rightarrow \bZ[G]\otimes M_i^{(n+1)},
\]
with $M_i^{(n)}$ countable abelian groups with trivial $G$-module structures, such that $M_i=\lim \{ \bZ[G]\otimes M_i^{(n)}, \phi_i^{(n)} \}$ for $i=0,1$.
For each $n\in\mathbb N$, let $A^{(n)}$ be the stable Kirchberg algebra in the UCT class such that $K_i(A^{(n)})=M_i^{(n)}$ for $i=0,1$.
By Lemma \ref{lem:Gkerconstruction}, there exist $\lambda$-anomalous actions with the Rokhlin property $(\alpha^{(n)},\fu^{(n)})$ on $B^{(n)}=D_G\otimes A^{(n)}$ inducing the module structure $\bZ[G]\otimes M_i^{(n)}$ in K-theory (namely the model action in the lemma tensored with the trivial action on $A^{(n)}$).
By Lemma \ref{lem:equivariantsplitting}, it is possible to extend the equivariant maps $\phi_i^{(n)}$ to equivariant maps $\Phi^{(n)}:\underline{K}(B^{(n)})\rightarrow \underline{K}(B^{(n+1)})$.
Theorem \ref{thm:classificationKirchberg}\ref{item:classKirchbergstable} allows us to lift $\Phi^{(n)}$ to a $*$-homomorphism $\varphi^{(n)}:B^{(n)}\rightarrow B^{(n+1)}$ such that $\underline{K}(\varphi^{(n)})=\Phi^{(n)}$ and so $\varphi^{(n)}\alpha^{(n)}_g\approx_{\mathrm{pu}}\alpha_g^{(n+1)}\varphi^{(n)}$.
It follows from Theorem \ref{thm: existence} that there exist $G$-coherent morphisms $\psi^{(n)}: (B^{(n)},\alpha^{(n)},\fu^{(n)}) \rightarrow (B^{(n+1)},\alpha^{(n+1)},\fu^{(n+1)})$ such that $\psi_1^{(n)}\approx_{\mathrm{pu}}\varphi^{(n)}$.
In particular $K_i(\psi_1^{(n)})=K_i(\varphi^{(n)})=\phi_i^{(n)}$ for $i=0,1$.
The inductive limit of anomalous $G$-actions 
\[
(A,\alpha,\fu)=\lim_{\longrightarrow} \{ (B^{(n)},\alpha^{(n)},\fu^{(n)}), \psi^{(n)}\}
\]
yields a $\lambda$-anomalous action on a stable Kirchberg algebra $A$ as in \ref{item:onethmrange}. 

To show \ref{item:twothmrange}, first we apply the first part and choose a stable Kirchberg algebra $B$ in the UCT class and $\alpha:G\rightarrow \Out(B)$ the Rokhlin $G$-kernel with $K_0(B)\cong M_0$ and $K_1(B)\cong M_1$ as $G$-modules and such that the lifting obstruction of $\alpha$ is $[\lambda]\in H^3(G,\bT)$.
As $B$ is a stable Kirchberg there is a projection $p\in B$ such that $[p]=e$ under the identification of $K_0(B)$ and $M_0$.
Let $\widetilde{\alpha}: G\to\Aut(B)$ be a lift for $\alpha$.
As $e$ is assumed to be a fixed point, the projection $\widetilde{\alpha}_g(p)$ has the same $K_0$-class as $p$ for every $g\in G$, so these projections are unitarily equivalent.
This implies that $\alpha$ naturally induces a $G$-kernel on $pAp$ with lifting obstruction $\lambda$ (see \cite[Lemma 3.4]{EVGI23}).
Namely, if we choose unitaries $v_g\in\cU\cM(B)$ with $p  = v_g\widetilde{\alpha}_g(p)v_g^*$ for each $g\in G$, then the map $G\to\Aut(pAp)$ given by $g\mapsto \Ad(v_g)\widetilde{\alpha}_g|_{pAp}$ defines a $G$-kernel on the corner $pAp$ with the desired properties. 
\end{proof}

\bibliographystyle{abbrv}
\bibliography{refs}
\end{document}